\numberwithin{equation}{section}
\newtheorem{main}{Main Theorem}
\newtheorem{theorem}{Theorem}[subsection]
\newtheorem{definition}[theorem]{Definition}
\newtheorem{proposition}[theorem]{Proposition}
\newtheorem*{proposition*}{Proposition}
\newtheorem*{corollary*}{Corollary}
\newtheorem{lemma}[theorem]{Lemma}
\newtheorem*{conjecture*}{Conjecture}
\theoremstyle{definition}
\newtheorem{claim}[theorem]{Claim}
\theoremstyle{remark}
\newtheorem{remark}[theorem]{Remark}
\newtheorem*{remark*}{Remark}
\newtheorem*{example*}{Example}
\newcommand{\defeq}{\coloneqq}
\newcommand{\C}{\mathbf{C}}
\newcommand{\Z}{\mathbf{Z}}
\DeclareMathOperator{\Ker}{Ker}
\DeclareMathOperator{\ad}{ad}
\DeclareMathOperator{\Ad}{Ad}
\newcommand{\Slice}{S}
\newcommand{\Kaz}{\mathrm{K}}
\newcommand{\Sym}{\mathcal{S}}
\newcommand{\Poi}{\mathcal{P}}
\newcommand{\Univ}{\mathcal{U}}
\newcommand{\V}{\mathcal{V}}
\newcommand{\W}{\mathcal{W}}
\DeclareMathOperator{\Aut}{Aut}
\DeclareMathOperator{\gr}{gr}
\newcommand{\Mat}{\mathrm{Mat}}
\DeclareMathOperator{\Span}{Span}
\renewcommand{\sl}{\mathfrak{sl}}
\renewcommand{\sp}{\mathfrak{sp}}
\newcommand{\so}{\mathfrak{so}}
\newcommand{\g}{\mathfrak{g}}
\newcommand{\h}{\mathfrak{h}}
\newcommand{\m}{\mathfrak{m}}
\newcommand{\p}{\mathfrak{p}}
\newcommand{\Alt}{\mathsf{\Lambda}}
\newcommand{\coH}{\mathsf{H}}
\newcommand{\module}[1]{{#1}\text{-}\mathsf{mod}}
\newcommand{\whcat}{\mathsf{W}}
\DeclareMathOperator{\Ind}{Ind}
\DeclareMathOperator{\Whi}{Wh}
\begin{document}

\title[]{Reduction by stages for finite W-algebras}


\author[1]{\fnm{Naoki} \sur{Genra}}\email{genra@ms.u-tokyo.ac.jp}

\author*[2]{\fnm{Thibault} \sur{Juillard}}\email{thibault.juillard@universite-paris-saclay.fr}

\affil[1]{\orgdiv{Graduate School of Mathematical Sciences}, \orgname{University of Tokyo 3-8-1 Komaba}, \orgaddress{ \city{Tokyo}, \postcode{153-8914}, \state{Meguro}, \country{Japan}}}

\affil*[2]{\orgdiv{Institut de mathématiques d’Orsay}, \orgname{Université Paris-Saclay, CNRS}, \orgaddress{\street{Rue Magat}, \city{Orsay}, \postcode{91405}, \country{France}}}


\abstract{Let $\mathfrak{g}$ be a simple Lie algebra: its dual space $\mathfrak{g}^*$ is a Poisson variety. It is well known that for each nilpotent element $f$ in $\mathfrak{g}$, it is possible to construct a new Poisson structure by Hamiltonian reduction which is isomorphic to some subvariety of $\mathfrak{g}^*$, the Slodowy slice $S_f$. Given two nilpotent elements $f_1$ and $f_2$ with some compatibility assumptions, we prove Hamiltonian reduction by stages: the slice $S_{f_2}$ is the Hamiltonian reduction of the slice $S_{f_1}$. We also state an analogous result in the setting of finite W-algebras, which are quantizations of Slodowy slices.  These results were conjectured by Morgan in his PhD thesis. As corollary in type A, we prove that any hook-type W-algebra can be obtained as Hamiltonian reduction from any other hook-type one. As an application, we establish a generalization of the Skryabin equivalence. Finally, we make some conjectures in the context of affine W-algebras.}

\keywords{Slodowy slices, W-algebras, Hamiltonian reduction, Nilpotent orbits}



\maketitle

\section{Introduction}

\subsection{Background}

In this article, we investigate the problem of Hamiltonian reduction by stages for the Slodowy slices of a simple complex Lie algebra. In symplectic geometry, given an action of a Lie group on a symplectic manifold and a moment map, one can perform \textit{Hamiltonian reduction}, which consists in taking the quotient of the zero-fiber of the moment map by the action of the group. This quotient is then equipped with an induced symplectic structure. When the group contains a normal subgroup, this reduction can be performed in \textit{two stages}. One can first take the Hamiltonian reduction by this normal subgroup (\textit{first stage}). Then, one can consider the induced action of the quotient of the two groups on the reduced symplectic manifold and perform Hamiltonian reduction again (\textit{second stage}). This procedure is called \textit{reduction by stages}. Under good assumptions, the Hamiltonian reduction by the ambient group and the reduction by stages result in the same symplectic variety, up to a natural isomorphism \cite[Section 5.2]{marsden2007reduction}.

This theory is developed in a very general setting, for symplectic manifolds, by Marsden, Misiolek, Ortega, Perlmutter, and Ratiu in \cite{marsden2007reduction}. In the context of Slodowy slices, it was first studied by Morgan in his PhD thesis \cite{morgan2015phd, morgan2015quantum}. In a different setting, Kamnitzer, Pham and Weekes proved in \cite{kamnitzer2022hamiltonian} that there are Hamiltonian reduction relations between some generalized slices of the affine Grassmannian of a reductive algebraic group. These affine Grassmanian slices are related to Slodowy slices under the Mirkovi\'c-Vybornov isomorphism \cite{mirkovic2008quiver}.

Let us introduce the precise set-up for Slodowy slices. Let $G$ be a complex connected algebraic group and let $\g$ be its corresponding Lie algebra. We assume that $\g$ is simple and we denote by $\g^*$ the dual space, which is a Poisson variety. Let $f$ be a nilpotent element in $\g$; it can be embedded in an $\sl_2$-triple $(e, h, f)$. To this triple, one can associate a subvariety of $\g^*$, denoted by $\Slice_f$ and called a Slodowy slice. In particular, $\Slice_f$ is an affine space transversal to the coadjoint orbits. These orbits are the symplectic leaves of $\g^*$, hence one can deduce from the transversality that the Slodowy slice is also a Poisson variety. Gan and Ginzburg proved that $\Slice_f$ is isomorphic to the Hamiltonian reduction of $\g^*$ with respect to the Hamiltonian action of $M$, a unipotent subgroup of $G$ \cite{gan2002quantization}; see Section \ref{section:geometrical-reduction} for more details. Let $\mu : \g^* \rightarrow \m^*$ be the moment map, where $\m$ is the Lie algebra $M$. We denote by $\g^* /\!/ M$ the Hamiltonian reduction, which is by definition the quotient $\mu^{-1}(0)/M$. Gan and Ginzburg prove the isomorphism: $\Slice_f \cong \g^* /\!/ M$. 

Let $f_1, f_2 \in \g$ be two nilpotent elements such that the adjoint orbit of $f_1$ is contained in the Zariski closure of the orbit of $f_2$. Morgan conjectured that there exist a unipotent subgroup $M_0$ of $G$ and a Hamitonian $M_0$-action on $\Slice_{f_1}$ such that $\Slice_{f_2} \cong \Slice_{f_1}/\!/M_0$ \cite[Objective 3.6]{morgan2015quantum}. If we denote $M_i$ the unipotent group such that $\Slice_{f_i} \cong \g^* /\!/ M_i$ ($i=1,2$), one expects that $M_0 = M_2 / M_1$.  This can can be reformulated as the following isomorphism between one stage and two stages reductions:
$$ \g^*/\!/M_2 \cong (\g^*/\!/M_1)/\!/M_0. $$

The finite W-algebra $\Univ(\g, f)$ is an associative algebra constructed as the quantum Hamiltonian reduction of the universal enveloping algebra of $\g$, denoted by $\Univ(\g)$. Then $\Univ(\g, f)$ is equipped with the Kazhdan filtration and the associated graded algebra is a Poisson algebra which is isomorphic to the coordinate ring of the Slodowy slice $\Slice_{f}$, so $\Univ(\g, f)$ is a quantization of $\Slice_{f}$ \cite{gan2002quantization}; see Section \ref{section:quantum-reduction} for more details. Morgan conjectured that $\Univ(\g, f_2)$ is isomorphic to the quantum Hamiltonian reduction of $\Univ(\g, f_1)$ \cite[Objective 3.6]{morgan2015quantum}.

However, Morgan found out that stronger conditions should happen for the two nilpotent elements to construct reduction by stages: see the discussion after Objective~3.6 in \cite{morgan2015quantum}. In type A, for any pair $f_1, f_2$ of nilpotent elements whose orbits are adjacent for the closure ordering, he provides a general construction of a group $M_0$, with a Hamiltonian action on the Slodowy slice $S_{f_1}$, satisfying these conditions. He conjectured that the resulting Hamiltonian reduction $\Slice_{f_1}/\!/M_0$ is isomorphic to the Slodowy slice $S_{f_2}$, and he proved it for a particular case (subregular and regular orbits) and for more examples in small ranks: see Conjecture~3.13 and the development made after in \cite{morgan2015quantum}.

\subsection{Main results}

In the present paper, we prove reduction by stages under some technical assumptions on the pair of nilpotent elements, which are in fact a refinement of Morgan's conditions. We also provide examples of nilpotent elements for which Hamiltonian reduction by stages holds. As application, we prove a generalization of the Skryabin equivalence. 

Let $f_1, f_2$ be nilpotent elements in $\g$. For $i=1,2$, set $\Gamma^i : \g = \bigoplus_{j\in\Z}\g^{(i)}_j$ a $\Z$-grading that is good for $f_i$, and let  $(e_i, h_i, f_i)$ be a $\sl_2$-triple in $\g$ such that $e_i \in \g^{(i)}_2$ and $h_i \in \g^{(i)}_0$ ($i=1,2$). The grading $\Gamma^i$ corresponds to the eigenspace decomposition of the adjoint action of some semisimple element $q_i$. Without loss of generality, we assume that $q_1$ and $q_2$ belong to a same Cartan subalgebra $\h$ of $\g$. Using these gradings, we can construct $M_i$ unipotent subgroup of $G$ such that $\Slice_{f_i} \cong \g^*/\!/M_i$, and we denote by $\m_i$ the Lie algebra of $M_i$.

\begin{main}[Theorem \ref{theorem:freeness}]\label{main:freeness}
	We make the following assumptions:
	\begin{enumerate}
		\item there is a direct sum decomposition $\m_2 = \m_1 \oplus \m_0$, where $\m_0$ is a $\h$-stable Lie subalgebra of $\m_2$ and $\m_1$ is an ideal of $\m_2$,
		
		\item the nilpotent element $f_0 \defeq f_2 - f_1$ and the Lie algebra $\m_0$ are contained in $\g^{(1)}_0$,
		
		\item the semisimple element $q_0 \defeq q_2 - q_1$ commutes with $f_1$.
	\end{enumerate} 
	Then there exists a Hamiltonian $M_0$-action on $\Slice_{f_1} \cong \g^*/\!/M_1$, with moment map $\mu_0$, such that the action is free, in the sense of the following $M_0$-isomorphism:
	$$ M_0 \times \Slice_{f_2} \cong {\mu_0}^{-1}(0). $$
\end{main}

This first theorem is an analogue of \cite[Lemma 2.1]{gan2002quantization} (see Theorem \ref{theorem:slice-iso}). It has many useful consequences, the first being the reduction by stages.

\begin{main}[Theorem \ref{theorem:geometrical-stage-reduction}]\label{main:geometrical-stage-reduction}
	Assume the hypotheses of Main Theorem \ref{main:freeness}. Then the Hamiltonian reduction of $\Slice_{f_1}$ by the $M_0$-action is isomorphic to $\Slice_{f_2}$. In other terms: $$ \g^*/\!/M_2 \cong (\g^*/\!/M_1)/\!/M_0 .$$
\end{main}

By Proposition \ref{proposition:nilpotent-order}, the assumptions of Main Theorem \ref{main:freeness} imply that the orbit of $f_1$ is contained in the Zariski closure of the orbit of $f_2$, that is to say the assumption made by Morgan in his conjecture.

Under the assumptions of Main Theorem \ref{main:geometrical-stage-reduction}, we will prove that there exists a Lie algebra embedding $\m_0 \hookrightarrow \Univ(\g, f_1)$. Then the quantum Hamiltonian reduction of $\Univ(\g, f_1)$ is well-defined.

\begin{main}[Theorem \ref{theorem:quantum-stage-reduction}] \label{main:quantum-stage-reduction}
	Assume the hypotheses of Main Theorem \ref{main:freeness}. Then $\Univ(\g, f_2)$ is isomorphic to the quantum Hamiltonian reduction of $\Univ(g, f_1)$. 
\end{main}

Recall that nilpotent orbits in $\sl_n$ are classified by partitions of $n$. \textit{Hook-type} nilpotent elements in $\sl_n$ are nilpotent elements that belong to nilpotent orbits corresponding to a partition $(\ell, 1^{n - \ell})$, for some $1 \leqslant \ell \leqslant n$.

\begin{corollary*}[Proposition \ref{proposition:hook}]\label{corollary:hook-type-reduction}
	Let $f_1, f_2$ be hook-type nilpotent elements in $\g=\sl_n$ such that the orbit of $f_i$ corresponds to the partition $(\ell_i, 1^{n - \ell_i})$ for $i=1, 2$ with $1\leq \ell_1< \ell_2 \leq n$.
	\begin{enumerate}
		\item There exist a unipotent subgroup $M_0$ in $G$ and a Hamiltonian $M_0$-action on $\Slice_{f_1}$ such that $\Slice_{f_2} \cong \Slice_{f_1}/\!/M_0$.
		\item The W-algebra $\Univ(\sl_n, f_2)$ is isomorphic to the quantum Hamiltonian reduction of $\Univ(\sl_n, f_1)$.
	\end{enumerate}
\end{corollary*}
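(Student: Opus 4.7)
My plan is to derive both parts from Main Theorems~\ref{main:geometrical-stage-reduction} and~\ref{main:quantum-stage-reduction} by producing explicit hook-type representatives $f_1, f_2$ and good gradings $\Gamma^1, \Gamma^2$ for which the three hypotheses of Main Theorem~\ref{main:freeness} are visibly satisfied. Writing $E_{a,b}$ for the standard matrix units of $\mathfrak{gl}_n$, I would take $f_i = \sum_{j=1}^{\ell_i - 1} E_{j+1,j}$, which makes $f_0 = f_2 - f_1 = \sum_{j=\ell_1}^{\ell_2 - 1} E_{j+1,j}$ supported on the index set $\{\ell_1, \ldots, \ell_2\}$. For the grading element $q_i$ I would take the diagonal matrix whose first $\ell_i$ entries are the Dynkin weights $\ell_i - 1, \ell_i - 3, \ldots, -(\ell_i - 1)$ and whose remaining $n - \ell_i$ entries are all equal to $-(\ell_i - 1)$. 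Geometrically this is the good grading attached to the pyramid of $(\ell_i, 1^{n - \ell_i})$ in which every singleton box is stacked below the rightmost column of the Jordan row; I would confirm its goodness either from the Elashvili--Kac classification or by directly checking the Brundan--Goodwin criterion $\g^{f_i} \subset \bigoplus_{j \leq 0} \g^{(i)}_j$.

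A routine weight computation then shows that the grading is even and that
\[ \m_i = \bigoplus_{j \leq -2} \g^{(i)}_j = \Span\bigl\{ E_{a,b} : b < a \leq \ell_i \text{ or } a > \ell_i,\ b < \ell_i \bigr\}. \]
From this I would read off the inclusion $\m_1 \subset \m_2$, together with the Cartan-stable vector-space complement
\[ \m_0 = \Span \bigl\{E_{a,b} : a > \ell_1,\ \ell_1 \leq b < \min(a, \ell_2)\bigr\}. \]
A short case analysis using $[E_{a,b}, E_{c,d}] = \delta_{b,c} E_{a,d} - \delta_{a,d} E_{c,b}$ shows that $\m_1$ is an ideal of $\m_2$ and that $\m_0$ is closed under the bracket, giving hypothesis~(1). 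Every basis vector of $\m_0$, as well as $f_0$, involves only matrix units $E_{a,b}$ with $a, b \in \{\ell_1, \ldots, n\}$, a range on which $q_1$ is the constant $-(\ell_1 - 1)$, so they lie in $\g^{(1)}_0$, giving hypothesis~(2). Hypothesis~(3) is immediate because $q_0 = q_2 - q_1$ takes the constant value $\ell_2 - \ell_1$ on the first $\ell_1$ diagonal entries, which is the support of $f_1$.

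With the three hypotheses in place, part~(1) is the conclusion of Main Theorem~\ref{main:geometrical-stage-reduction} and part~(2) the conclusion of Main Theorem~\ref{main:quantum-stage-reduction}. The main obstacle I anticipate is choosing the correct good grading: the naive Dynkin grading would assign weight~$0$ to the singleton positions and weight $-(\ell_1 - 1)$ to the bottom of the Jordan block, so that $E_{\ell_1 + 1, \ell_1}$, and hence $f_0$, would not lie in $\g^{(1)}_0$. The right-adjusted pyramid above is the minimal modification aligning the singleton weights with that of the bottom of the Jordan block, and once it is committed to, the remaining verifications reduce to routine matrix-unit combinatorics.
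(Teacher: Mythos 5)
Your overall route is the paper's: the paper also takes the representatives $f_i=e_{2,1}+\cdots+e_{\ell_i,\ell_i-1}$ and the even good grading coming from the hook pyramid in which the $n-\ell_i$ singleton boxes sit in the column of the lowest Dynkin weight $-(\ell_i-1)$ (its Figure of the left-aligned pyramid is exactly your $q_i$, up to the harmless scalar shift needed to make $q_i$ traceless, hence an element of $\sl_n$), and its proof of Proposition~\ref{proposition:hook} is precisely ``check the three hypotheses by matrix-unit combinatorics and apply Theorems~\ref{theorem:geometrical-stage-reduction} and~\ref{theorem:quantum-stage-reduction}''. Your observation that the Dynkin grading would fail hypothesis (2) and that the aligned pyramid is the right choice is also the key point, and it is correct.

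However, there is one concrete error: you take $\m_i=\bigoplus_{j\leqslant-2}\g^{(i)}_j$, the lower-triangular side, whereas with the paper's conventions ($f_i\in\g^{(i)}_{-2}$, $\chi_i=\Phi(f_i)$) the good algebra is $\m_i=\bigoplus_{j\geqslant 2}\g^{(i)}_j$, i.e.\ $\m^{(\ell_i)}=\Span\{E_{a,b}: a\leqslant \ell_i-1,\ a<b\leqslant n\}$, the upper-triangular side. This is not a cosmetic convention issue: the invariant form pairs $\g^{(i)}_{-2}$ with $\g^{(i)}_{2}$, so on your $\m_i$ the character $\chi_i$ vanishes identically, ${\mu_i}^{-1}(0)$ is the linear subspace $\m_i^{\perp}$ (on which $M_i$ does not act freely), Theorem~\ref{theorem:slice-iso} fails, and $\g^*/\!/M_i$ is not $\Slice_{f_i}$; in particular your $\m_i$ are not good algebras in the sense of Section~\ref{sec:Slodowy}, so Main Theorems~\ref{main:freeness}--\ref{main:quantum-stage-reduction} cannot be invoked as you state them. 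The repair is immediate: transpose everything, taking $\m_i=\m^{(\ell_i)}$ as above and $\m_0=\Span\{E_{a,b}:\ell_1\leqslant a\leqslant \ell_2-1,\ a<b\leqslant n\}$; your bracket computations, the inclusion $f_0,\m_0\subseteq\g^{(1)}_0$ (all indices involved are $\geqslant\ell_1$, where $q_1$ is constant), and the constancy of $q_0$ on the first $\ell_1$ entries go through verbatim, and this corrected verification is exactly the paper's proof.
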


In addition, we have examples of reduction by stages which are not hook-type in $\sl_n$ and we also have several pairs of nilpotent elements in other types (classical and exceptional) that satisfy the assumptions of Main Theorem \ref{main:geometrical-stage-reduction}; see Section \ref{section:hook}.

As applications of Main Theorem \ref{main:quantum-stage-reduction}, we have the Skryabin equivalence by stages. Let $\whcat(\g, f_i, \m_i)$ be the category of finitely generated Whittaker $\g$-modules with respect to to $f_i$ and $\m_i$, and let $\module{\Univ(\g, f_i)}$ be the category of finitely generated $\Univ(\g, f_i)$-modules ($i=1,2$). The Skryabin equivalence says that $\whcat(\g, f_i, \m_i)$ and $\module{\Univ(\g, f_i)}$ are equivalent and the equivalence is given by an explicit functor $\Whi_i : \whcat(\g, f_i, \m_i)~\rightarrow \module{\Univ(\g, f_i)}$ \cite{skryabin2002equivalence}; see Section \ref{section:skryabin} for precise statements. Now, we suppose the assumptions of Main Theorem \ref{main:geometrical-stage-reduction}. Then there exists an embedding $\mathfrak{m}_0 \hookrightarrow \Univ(\g, f_1)$, and thus we can define a category $\whcat_0$ of finitely generated Whittaker $\Univ(\g, f_1)$-modules. 

\begin{main}[Theorem \ref{theorem:skryabin-stage}] \label{main:skryabin-stage}
	Assume the hypotheses of Main Theorem \ref{main:geometrical-stage-reduction}. Then we can construct a functor $\Whi_0 : \whcat_0 \rightarrow \module{\Univ(\g, f_2)}$ which is an equivalence of categories. Moreover, $\Whi_2 = \Whi_0 \circ \Whi_1$, where $\Whi_1$ is restricted to $\whcat(\g, f_2, \m_2) \subseteq \whcat(\g, f_1, \m_1)$.
\end{main}

Let us mention that, since the prepublication of these results, the study of the equivalence functor $\Whi_0$ has been continued by Masut in \cite{masut2024monoidal}. She proved that this equivalence is equivariant with respect to some bi-actions of a monoidal subcategory of $\module{\Univ(\g)}$ on the categories $\whcat_0$ and $\module{\Univ(\g, f_2)}$.

\subsection{Reduction by stages for affine W-algebras}

In this part, we give some conjectures and comments about reduction by stages in the setting of \textit{vertex algebras}. Roughly speaking, a vertex algebra $V$ is a non-commutative and non-associative differential algebra with some additional structures. See \cite{kac1998vertex, frenkel2004vertex} for an introduction to this subject. To any vertex algebra $V$, one can always associate a Poisson variety $X_V$, called the associated variety \cite{arakawa2012remark}.

Let $\V^k(\g)$ be the affine vertex algebra of a semisimple Lie algebra $\g$ at level $k \in \C$. It is a vertex algebra whose associated variety is $\g^*$. Then one can construct the \textit{affine W-algebra} $\W^k(\g, f)$ associated to $\g$ and a nilpotent element $f$, at level $k$. It is defined as the generalized Drinfeld-Sokolov reduction $\coH^0_f(\mathsf{L}\m, \V^k(\g))$, which is a cohomological analogue of Hamiltonian reductions for a loop algebra $\mathsf{L}\m \defeq \mathfrak{m}[t, t^{-1}]$, where $\m$ is the nilpotent Lie algebra introduced above. See \cite{feigin1992affine, kac2003quantum} for a rigorous construction of these affine W-algebras. The associated variety of $\W^k(\g, f)$ is the Hamiltonian reduction $\g^* /\!/M \cong S_f$ \cite{arakawa2015associated}. The following conjecture has been suggested in \cite[Theorem 1 and Appendix B]{MR97} for pairs of some hook-type nilpotent elements in $\g=\sl_n$.

\begin{conjecture*}
	Assume the hypotheses of Main Theorem \ref{main:freeness}. Then $\W^k(\g, f_2)$ is isomorphic to the Drinfeld-Sokolov reduction of $\W^k(\g, f_1)$: $$\W^k(\g, f_2) \cong \coH^0_{f_0}(\mathsf{L}\m_0, \W^k(\g, f_1)). $$ 
\end{conjecture*}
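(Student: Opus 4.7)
The plan is to adapt the proof strategy of Main Theorem~\ref{main:quantum-stage-reduction} from finite W-algebras to their affine counterparts, by exhibiting the generalized Drinfeld--Sokolov BRST complex for $f_2$ as a two-stage nested BRST complex. Let $(C_2^\bullet, d_2)$ be the BRST complex whose cohomology computes $\coH^\bullet_{f_2}(\mathsf{L}\m_2, \V^k(\g))$; explicitly $C_2^\bullet = \V^k(\g) \otimes F(\mathsf{L}\m_2)$, where $F(\mathsf{L}\m_2)$ denotes the charged fermion ghost vertex superalgebra attached to $\mathsf{L}\m_2$. The vector space splitting $\m_2 = \m_1 \oplus \m_0$ provided by hypothesis (1) of Main Theorem~\ref{main:freeness} lifts to a loop-algebra splitting $\mathsf{L}\m_2 = \mathsf{L}\m_1 \oplus \mathsf{L}\m_0$, which factorizes the ghost system as $F(\mathsf{L}\m_2) \cong F(\mathsf{L}\m_1) \otimes F(\mathsf{L}\m_0)$; consequently $C_2^\bullet \cong C_1^\bullet \otimes F(\mathsf{L}\m_0)$, with $C_1^\bullet = \V^k(\g) \otimes F(\mathsf{L}\m_1)$ the BRST complex for $\W^k(\g, f_1)$.

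Next, the total differential should decompose as $d_2 = d_1 + d_0$, where $d_1$ is the Drinfeld--Sokolov differential for $(\mathsf{L}\m_1, f_1)$ and $d_0$ is that for $(\mathsf{L}\m_0, f_0)$ with $f_0 \defeq f_2 - f_1$. The cleanness of this splitting rests on the three hypotheses of Main Theorem~\ref{main:freeness}: the ideal condition on $\m_1$ makes the Koszul cross-bracket terms of $d_2$ land inside $F(\mathsf{L}\m_1)$; the containment $f_0, \m_0 \subseteq \g^{(1)}_0$ ensures that the character $\chi_0 = (f_0, -)$ descends to a well-defined contribution to $d_0$ after reduction by $\m_1$; and the commutation $[q_0, f_1] = 0$ guarantees that $\Gamma^0$ is still a good grading for $f_0$ acting on the $\m_1$-reduced theory. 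As part of this step one must construct a vertex algebra morphism $\V^{k_0}(\m_0) \to \W^k(\g, f_1)$ at an appropriately shifted level $k_0$, so that the right-hand side $\coH^0_{f_0}(\mathsf{L}\m_0, \W^k(\g, f_1))$ is well-defined; this is the affine analogue of the embedding $\m_0 \hookrightarrow \Univ(\g, f_1)$ that underlies the finite case.

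With these ingredients in place, I would run the spectral sequence of the filtration of $C_2^\bullet$ by $\m_0$-ghost degree. The $E_0$ differential is $d_1$, whose cohomology is $\coH^\bullet_{f_1}(\mathsf{L}\m_1, \V^k(\g)) \otimes F(\mathsf{L}\m_0)$; the Feigin--Frenkel/Kac--Roan--Wakimoto vanishing theorem forces concentration in ghost degree zero, yielding $E_1 = \W^k(\g, f_1) \otimes F(\mathsf{L}\m_0)$. The induced $E_1$ differential then identifies with $d_0$, so that $E_2 = \coH^\bullet_{f_0}(\mathsf{L}\m_0, \W^k(\g, f_1))$. Convergence on the Kazhdan-filtered pieces together with collapse at $E_2$ would then give the desired identification in degree zero.

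The hard part will be to establish the two separate vanishing statements that feed the spectral sequence. The first, $\coH^i_{f_1}(\mathsf{L}\m_1, \V^k(\g)) = 0$ for $i \neq 0$, is in principle standard but requires a version of Feigin--Frenkel adapted to the possibly non-Dynkin good grading $\Gamma^1$. The second, $\coH^i_{f_0}(\mathsf{L}\m_0, \W^k(\g, f_1)) = 0$ for $i \neq 0$, is more delicate because the input vertex algebra is itself a BRST cohomology rather than an affine vertex algebra. A promising route, mirroring Arakawa's associated-variety approach \cite{arakawa2015associated}, is to pass to the associated graded Poisson vertex algebras with respect to the Kazhdan filtration: the classical limit reduces to the Poisson reduction by stages proved in Main Theorem~\ref{main:geometrical-stage-reduction}, and the freeness of the classical $M_0$-action from Main Theorem~\ref{main:freeness} should translate into the required cohomology vanishing at the Poisson vertex level, from which one lifts to the quantum case by a standard Kazhdan-filtration argument.
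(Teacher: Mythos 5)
The first thing to say is that the paper does not prove this statement: it is stated as a conjecture, and the authors say explicitly that ``such a result has not been proved yet,'' offering only indirect evidence (the case of certain hook-type pairs in type A suggested in the literature, and the existence of inverse Hamiltonian reductions). So there is no proof in the paper to compare against, and what you have written cannot be judged as a variant of an existing argument; it can only be judged as a proof on its own terms, and on those terms it is a program rather than a proof.

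The outline you give (factorize the ghost system via $\m_2 = \m_1 \oplus \m_0$, split the differential as $d_2 = d_1 + d_0$, run the spectral sequence of the $\m_0$-ghost filtration, use two vanishing theorems) is indeed the natural affine analogue of the strategy behind Main Theorem \ref{main:quantum-stage-reduction}, but every load-bearing step is asserted rather than established. Concretely: (i) the splitting $d_2 = d_1 + d_0$ with $d_1$ squaring to zero and commuting appropriately with $d_0$ is not automatic --- the Chevalley--Eilenberg part of the BRST differential produces cross terms in which $\m_0$-ghosts act on $\m_1$-ghosts, and showing these are absorbed correctly uses more than the ideal condition in a formal way; (ii) identifying the induced differential on the $E_1$-page with a genuine Drinfeld--Sokolov differential for $\W^k(\g, f_1)$ presupposes the affine analogue of Lemma \ref{lemma:m0-embedding}, namely a vertex algebra morphism from an affine vertex algebra attached to $\m_0$ (with the correct ghost-corrected $2$-cocycle/level) into $\W^k(\g, f_1)$; this is a construction that in the known inverse-reduction examples requires explicit work, and you only note that ``one must construct'' it; (iii) the convergence of the spectral sequence is a genuine issue in the affine setting, since the complex is infinite-dimensional in each cohomological degree and the ghost and Kazhdan/Li filtrations are unbounded, so ``convergence on the Kazhdan-filtered pieces together with collapse at $E_2$'' is exactly what must be proved, not a routine check; and (iv) for the second vanishing, the classical limit of $\W^k(\g, f_1)$ is the arc space of $\Slice_{f_1}$, not $\Slice_{f_1}$ itself, so the freeness statement of Main Theorem \ref{main:freeness} must first be promoted to the arc-space (Poisson vertex) level before Main Theorem \ref{main:geometrical-stage-reduction} can feed a vanishing argument, and none of that lifting is carried out. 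You flag (iv) yourself as ``the hard part,'' which is an accurate self-assessment: what you have is a plausible and reasonable attack plan for an open conjecture, not a proof of the statement.
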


Even if such a result has not be proved yet, some evidences appeared recently that the reduction by stages on the W-algebras can be inverted in the following sense. We call \textit{inverse (quantum) Hamiltonian reduction} an embedding of the form $$\W^k(\g, f_1) \hookrightarrow \W^k(\g, f_2) \otimes_{\C} V, $$ for some vertex algebra $V$, under the assumptions of Main Theorem \ref{main:freeness}. Such embeddings have already been constructed very recently by Fehily in type A, for hook type nilpotent elements \cite{fehily2023hook}; and by Fasquel and Nakatsuka in type B, for subregular and regular nilpotent elements \cite{fasquel2023orthosymplectic}. See also \cite{ACG21} for other examples.

Set $f_\mathrm{reg}$ a regular nilpotent element and $f_\mathrm{sub}$ a subregular nilpotent element in $\sl_n$. The previous conjecture claims that $\W^k(\mathfrak{sl}_n, f_\mathrm{reg}) \cong \coH^0_{f_0}(\mathsf{L}\mathfrak{m}_0, \W^k(\sl_n, f_\mathrm{sub}))$. If the conjecture is true, given a $\W^k(\sl_n, f_\mathrm{sub})$-module $V_1$, $\coH^0_{f_0}(\mathsf{L}\mathfrak{m}_0, V_1)$ becomes a $\W^k(\mathfrak{sl}_n, f_\mathrm{reg})$-module. On the other hand, it is known by \cite{adamovic2019realizations, AKR21, fehily2022subregular} that there exists a vertex algebra embedding $$\psi \colon \W^k(\sl_n, f_\mathrm{sub}) \hookrightarrow \W^k(\mathfrak{sl}_n, f_\mathrm{reg}) \otimes_{\C} \Pi,$$ where $\Pi$ denotes a half lattice vertex algebra. Then, for any $\W^k(\mathfrak{sl}_n, f_\mathrm{reg})$-module $V_2$, the tensor product $V_2\otimes_{\C}\Pi$ has a structure of $\W^k(\sl_n, f_\mathrm{sub})$-module through $\psi$. This procedure is called the \textit{inverse Hamiltonian reduction}. Moreover, $\psi$ factors to the \textit{simple quotients} of these W-algebras for all $k \in \C$ such that $i(k+n-1) \notin \Z_{\geqslant1}$ with all $i\in\{1, \ldots, n-1\}$. Consequently, the inverse Hamiltonian reduction plays an important role to study representation theory of the W-algebras. 

\subsection{A conjecture of Arakawa, van Ekeren and Moreau}

Let us present some possible applications of reduction by stages for simple affine W-algebras. Set $\g = \sl_n$, a simple Lie algebra of type A. We assume that the complex number $k$ is \textit{admissible}, of the form:
$$ k = - n + \frac{n}{q}, \quad q \in \Z_{\geqslant 1}, \quad q, n \ \text{coprime}. $$ 
Denote by $\W_k(\g, f)$ the simple quotient of $\W^k(\g, f)$. Let $f$ be a nilpotent element corresponding to a partition of $n$ of the shape $(q^m, \mathsf{\lambda}')$, where $\mathsf{\lambda}'$ is a partition of $s \defeq n - qm$, and $f'$ be a nilpotent element in $\sl_s$ corresponding to the partition $\mathsf{\lambda}'$. Arakawa, van Ekeren and Moreau have conjectured the following isomorphism in \cite{arakawa2022singularities}:
$$ \W_{-n + n/q}(\sl_n, f) \cong \W_{-s + s/q}(\sl_s, f'). $$

The associated variety of the left-hand side W-algebra is given by the nilpotent Slodowy slice $\overline{O_q} \cap S_{f}$, where $ \overline{O_q}$ denotes the closure of the nilpotent orbit corresponding to the partition $(q^d, 1^r)$ and $n = d q + r$ is the euclidean division of $n$ by $q$. The associated variety of the right-hand side W-algebra is given by the nilpotent Slodowy slice $\overline{O'_q} \cap S_{f'}$, where $ \overline{O'_q}$ denotes the closure of the nilpotent orbit corresponding to the partition $(q^{d'}, 1^{r'})$ and $s = d' q + r'$ is the euclidean division of $s$ by $q$. These two varieties are known to be smoothly equivalent by the work of Kraft and Procesi \cite{kraft1981minimal}. Actually, these nilpotent Slodowy slices are isomorphic quiver varieties \cite{maffei2005quiver, henderson2015singularities}. In a future work, we will provide an interpretation of this isomorphism in term of Drinfeld-Sokolov reduction, in order to study the previous conjecture.

\subsection{Notations} Throughout this paper the ground field is $\C$, the complex number field. The ring of integers is denoted by $\Z$. For any complex affine variety $X$, its ring of functions is denoted by $\C[X]$.

\subsection{Acknowledgements} T.J. is grateful to his advisor Anne Moreau, who introduced him to this interesting problem and helped the authors to improve this paper. Both authors are grateful to Tomoyuki Arakawa and Anne Moreau for the interseting discussions they had about this project and the advice they received. They also want to thank Dylan Butson, Thomas Creutzig, Zachary Fehily, Ivan Losev, David Ridout, Lewis Topley and Daniele Valeri for useful discussions, advice and comments.

They are grateful to Elisabetta Masut for noticing a typo in the type B example. They are also very grateful to the anonymous referee who made useful comments and suggestions to improve this paper.

They are grateful to the organizers of the programs ``Thematic Program on Vertex and Chiral Algebras'' (IMPA, Rio de Janeiro, spring 2022), ``Vertex Algebras and Representation Theory'' (CIRM, Luminy, June 2022) and ``Quantum symmetries: Tensor categories, Topological quantum field theories, Vertex algebras'' (CRM, Montréal, automn 2022), during which they made significant progress on this project. 

N.G. is supported by World Premier International Research Center Initiative (WPI), MEXT, Japan and JSPS KAKENHI Grant Number JP21K20317. T.J. is supported by a grant from IMPA and a public grant as part of the Investissement d'avenir project, reference ANR-11-LABX-0056-LMH, LabEx LMH.

\setcounter{tocdepth}{1}
\tableofcontents
\setcounter{tocdepth}{2}

\section{Hamiltonian reduction for Slodowy slices}
\label{section:geometrical-reduction}

In this section, we first recall the construction of Slodowy slice as Hamiltonian reduction of the dual space of a simple Lie algebra. Most of the results were proved by Gan and Ginzburg \cite{gan2002quantization} in the case of the Dynkin grading and over the complex field $\C$. Their construction is a reinterpretation and a continuation of Premets's results, who constructed the Poisson structure on Slodowy slices as induced by the bracket of the corresponding finite W-algebra: for more details, see \cite[Proposition 6.5]{premet2002slices} and Subsection \ref{section:quantum-reduction} of the present paper. These results generalized the ones obtained by Kostant for a regular nilpotent element \cite{kostant1978whittaker}. Brundan and Goodwin noticed that this construction works in the same way if one picks any good grading instead of the Dynkin one and they proved that the choice of two different gradings leads at the end to the same Poisson structure \cite{brundan2007good}. We also state and prove Main Theorems~\ref{main:freeness} and \ref{main:geometrical-stage-reduction}, which give sufficient conditions to have reduction by stages for Slodowy slices.

\subsection{Construction of Slodowy slices} \label{sec:Slodowy}
Let $\g$ be a finite dimensional simple Lie algebra over $\C$ and $G$ a connected algebraic group whose Lie algebra is $\g$. The coordinate ring of the dual space $\g^*$ is $\C[\g^*] = \Sym(\g)$, the symmetric algebra of $\g$. There is a unique way to extend the Lie bracket on $\g$ to a Poisson bracket $\{\cdot, \cdot\}$ on $\Sym(\g)$, which is called the Kirillov-Kostant Poisson structure on $\g^*$. Notice that $\g$ can be equipped with a symmetric invariant nondegenerate bilinear form $(\cdot\mid\cdot)$ and then can be identified with $\g^*$ by a $G$-isomorphism
\begin{align*}
	\Phi \colon \g \overset{\sim}{\longrightarrow} \g^*,\quad
	x \longmapsto (x\mid\cdot).
\end{align*}
Let $\ad$ and $\Ad$ be the adjoint actions of $\g$ and $G$ on $\g$, respectively. The dual space $\g^*$ is equipped with the coadjoint actions $\ad^*$ and $\Ad^*$. Let $f$ be a nilpotent element of $\g$ and $\Gamma$ be a \textit{good grading} on $\g$ for $f$. This is a $\Z$-grading
\begin{align*}
	\Gamma: \g = \bigoplus_{j\in\Z}\g_j
\end{align*}
satisfying the following conditions: $[\g_i, \g_j] \subset \g_{i+j}$; $f \in \g_{-2}$; and $\ad(f) \colon \g_j \rightarrow \g_{j-2}$ is injective for $j\geqslant1$ and surjective for $j\leqslant1$. See \cite{elashvili2005grading, brundan2007good} for the classification of good gradings. Since $\g$ is semisimple, there exists a semisimple element $q_{\Gamma}$ in $\g$ such that for all $i \in \Z$, 
$$ \g_i = \g(q_{\Gamma}, i) \defeq \{x \in \g ~  | ~  [q_{\Gamma}, x] = i x\}.$$  

According to \cite[Lemma 1.1]{elashvili2005grading}, which is a refinement of the Jacobson-Morosov Theorem, $f$ can be embedded in an $\sl_2$-triple $(e, h, f)$ such that $ h \in\g_{0}$ and $e \in \g_{2}$, called a $\Gamma$-\textit{triple}. Set $\chi \defeq \Phi(f)$ the linear form associated to $f$. Let us consider the \textit{Slodowy slice} to $f$:
$$ \Slice_{f} \defeq \Phi(f + \Ker \ad(e)) = \chi + \Ker \ad^*(e). $$
The subspace $\g_1$ is a symplectic vector space with respect to the skew-symmetric form $\chi([\cdot, \cdot])$. Choose a Lagrangian subspace $\mathfrak{l}$ of $\g_1$. With the data of $\Gamma$, $f$ and $\mathfrak{l}$, one can build the following Lie subalgebra of $\g$:
$$ \m \defeq \mathfrak{l} \oplus \bigoplus_{i \geqslant 2} \g_i. $$
We will refer to $\m$ as a \textit{good algebra}.

\begin{remark}
	A grading $\Gamma$ is said to be \textit{even} if $\g_{i} = \{0\}$ for $i \notin 2 \Z$. Many constructions become simpler with an even grading. In the case of an even grading, $\mathfrak{l} = \g_1 = \{0\}$ and the good algebra is only determined by $\Gamma$: $ \m \defeq \bigoplus_{i \geqslant 2} \g_i. $ In type A, one can always chose an even good grading for any nilpotent element. 
\end{remark}

The good algebra $\m$ is an $\ad$-nilpotent subalgebra, its dimension is $\frac{1}{2} \dim (\Ad(G) f)$ and $\chi$ restricts to a character of $\m$, that is to say $\chi([\m, \m]) = \{0\}$. There is a unique connected unipotent subgroup of $G$, let us denote it by $M$, such that the Lie algebra of $M$ is $\m$. The group $M$ acts on $\g^*$ by the co-adjoint action. Set the $\chi$-twisted restriction map:
$$ \mu :  \g^* \longrightarrow  \m^*, \quad \xi \longmapsto (\xi - \chi)|_{\m}. $$
Since $\chi$ is a character, $\mu$ is a \textit{moment map} for this action in the sense that: $\mu$ is $M$-equivariant and the co-map $\mu^* : \C[\m^*] \rightarrow \C[\g^*]$ is a Lie algebra homomorphism which makes the following Lie algebra homomorphism diagram commute.
\begin{equation} \begin{tikzcd}[column sep = small]
		{\C[\m^*]} \arrow[rr, "\mu^*"] \arrow[rd, "\text{action}"'] & & {\C[\g^*]} \arrow[ld, "{F \mapsto \{F, \cdot\}}"] \\
		& {\operatorname{Der}\C[\g^*]} &
	\end{tikzcd} \label{equation:hamilton} \end{equation}
The arrow ``action'' denotes the map induced by differentiating the action of $M$ on $\g^*$ and $\operatorname{Der}\C[\g^*]$ is the Lie algebra of derivations of the algebra $\C[\g^*]$.

It is easy to see that:
$$ \mu^{-1}(0) = \chi + \m^{\perp}, \quad \m^{\perp} \defeq \{\xi \in \g^* ~ | ~ \xi(\m) = \{0\}\}. $$
We have the inclusions $$\Ker \ad^*(e) \subseteq \Phi\left(\g_{\geqslant 0}\right) \subseteq \m^{\perp},$$ so $\Slice_f$ is contained in $\mu^{-1}(0)$. Since $\mu$ is $M$-equivariant, $\mu^{-1}(0)$ is $M$-stable and the following theorem states that it is in fact a free $M$-variety.

\begin{theorem}[\cite{gan2002quantization}] \label{theorem:slice-iso} 
	The map
	\begin{alignat*}{2}
		\phi : & \ & M \times \Slice_{f} & \longrightarrow \mu^{-1}(0)  \\
		& & (g, \xi) & \longmapsto \Ad^*(g) \xi
	\end{alignat*}
	is a well-defined isomorphism of $M$-varieties, where $M \times \Slice_{f}$ is equipped with the action given by left multiplication on $M$.
\end{theorem}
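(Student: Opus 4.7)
The plan is to follow the Gan--Ginzburg strategy: check the easy formal properties of $\phi$, establish that its differential at the base point $(1,\chi)$ is an isomorphism using $\sl_2$-theory, and then globalise through a contracting $\C^*$-action. Well-definedness is immediate since $\Slice_f \subseteq \mu^{-1}(0)$ is already noted in the excerpt and $\mu^{-1}(0)$ is $M$-stable by $M$-equivariance of $\mu$ (diagram \eqref{equation:hamilton}); equivariance of $\phi$ with respect to left multiplication on the first factor is clear from the formula. For dimensions, the good grading axioms give $\dim \m = \tfrac{1}{2}(\dim \g - \dim \g^f)$, while $\mu^{-1}(0) = \chi + \m^\perp$ has dimension $\dim \g - \dim \m$ and $\dim \Slice_f = \dim \Ker \ad^*(e) = \dim \g^e = \dim \g^f$, so a short computation gives $\dim(M \times \Slice_f) = \dim \mu^{-1}(0) = \tfrac{1}{2}(\dim \g + \dim \g^f)$.

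The core step is to show that the differential $d\phi_{(1,\chi)} \colon \m \oplus \Ker \ad^*(e) \to \m^\perp$, which sends $(X,v)$ to $\ad^*(X)\chi + v$, is an isomorphism. Its image genuinely lies in $\m^\perp$ because $\chi$ vanishes on $[\m,\m]$ (so $\ad^*(X)\chi \in \m^\perp$) and $\Ker \ad^*(e) \subseteq \m^\perp$ was already noted. For injectivity, applying $\Phi^{-1}$ sends $\ad^*(X)\chi$ into $[f,\g]$ (up to sign) and $v$ into $\g^e$, so the standard $\sl_2$-decomposition $\g = \g^e \oplus [f,\g]$ forces both summands to vanish; the good grading axioms then ensure that $\ad(f)$ is injective on $\m$, hence $X = 0$. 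Surjectivity follows from the dimension count of the previous paragraph.

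To promote this infinitesimal isomorphism to a global one I would invoke a Kazhdan $\C^*$-action: combine $q_\Gamma$ with the neutral element $h$ of the $\sl_2$-triple into a cocharacter $\gamma \colon \C^* \to G$ and set $\rho(t)\xi \defeq t^2 \Ad^*(\gamma(t))\xi$ on $\g^*$. With the correct weight calibration this $\C^*$-action fixes $\chi$, preserves $\mu^{-1}(0)$, and contracts $\Slice_f$ to $\chi$ as $t \to 0$; conjugation by $\gamma(t)$ preserves $M$ (since $\m$ is $q_\Gamma$-stable) and contracts it to the identity. The induced product action on $M \times \Slice_f$ has unique attracting fixed point $(1,\chi)$, and $\phi$ is $\C^*$-equivariant for the two actions. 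A standard lemma about $\C^*$-equivariant morphisms between smooth affine varieties with a common attracting fixed point, at which the differential is an isomorphism, then yields that $\phi$ itself is an isomorphism, and the $M$-equivariance persists throughout the argument.

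The main obstacle I anticipate is the calibration of the Kazhdan $\C^*$-action: one must combine $q_\Gamma$ and $h$ with carefully chosen weights so that, simultaneously, $\chi$ is fixed, $\Slice_f$ contracts to $\chi$, $\mu^{-1}(0)$ is preserved, and the action is compatible with $M$. A secondary subtlety in the non-even case is that $\m$ depends on the Lagrangian $\mathfrak{l} \subseteq \g_1$, so one should verify that $\chi$ restricts to a character of $\m$ (which is built into the definition of a good algebra) and, ideally, that the resulting isomorphism class is insensitive to the choice of $\mathfrak{l}$.
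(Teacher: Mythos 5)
Your proposal is correct and follows essentially the same route as the proof the paper relies on (it cites Gan--Ginzburg's Lemma 2.1 rather than reproving it): well-definedness and $M$-equivariance, a dimension count, bijectivity of the differential at $(1,\chi)$ via $\g=\g^e\oplus[f,\g]$ together with injectivity of $\ad(f)$ on $\m\subseteq\g_{\geqslant 1}$, and then globalization through a contracting Kazhdan $\C^{\times}$-action and the standard equivariant-morphism lemma. The calibration you flag is in fact immediate: one takes $\rho(t)\xi=t^{2+i}\xi$ on $\Phi(\g_i)$, i.e.\ $t^2\Ad^*(\gamma(t))$ with $\gamma$ the cocharacter integrating $q_{\Gamma}$ alone (no mixing with $h$ is needed; for the Dynkin grading $q_{\Gamma}=h$), which is exactly the action the paper uses in its later $\C^{\times}$-equivariance lemma.
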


The coordinate ring of $\mu^{-1}(0)$ is $ \C[\mu^{-1}(0)] = \Sym(\g) / I $, where $I$ is the ideal spanned by the linear polynomials $y - \chi(y)$, $y \in \m$. The adjoint action of $\m$ on $\Sym(\g) = \C[\g^*]$ descends to $\C[\mu^{-1}(0)]$ because $I$ is $\ad(\m)$-stable. 

The subspace of $\ad(\m)$-invariants, denoted by $\left(\Sym(\g) / I\right)^{\ad(\m)}$, is the set of all $X \bmod I \in \Sym(\g) / I $ such that for all  $y \in \m$, $ \{y - \chi(y), X\} \in I .$ It is an associative algebra and it is equipped with a Poisson bracket inherited from $\Sym(\g)$, defined by the following formula:
\begin{equation} \{X \bmod I, Y \bmod I\} \defeq \{X, Y\} \bmod I, \label{equation:reduced-poisson} \end{equation}
for $X \bmod I, Y \bmod I \in \left(\Sym(\g) / I\right)^{\ad(\m)}$. Let us denote by $$ \Poi(\g, f) \defeq \left(\Sym(\g)/I\right)^{\ad(\m)}$$  this Poisson algebra, which is called \textit{classical finite W-algebra} in \cite{de-sole2016structure}. 

Theorem \ref{theorem:slice-iso} implies that the restriction map $\C[\mu^{-1}(0)] \twoheadrightarrow \C[\Slice_{f}]$ (the comap of the inclusion $\Slice_f \hookrightarrow \mu^{-1}(0)$) induces an isomorphism $ \Poi(\g, f) \cong \C[\Slice_{f}]. $ Hence, through this isomorphism we endow $\Slice_{f}$ with a Poisson bracket inherited from $\g^*$. In fact, this is an example of \textit{Hamiltonian reduction}. The Slodowy slice $\Slice_{f}$ is isomorphic to the Hamiltonian reduction of $\g^*$ for the Hamiltonian action of $M$ and the moment map $\mu$: $ \Slice_f \cong  \mu^{-1}(0) / M. $

\begin{remark}
	In \cite{gan2002quantization}, the construction is in fact done with $\mathfrak{l}$ being a coisotropic subspace of $\g_1$, but it is slightly more subtle. In this case, $\chi$ does not restrict to a character and one has to perform Hamiltonian reduction with respect to a coadjoint orbit in $\m^*$. 
\end{remark}

\subsection{Statement of reduction by stages}

Let $f_i \in \g$, $i=1,2$, be two nilpotent elements and $\chi_i = \Phi(f_i)$ be the associated linear form. Let $\Gamma^i : \g = \bigoplus_{j \in \Z} \g^{(i)}_j$ be a good grading for $f_i$ and set $q_i \defeq q_{\Gamma^i}$ the corresponding semisimple element. Without loss of generality, assume that $q_1$ and $q_2$ belong to the same Cartan subalgebra $\h$ of $\g$. Let $\m_i$ be a good algebra associated to $f_i$, $M_i$ be the corresponding unipotent group and $\mu_i : \g ^* \rightarrow (\m_i)^*$ be the associated moment map. Let $I_i$ be the ideal spanned by $y - \chi_i(y)$, $y \in \m_i$. Let us assume that $(e_2, h_2, f_2)$ is a $\Gamma^2$-triple. Set  $\Slice_2 \defeq \Slice_{f_2} = \chi_2 + \Ker \ad^* (e_2)$ the corresponding Solodwy slice to $f_2$. 

\begin{theorem} \label{theorem:freeness} 
	We make the following assumptions:
	\begin{enumerate}
		\item there is a direct sum decomposition $\m_2 = \m_1 \oplus \m_0$, where $\m_0$ is a $\h$-stable Lie subalgebra of $\m_2$ and $\m_1$ is an ideal of $\m_2$,
		
		\item the nilpotent element $f_0 \defeq f_2 - f_1$ and the Lie algebra $\m_0$ are contained in $\g^{(1)}_0$,
		
		\item the semisimple element $q_0 \defeq q_2 - q_1$ commutes with $f_1$.
	\end{enumerate} 
	
	Then, if we denote by $M_0$ the unipotent subgroup of $G$ whose Lie algebra is $\m_0$, then there is a natural algebraic action of $M_0$ on the quotient ${\mu_1}^{-1}(0) / M_1$. Moreover, this action is Hamiltonian with moment map 
	\begin{alignat*}{2}
		\mu_0 : & \ & {\mu_{1}}^{-1}(0) / M_1 & \longrightarrow (\m_0)^* \\
		& &{[\xi]}  & \longmapsto (\xi - \chi_2)|_{\m_0}.
	\end{alignat*}
	In addition this action is free in the sense that there is a $M_0$-isomorphism
	$$ M_0 \times \Slice_2 \cong {\mu_0}^{-1}(0). $$
\end{theorem}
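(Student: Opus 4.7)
The plan is to construct the $M_0$-action on $\mu_1^{-1}(0)/M_1$ by descending the coadjoint $M_2$-action on $\g^*$, and then to derive the free-action statement by applying Theorem~\ref{theorem:slice-iso} to the pair $(f_2,\m_2)$; the starting identification $\mu_1^{-1}(0)/M_1\cong\Slice_{f_1}$ already follows from Theorem~\ref{theorem:slice-iso} applied to $f_1$. The key technical identity is $\chi_1([\m_0,\m_1])=0$. Since $\m_2$ is a good algebra for $f_2$, the character property of $\chi_2$ together with $\m_0,\m_1\subseteq\m_2$ from assumption~(1) gives $\chi_2([\m_0,\m_1])=0$. Writing the difference through the invariant form as $\chi_2-\chi_1=(f_0,\cdot)$, it remains to check $(f_0,[\m_0,\m_1])=0$: by assumption~(2), $f_0\in\g^{(1)}_0$ while $[\m_0,\m_1]\subseteq\g^{(1)}_{\geqslant1}$ (using $\m_0\subseteq\g^{(1)}_0$ and $\m_1\subseteq\g^{(1)}_{\geqslant1}$), and the form pairs these trivially. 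Iterating this vanishing with the ideal property $[\m_0,\m_1]\subseteq\m_1$ from assumption~(1), I would then verify that $\Ad^*(g)\chi_1-\chi_1\in\m_1^{\perp}$ for every $g\in M_0$, so that $M_0$ preserves $\mu_1^{-1}(0)=\chi_1+\m_1^{\perp}$. Since $M_1$ is normal in $M_2$, the resulting $M_2$-action descends to the quotient, and the induced action of $M_0\cong M_2/M_1$ is the one claimed.

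The same identity shows $(\xi-\chi_2)|_{\m_0}$ is $M_1$-invariant on $\mu_1^{-1}(0)$, so $\mu_0$ is well-defined on the quotient. On $\g^*$ the function $\xi\mapsto(\xi-\chi_2)(y)$ is the $y$-component of the standard moment map for the $M_2$-action shifted by $\chi_2$, hence has $\ad^*(y)$ as its Hamiltonian vector field; restriction to $\mu_1^{-1}(0)$ and descent to the quotient preserve this, making $\mu_0$ a moment map for $M_0$. For the freeness, I would first identify $\mu_0^{-1}(0)=\mu_2^{-1}(0)/M_1$: a class $[\xi]$ lies in $\mu_0^{-1}(0)$ iff $\xi|_{\m_0}=\chi_2|_{\m_0}$, and combined with $\xi|_{\m_1}=\chi_1|_{\m_1}=\chi_2|_{\m_1}$ (again by orthogonality of $f_0\in\g^{(1)}_0$ with $\m_1\subseteq\g^{(1)}_{\geqslant1}$) and the direct sum $\m_2=\m_1\oplus\m_0$, this is equivalent to $\xi\in\mu_2^{-1}(0)$. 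Theorem~\ref{theorem:slice-iso} applied to $f_2$ then provides the $M_2$-equivariant isomorphism $M_2\times\Slice_2\cong\mu_2^{-1}(0)$. Using the semidirect product decomposition $M_2=M_1\rtimes M_0$ coming from assumption~(1), quotienting by the left $M_1$-action yields the $M_0$-equivariant isomorphism $M_0\times\Slice_2\cong\mu_2^{-1}(0)/M_1=\mu_0^{-1}(0)$, with residual $M_0$-action being left multiplication on the first factor. This is precisely the free action announced.

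The main obstacle is the first block: reducing the problem to $\chi_1([\m_0,\m_1])=0$ and then propagating it through the iterated brackets that appear when exponentiating $\ad(\m_1)$ on elements of $\m_0$. The analysis must contend with the Lagrangian subspaces $\mathfrak{l}_i\subseteq\g^{(i)}_1$ hidden in the good algebras $\m_i$, so the decomposition $\m_2=\m_1\oplus\m_0$ must be shown to respect the skew form $\chi_2([\cdot,\cdot])|_{\g^{(2)}_1}$. Assumption~(3), which forces $f_1\in\g^{(2)}_{-2}$ and consequently $f_0\in\g^{(2)}_{-2}$, should enter precisely at this point to ensure that the bookkeeping with respect to both gradings $\Gamma^1$ and $\Gamma^2$ is consistent. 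Once this algebraic input is secured, everything else is formal, resting on Theorem~\ref{theorem:slice-iso} and the normality of $M_1$ in $M_2$.
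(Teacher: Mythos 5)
Your first two steps (descending the coadjoint $M_2$-action, and the well-definedness of $\mu_0$ via $\chi_1([\m_0,\m_1])=0$, which is Lemma \ref{lemma:compatibility} combined with the character property of $\chi_2$) match the paper's first claim, and your endgame --- Theorem \ref{theorem:slice-iso} applied to $(f_2,\m_2)$ together with $M_2=M_1\rtimes M_0$, then quotienting by $M_1$ --- is also how the paper concludes (Claim \ref{claim:freeness}). The genuine divergence is in the middle: you identify $\mu_0^{-1}(0)$ with $\mu_2^{-1}(0)/M_1$ purely set-theoretically (a class lies in $\mu_0^{-1}(0)$ iff any representative lies in $\mu_2^{-1}(0)$), whereas the paper proves Claim \ref{claim:isomorphism}: working inside $\mu_1^{-1}(0)$ through $\phi_1$, it establishes $\m_0\subseteq\Ker\ad(f_1)$ (Lemma \ref{lemma:f1-m0}), deduces that $\mathrm{d}\widetilde{\mu_0}$ is surjective, describes $\widetilde{\mu_0}^{-1}(0)$ as the affine space $\chi_2+\Ker\ad^*(e_1)\cap{\m_0}^{\perp}$ (Claim \ref{claim:stage-moment}, which needs $\chi_2\in\Slice_1$, i.e.\ Proposition \ref{proposition:nilpotent-order}), and finishes with a dimension count showing $\phi_1\left(M_1\times\widetilde{\mu_0}^{-1}(0)\right)=\mu_2^{-1}(0)$. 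That extra work is what identifies $\mu_2^{-1}(0)/M_1$ with the closed subvariety $\mu_0^{-1}(0)\subseteq\mu_1^{-1}(0)/M_1\cong\Slice_1$ \emph{as varieties}, with an explicit inverse, and it also yields the explicit fiber description reused later in the quantum part (Propositions \ref{proposition:green-diagram} and \ref{proposition:purple-diagram}).

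This is precisely where your write-up has a gap: ``quotienting by the left $M_1$-action yields $M_0\times\Slice_2\cong\mu_2^{-1}(0)/M_1=\mu_0^{-1}(0)$'' only produces a bijective $M_0$-equivariant morphism onto the set $\mu_0^{-1}(0)$ sitting inside $\Slice_1$; you still must argue that it is an isomorphism onto $\mu_0^{-1}(0)$ with its subvariety structure. This can be patched without the paper's dimension count (for instance: $\widetilde{\mu_0}$ is affine-linear on the affine space $\Slice_1$, so its zero fiber is an affine subspace, hence smooth, and a bijective morphism onto a normal complex variety is an isomorphism), but as written the step is asserted, not proved. Your closing paragraph also misplaces the difficulty: no compatibility of $\m_2=\m_1\oplus\m_0$ with the symplectic form on $\g^{(1)}_1$ is needed anywhere, and assumption (3) does not enter through such bookkeeping; in the paper it enters through Lemma \ref{lemma:nice-triple} and Proposition \ref{proposition:nilpotent-order}, which produce an $\sl_2$-triple for $f_1$ homogeneous for both gradings with $[f_0,e_1]=0$, hence $\chi_2\in\Slice_1$ --- the input behind Claim \ref{claim:stage-moment}. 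Indeed your sketch never invokes assumption (3) at all, which should have been a warning that the route you describe is not the one the paper takes.
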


The freeness of the Hamiltonian $M_0$-action implies that the reduction by stage holds for Slodowy slice.

\begin{theorem}\label{theorem:geometrical-stage-reduction}
	We keep the assumptions of Theorem \ref{theorem:freeness}. Then, the Hamiltonian reduction ${\mu_0}^{-1}(0) / M_0$ is a well-defined Poisson variety isomorphic to $\Slice_2$. The inclusion of ${\mu_{2}}^{-1}(0)$ in ${\mu_{1}}^{-1}(0)$ induces a Poisson variety isomorphism
	$$ {\mu_2}^{-1}(0)/M_2 \cong {\mu_0}^{-1}(0) / M_0. $$
	The co-homomorphism
	\begin{alignat*}{2}
		\Psi : & \ & \left(\Poi(\g, f_1)/I_0\right)^{\ad(\m_0)} & \longrightarrow  \Poi(\g, f_2) \\
		& & (X \bmod I_1) \bmod I_0 & \longmapsto  X \bmod I_2
	\end{alignat*}
	is a Poisson algebra isomorphism, where $I_0$ the ideal of $\Poi(\g, f_1)$ spanned by 
	$$(y - \chi_2(y)) \bmod I_1, \quad y \in \m_0.$$
\end{theorem}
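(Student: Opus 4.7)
The plan is to derive Theorem~\ref{theorem:geometrical-stage-reduction} as a formal consequence of Theorem~\ref{theorem:freeness}. First, the $M_0$-equivariant isomorphism $M_0\times\Slice_2\cong \mu_0^{-1}(0)$ provided by Theorem~\ref{theorem:freeness} exhibits $\mu_0^{-1}(0)$ as a trivial principal $M_0$-bundle over $\Slice_2$. In particular the geometric quotient exists, is isomorphic to $\Slice_2$ as a variety, and, exactly as in Subsection~\ref{sec:Slodowy} for the one-stage reduction, it inherits a Poisson structure from $\mu_1^{-1}(0)/M_1\cong\Slice_1$ via formula~(\ref{equation:reduced-poisson}). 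Combined with the identification $\mu_2^{-1}(0)/M_2\cong\Slice_2$ of Theorem~\ref{theorem:slice-iso} applied directly to $f_2$, this produces two Poisson structures on $\Slice_2$; the real content is to show that they coincide, which I would do at the level of functions through the homomorphism $\Psi$.

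To that end, the key algebraic input is the identity $I_2 = I_1 + J_0$ inside $\Sym(\g)$, where $J_0$ is the ideal generated by $\{y - \chi_2(y) : y\in\m_0\}$. Since $\m_2 = \m_1\oplus\m_0$ by hypothesis~(1), this amounts to $\chi_2|_{\m_1} = \chi_1|_{\m_1}$, equivalently $(f_0 \mid \m_1) = 0$: by construction $\m_1 \subseteq \bigoplus_{i\geqslant 1}\g^{(1)}_i$, while $f_0\in\g^{(1)}_0$ by hypothesis~(2), and distinct eigenspaces of $\ad(q_1)$ are orthogonal for the invariant form. Consequently $\Sym(\g)/I_2 \cong (\Sym(\g)/I_1)/\overline{J_0}$, with $\overline{J_0}$ the image of $J_0$. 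I would next check that for $y\in\m_0$ the class $(y-\chi_2(y))\bmod I_1$ actually belongs to $\Poi(\g, f_1)$: for $x\in\m_1$, the identity $\{x-\chi_1(x),\,y-\chi_2(y)\} = [x,y]$ together with $[x,y]\in\m_1$ (hypothesis~(1)) and $\chi_2([x,y])=0$ (character property of $\chi_2$ on $\m_2$) yields $[x,y]\in I_1$. An analogous computation shows that the $\ad(\m_0)$-action descends to a Hamiltonian action by Poisson derivations on $\Poi(\g,f_1)/I_0$.

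The conclusion is the chain of isomorphisms
\begin{equation*}
	\bigl(\Poi(\g,f_1)/I_0\bigr)^{\ad(\m_0)} \;\cong\; \bigl((\Sym(\g)/I_1)^{\ad(\m_1)}/\overline{J_0}\bigr)^{\ad(\m_0)} \;\cong\; (\Sym(\g)/I_2)^{\ad(\m_2)} \;=\; \Poi(\g,f_2),
\end{equation*}
of which $\Psi$ is the composition. It is manifestly a Poisson algebra map by iterated application of formula~(\ref{equation:reduced-poisson}), and is bijective because by the first paragraph both the source and the target coincide with $\C[\Slice_2]$. The main technical obstacle is the middle isomorphism: it requires ``taking $\ad(\m_1)$-invariants'' and ``quotienting by $\overline{J_0}$'' to commute, i.e.\ that every $\ad(\m_2)$-invariant of $\Sym(\g)/I_2$ lift to an $\ad(\m_1)$-invariant of $\Sym(\g)/I_1$. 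This commutation is the algebraic shadow of the freeness of the $M_1$-action on $\mu_1^{-1}(0)$ from Theorem~\ref{theorem:slice-iso} together with the freeness of the $M_0$-action from Theorem~\ref{theorem:freeness}, and I would establish it by returning to the geometric side, where it follows from the short exact sequence $1\to M_1\to M_2\to M_0\to 1$ (coming from hypothesis~(1)) and the resulting identification of the $M_2$-quotient of $\mu_2^{-1}(0)$ with the iterated $M_1$-then-$M_0$ quotient, which also makes the inclusion $\mu_2^{-1}(0)\subseteq\mu_1^{-1}(0)$ induce the stated Poisson isomorphism $\mu_2^{-1}(0)/M_2\cong\mu_0^{-1}(0)/M_0$.
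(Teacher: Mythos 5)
Your reduction of the theorem to a single ``middle isomorphism'', together with the correct preliminaries ($\chi_1=\chi_2$ on $\m_1$, hence $I_2=I_1+J_0$, and the check that $(y-\chi_2(y))\bmod I_1$ lies in $\Poi(\g,f_1)$ for $y\in\m_0$), follows the same skeleton as the paper. The gap is exactly at the step you flag as the main obstacle: you assert that the commutation of taking $\ad(\m_1)$-invariants with the quotient by $\overline{J_0}$ ``follows from the short exact sequence $1\to M_1\to M_2\to M_0\to 1$ and the resulting identification of the $M_2$-quotient with the iterated quotient''. That exact sequence only yields the formal identity $(X/M_1)/M_0=X/M_2$; it says nothing about how $\mu_2^{-1}(0)$ sits inside $\mu_1^{-1}(0)$ relative to the trivialization $\phi_1\colon M_1\times\Slice_1\cong\mu_1^{-1}(0)$ of Theorem \ref{theorem:slice-iso}, which is what the commutation actually encodes. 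What is needed is the paper's Claim \ref{claim:isomorphism}: $\phi_1$ restricts to an isomorphism $M_1\times\widetilde{\mu_0}^{-1}(0)\cong\mu_2^{-1}(0)$, with $\widetilde{\mu_0}^{-1}(0)=\chi_2+\Ker\ad^*(e_1)\cap{\m_0}^{\perp}$ (Claim \ref{claim:stage-moment}); the paper proves this using Lemma \ref{lemma:f1-m0} ($\m_0\subseteq\Ker\ad(f_1)$), the resulting surjectivity of $\mathrm{d}\widetilde{\mu_0}$, and a dimension count, and none of these ingredients appear in your proposal. Note also that this claim is not contained in the \emph{statement} of Theorem \ref{theorem:freeness} (it is established on the way to it), so you cannot obtain it by merely quoting freeness.

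Concretely, two things are missing. First, surjectivity: $\m_1$ is not reductive, so the surjection $\Sym(\g)/I_1\twoheadrightarrow\Sym(\g)/I_2$ need not surject on $\ad(\m_1)$-invariants; one gets it only after knowing that, in the trivialization $\phi_1$, the subvariety $\mu_2^{-1}(0)$ is the saturation $M_1\times\widetilde{\mu_0}^{-1}(0)$ of a subvariety of $\Slice_1$, so that restriction of $M_1$-invariant functions becomes the (surjective) restriction $\C[\Slice_1]\to\C[\widetilde{\mu_0}^{-1}(0)]$. (The purely set-theoretic version of this is easy from $M_1$-stability and Lemma \ref{lemma:compatibility}, but the variety-level statement is what the paper's dimension count delivers.) Second, the kernel: injectivity of $\Psi$ requires that the kernel of this restriction be exactly $I_0$, i.e.\ that $I_0$ be the full vanishing ideal of $\widetilde{\mu_0}^{-1}(0)$ inside $\Poi(\g,f_1)\cong\C[\Slice_1]$; this again rests on the surjectivity of $\mathrm{d}\widetilde{\mu_0}$, hence on Lemma \ref{lemma:f1-m0}, which your argument never invokes. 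Your stated reason for bijectivity --- that source and target both coincide with $\C[\Slice_2]$ --- is not an argument: two algebras abstractly isomorphic to $\C[\Slice_2]$ admit non-bijective homomorphisms between them; one must identify $\Psi$ with the comorphism of the geometric isomorphism, which is precisely what Claims \ref{claim:isomorphism} and \ref{claim:freeness} provide in the paper.
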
 

\begin{remark}
	Because of the inclusion $\m_0 \subseteq \g^{(1)}_0$, to check that $\m_1$ is an ideal of $\m_2$, it is enough to check that $[\mathfrak{l}_1, \m_0] \subseteq \mathfrak{l}_1$, where $\mathfrak{l}_1$ is the Lagrangian subspace chosen to construct $\m_1$. For even gradings, we get it for free because $\mathfrak{l}_1 = \{0\}$.
\end{remark}

In order to prove Theorems \ref{theorem:freeness} and \ref{theorem:geometrical-stage-reduction}, we state some preliminary results.

\begin{lemma}\label{lemma:compatibility}
	If $f_0 = f_2 - f_1 \in \g_0^{(1)}$, then $\chi_1$ and $\chi_2$ coincide on $\m_1$. Hence, we have the commutative diagram
	$$ \begin{tikzcd}[column sep = small]
		\g^*\arrow[rr, "\mu_{2}"] \arrow[rd, "\mu_{1}"'] & & (\m_{2})^* \arrow[ld, two heads] \\
		& (\m_{1})^* &
	\end{tikzcd} $$
	and ${\mu_{2}}^{-1}(0)$ is contained in ${\mu_{1}}^{-1}(0)$.
\end{lemma}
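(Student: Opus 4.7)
The plan is to unpack the definitions and exploit the orthogonality properties of the invariant form with respect to the grading $\Gamma^{1}$. Recall that $\chi_i = (f_i \mid \cdot)$ and that $\m_1 = \mathfrak{l}_1 \oplus \bigoplus_{j \geqslant 2}\g_j^{(1)}$, with $\mathfrak{l}_1 \subseteq \g_1^{(1)}$, so $\m_1 \subseteq \bigoplus_{j \geqslant 1}\g_j^{(1)}$. Since the form $(\cdot\mid\cdot)$ is invariant and the grading $\Gamma^{1}$ is the eigenspace decomposition of $\ad(q_1)$ for the semisimple element $q_1 \in \h$, one has the standard orthogonality
\[
(\g_i^{(1)} \mid \g_j^{(1)}) = 0 \quad \text{unless} \quad i + j = 0.
\]
This is the essential algebraic input I plan to use.

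First I would show that $\chi_1$ and $\chi_2$ agree on $\m_1$. Their difference is $\chi_2 - \chi_1 = (f_0\mid\cdot)$, so it suffices to prove $(f_0 \mid y) = 0$ for every $y \in \m_1$. By hypothesis $f_0 \in \g_0^{(1)}$, and every homogeneous component of any $y\in \m_1$ lies in some $\g_j^{(1)}$ with $j\geqslant 1$. The orthogonality recalled above then forces $(f_0\mid y)=0$, which is the desired equality.

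Next I would deduce the commutative diagram. The right-hand arrow $(\m_2)^*\twoheadrightarrow(\m_1)^*$ is the restriction dual to the inclusion $\m_1\hookrightarrow\m_2$, so for any $\xi\in\g^*$ we compute
\[
\mu_2(\xi)\bigr|_{\m_1} \;=\; (\xi - \chi_2)\bigr|_{\m_1} \;=\; (\xi - \chi_1)\bigr|_{\m_1} \;=\; \mu_1(\xi),
\]
the middle equality being exactly the content of the previous step. Commutativity follows, and since the restriction map sends $0$ to $0$, the diagram yields the inclusion $\mu_2^{-1}(0) \subseteq \mu_1^{-1}(0)$.

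There is no real obstacle: the whole statement is a direct consequence of the grading hypothesis $f_0 \in \g_0^{(1)}$ combined with the invariance of $(\cdot\mid\cdot)$. The only minor subtlety worth mentioning explicitly is the inclusion $\mathfrak{l}_1 \subseteq \g_1^{(1)}$, ensuring that every piece of $\m_1$ sits in strictly positive $\Gamma^{1}$-degree, so that orthogonality with $\g_0^{(1)}$ really applies across all of $\m_1$ (and not merely on the $\geqslant 2$ part).
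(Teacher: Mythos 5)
Your proof is correct and follows exactly the paper's own (one-line) argument: the orthogonality of $f_0 \in \g_0^{(1)}$ with $\m_1 \subseteq \g^{(1)}_{>0}$ under the invariant form, which you simply spell out in more detail together with the resulting commutativity of the diagram.
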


\begin{proof}
	It follows from the orthogonality of $f_0 \in \g_0^{(1)}$ and $\m_1 \subseteq \g^{(1)}_{>0}$ with respect to the invariant bilinear form.
\end{proof}

\begin{lemma}\label{lemma:f1-m0}
	Under the hypotheses of Theorem \ref{theorem:geometrical-stage-reduction}, the Lie subalgebra $\m_0$ is contained in $\Ker \ad(f_1)$.
\end{lemma}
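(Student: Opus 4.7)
The plan is to reduce the statement $[f_{1}, x] = 0$ for every $x \in \m_{0}$ to testing against $\g^{(1)}_{2}$ via the invariant bilinear form $(\cdot\mid\cdot)$. Since $x \in \m_{0} \subseteq \g^{(1)}_{0}$ by hypothesis (2) and $f_{1} \in \g^{(1)}_{-2}$ by the good grading condition, the bracket $[f_{1}, x]$ lies in $\g^{(1)}_{-2}$. The form $(\cdot\mid\cdot)$ restricts to a nondegenerate pairing $\g^{(1)}_{-2} \times \g^{(1)}_{2} \to \C$ (distinct $q_{1}$-weight spaces are orthogonal and the global form is nondegenerate on $\g$), so it is enough to prove $([f_{1}, x] \mid y) = 0$ for every $y \in \g^{(1)}_{2}$.

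The key structural observation is that $\g^{(1)}_{2}$ is automatically contained in the good algebra $\m_{1}$ by its very construction, and hypothesis (1) gives the inclusion $\m_{1} \subseteq \m_{2}$. Consequently both $x$ and $y$ sit in $\m_{2}$, and the character property of $\chi_{2}$ on $\m_{2}$ yields $\chi_{2}([x, y]) = (f_{2} \mid [x, y]) = 0$. Expanding $f_{2} = f_{1} + f_{0}$ gives $(f_{1} \mid [x, y]) + (f_{0} \mid [x, y]) = 0$, and invariance of the form rewrites the first summand as $([f_{1}, x] \mid y)$.

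It remains only to kill the $f_{0}$-contribution. Hypothesis (2) places $f_{0}$ in $\g^{(1)}_{0}$, while $[x, y]$ sits in $\g^{(1)}_{2}$ (since $x$ has $\Gamma^{1}$-degree $0$ and $y$ has $\Gamma^{1}$-degree $2$); orthogonality of distinct $q_{1}$-weight spaces under the invariant form then forces $(f_{0} \mid [x, y]) = 0$. This yields $([f_{1}, x] \mid y) = 0$ for every $y \in \g^{(1)}_{2}$, hence $[f_{1}, x] = 0$ by nondegeneracy. The only delicate point is recognizing that hypotheses (1) and (2) together place $\g^{(1)}_{2}$ inside $\m_{2}$, which is what makes the character property of $\chi_{2}$ applicable to test vectors pulled from the $\Gamma^{1}$-grading; hypothesis (3) does not seem to enter this particular step and is presumably used elsewhere in Theorem \ref{theorem:freeness}.
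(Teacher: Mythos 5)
Your proof is correct and follows essentially the paper's own argument: reduce via the $\Gamma^1$-grading to pairing against $\g^{(1)}_2 \subseteq \m_1 \subseteq \m_2$ and then exploit that $\chi_2$ is a character of $\m_2$. The only cosmetic differences are that you kill the $f_0$-term directly by orthogonality of distinct $q_1$-weight spaces (which is precisely the content of Lemma \ref{lemma:compatibility}, invoked in the paper) and conclude via nondegeneracy of the $\g^{(1)}_{-2}\times\g^{(1)}_{2}$ pairing rather than via the identification of $\Ker\ad(f_1)$ with the orthogonal of $[\g,f_1]$.
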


\begin{proof}
	This inclusion is equivalent to prove that $([x, f_1] \mid z) =0$ for all $z \in \m_0$ and $x \in \g$, because  $[\g, f_1]$ is the orthognal set of $\Ker \ad(f_1)$ with respect to the bilinear form. But $z \in \m_0 \subseteq \g_0^{(1)}$ is orthogonal to any $y \in \g_j^{(1)}$ for $j \neq 0$. So we can restrict to $x \in \g^{(1)}_{2}$. 
	
	By invariance, one has $ ([x, f_1] \mid z) =  \chi_1([z, x])$, 
	and because $[z,x] \in [\m_0, \m_1] \subseteq \m_1$, the previous lemma implies $ ([x, f_1] \mid z) =  \chi_2([z, x])$. But $\chi_2$ restricted to $\m_2$ is a character, hence $ ([x, f_1] \mid z) =  0$.
\end{proof}

\begin{lemma} \label{lemma:nice-triple}
	If the assumptions of Theorem \ref{theorem:geometrical-stage-reduction} are true, then $f_1$ lies in $\g^{(2)}_{-2}$ and it can be embedded in an $\sl_2$-triple $(e_1, h_1, f_1)$ such that $e_1 \in \g^{(2)}_2$ and $h_1 \in \g^{(2)}_0$. 
\end{lemma}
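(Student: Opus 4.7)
The plan is to treat the two assertions separately. The first claim, $f_1 \in \g^{(2)}_{-2}$, follows immediately from the decomposition $q_2 = q_1 + q_0$ combined with $f_1 \in \g^{(1)}_{-2}$ (since $\Gamma^1$ is good for $f_1$) and hypothesis (3): indeed,
\[
[q_2, f_1] = [q_1, f_1] + [q_0, f_1] = -2 f_1 + 0 = -2 f_1.
\]

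For the $\sl_2$-triple, my strategy is to work entirely inside the centralizer $\g^{q_0} \defeq \{x \in \g \mid [q_0, x] = 0\}$. Because $q_0 \in \h$, this is a Levi (hence reductive) subalgebra of $\g$; it contains $f_1$ by hypothesis (3); and since $[q_0, q_1] = 0$, it is a graded subspace for $\Gamma^1$. The key observation is that $\ad(q_2) = \ad(q_1) + \ad(q_0)$ restricts to $\ad(q_1)$ on $\g^{q_0}$, whence the inclusion
\[
\g^{q_0} \cap \g^{(1)}_j \subseteq \g^{(2)}_j \qquad (j \in \Z).
\]
It therefore suffices to produce an $\sl_2$-triple $(e_1, h_1, f_1)$ inside $\g^{q_0}$ with $e_1$ of $\Gamma^1$-degree $2$ and $h_1$ of $\Gamma^1$-degree $0$: such a triple will automatically satisfy the required $\Gamma^2$-degree conditions.

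To construct this triple, I would invoke the Elashvili--Kac refinement of Jacobson--Morosov \cite[Lemma 1.1]{elashvili2005grading} applied to the reductive Lie algebra $\g^{q_0}$ endowed with the restricted grading. The only nontrivial input is that this restricted grading is itself good for $f_1$ on $\g^{q_0}$. Injectivity of $\ad(f_1)$ on $(\g^{q_0})^{(1)}_j$ for $j \geq 1$ is inherited from the ambient injectivity on $\g^{(1)}_j$. For surjectivity when $j \leq 1$, given $w \in (\g^{q_0})^{(1)}_{j-2}$ I would first lift it to some $v \in \g^{(1)}_j$ with $[f_1, v] = w$ using the goodness of $\Gamma^1$ on $\g$, then decompose $v = \sum_k v_k$ along the $\ad(q_0)$-eigenspaces. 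Because $[q_0, f_1] = 0$, each $[f_1, v_k]$ sits in the same $\ad(q_0)$-eigenspace as $v_k$; comparing with $w$, which lies in the zero eigenspace, forces $[f_1, v_0] = w$ with $v_0 \in (\g^{q_0})^{(1)}_j$.

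The main obstacle is precisely this surjectivity verification, as it is the only step genuinely mixing hypothesis (3) with the ambient good-grading property of $\Gamma^1$. All remaining ingredients---namely the graded Jacobson--Morosov statement of \cite[Lemma 1.1]{elashvili2005grading} and the eigenvalue bookkeeping---are routine once the good grading structure on $\g^{q_0}$ has been established.
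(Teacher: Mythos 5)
Your argument is correct, but it takes a genuinely different route from the paper's. The paper never leaves $\g$: it starts from an arbitrary $\Gamma^1$-triple $(e_1',h_1',f_1)$ given by \cite[Lemma 1.1]{elashvili2005grading}, projects $h_1'$ onto its component in $\g^{(1)}_0\cap\g^{(2)}_0$ and $e_1'$ onto $\g^{(1)}_2\cap\g^{(2)}_2$, compares $\Gamma^2$-homogeneous components of the relations $[h_1',f_1]=-2f_1$ and $[e_1',f_1]=h_1'$, and then performs one further correction of $e_1''$ along $\ad(h_1)$-eigenspaces following \cite[Proof of (3.3.10)]{collingwood1993nilpotent}. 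You instead pass to the centralizer $\g^{q_0}$, prove that the restriction of $\Gamma^1$ to it is again a good grading for $f_1$ (your surjectivity check via the $\ad(q_0)$-eigenspace decomposition of a lift, using $[q_0,f_1]=0$, is exactly the point and is right), apply the graded Jacobson--Morozov lemma inside $\g^{q_0}$, and use $\g^{q_0}\cap\g^{(1)}_j\subseteq\g^{(2)}_j$ to conclude. Your route is conceptually cleaner and yields for free that the triple is simultaneously a $\Gamma^1$-triple and has the stated $\Gamma^2$-degrees, which is the form actually used later in the paper (e.g.\ in Proposition \ref{proposition:nilpotent-order}). The one point you should make explicit is that \cite[Lemma 1.1]{elashvili2005grading} is formulated for semisimple Lie algebras while $\g^{q_0}$ is only reductive; this is harmless, since $f_1$ lies in the semisimple derived subalgebra $[\g^{q_0},\g^{q_0}]$, the centre of $\g^{q_0}$ is concentrated in degree $0$ (it commutes with $q_1\in\g^{q_0}$), and the restricted grading on the derived subalgebra is still good, so the lemma applies there. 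The paper's correction argument avoids introducing the subalgebra altogether, at the cost of slightly heavier bookkeeping with the commuting gradings.
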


\begin{proof}
	We have $[q_0, f_1] = 0$, hence $f_1 \in \g^{(2)}_{-2}$. According to \cite[Lemma 1.1]{elashvili2005grading}, one can find a $\Gamma^1$-triple $(e'_1, h_1', f_1)$. Set $h_1' = h_1 + \sum_{j \neq 0} x_j$ where $h_1 \in \g^{(1)}_0 \cap \g^{(2)}_0$ and $x_j \in \g^{(1)}_0 \cap \g^{(2)}_j$; and set $e_1' = e_1'' + \sum_{j \neq 2} y_j$ where $e_1'' \in \g^{(1)}_2 \cap \g^{(2)}_2$ and $y_j \in \g^{(1)}_2 \cap \g^{(2)}_j$. 
 
    We have
    $$ - 2 f_1 = [h_1', f_1] = [h_1, f_1] + \sum_{j\neq0} [x_j, f_1]. $$
    On the right-hand side, $[h_1, f_1] \in \g^{(2)}_{-2}$ and $[x_j, f_1] \in \g^{(2)}_{j-2}$ for $j \neq 0$. Because $f_1 \in \g^{(2)}_{-2}$, by identification one has: $[h_1, f_1] = - 2 f_1$. We apply the same argument to the identity $ h_1' = [e_1', f_1]$: looking at the gradings on both sides, one sees that $h_1 = [e_1'', f_1]$.

    To conclude, we use an argument from \cite[Proof of (3.3.10)]{collingwood1993nilpotent}. Because $h_1$ commutes with $q_1$ and $q_2$, their three adjoint actions admit a common eigenbasis. We decompose $e_1''$ as a sum of eigenvectors for $\ad(h_1)$:
    $e_1'' = e_1 + \sum_{j \neq 2} z_j$, where $e_1 \in \g^{(1)}_2 \cap \g^{(2)}_2 \cap \g(h_1, 2)$ and $z_j \in \g^{(1)}_2 \cap \g^{(2)}_2 \cap \g(h_1, j)$. We have:
	$$ h_1 = [e_1'', f_1] = [e_1, f_1] + \sum_{j \neq 2} [z_j, f_1]. $$
	One sees that, for $j \neq 2$, $[z_j, f_1] \in \g^{(1)}_0 \cap \g^{(2)}_0 \cap \g(h_1, j-2)$, and $[e_1, f_1] \in \g^{(1)}_0 \cap \g^{(2)}_0 \cap \g(h_1, 0)$. Since $h_1 \in \g^{(1)}_0 \cap \g^{(2)}_0 \cap \g(h_1, 0)$, by identification one see that $h_1 = [e_1, f_1]$.

    We conclude that $(e_1, h_1, f_1)$ is the desired $\sl_2$-triple.
\end{proof}

\subsection{Comparison of the two nilpotent orbits}

We recall that if $f$ is nilpotent in $\g$ and if $O_f$ denotes the adjoint orbit of $f$, then $O_f$ is \textit{conical}, that is to say stable under the action of $\C^{\times}$ by multiplication. When the Main Theorems hold, we can compare $O_{f_1}$ and $O_{f_2}$.

\begin{proposition}\label{proposition:nilpotent-order}
	We assume the hypotheses Theorems \ref{theorem:freeness} and \ref{theorem:geometrical-stage-reduction} and we fix an $\sl_2$-triple $(e_1, h_1, f_1)$ as in Lemma \ref{lemma:nice-triple}. Then,  $[f_0, e_1]=0$ and $O_{f_1}$ is contained in the Zariski closure of $O_{f_2}$. 
\end{proposition}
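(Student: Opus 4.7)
The plan is to establish the two assertions separately; neither requires deep machinery, only careful bookkeeping with the bi-grading induced by $q_1$ and $q_2$.

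For the vanishing $[f_0, e_1] = 0$, I will argue purely by degree. From Lemma~\ref{lemma:nice-triple} the element $e_1$ lies in $\g^{(1)}_2 \cap \g^{(2)}_2$. Hypothesis~(2) of Theorem~\ref{theorem:freeness} places $f_0$ in $\g^{(1)}_0$, and because $f_2 \in \g^{(2)}_{-2}$ (since $\Gamma^2$ is good for $f_2$) and $f_1 \in \g^{(2)}_{-2}$ (by Lemma~\ref{lemma:nice-triple}), the difference $f_0 = f_2 - f_1$ also belongs to $\g^{(2)}_{-2}$. Consequently $[f_0, e_1] \in \g^{(1)}_2 \cap \g^{(2)}_0$. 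The key observation is then that $\g^{(1)}_2 \subseteq \m_1$ by the very definition of the good algebra $\m_1$, that $\m_1 \subseteq \m_2$ by hypothesis~(1), and that $\m_2 \subseteq \g^{(2)}_{\geqslant 1}$ by the general construction of the good subalgebra $\m_2$. These inclusions force $\m_1 \cap \g^{(2)}_0 = \{0\}$, and hence $[f_0, e_1] = 0$.

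For the orbit inclusion $O_{f_1} \subseteq \overline{O_{f_2}}$, the plan is to exhibit $f_1$ as a degeneration of $f_2$ using the one-parameter subgroup $\{\exp(s q_0)\}_{s \in \C} \subseteq G$. Combining $f_0 \in \g^{(1)}_0 \cap \g^{(2)}_{-2}$ with hypothesis~(3) yields $[q_0, f_1] = 0$ and $[q_0, f_0] = -2 f_0$, so
$$ \Ad\bigl(\exp(s q_0)\bigr)\, f_2 = f_1 + e^{-2s} f_0. $$
Setting $t = e^{-2s}$, this shows $f_1 + t f_0 \in O_{f_2}$ for every $t \in \C^{\times}$. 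Letting $t \to 0$ in this algebraic family gives $f_1 \in \overline{O_{f_2}}$, and the $G$-stability of $\overline{O_{f_2}}$ yields $O_{f_1} = \Ad(G) \cdot f_1 \subseteq \overline{O_{f_2}}$.

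I expect no real obstacle. The only subtle point is recognizing, in the first part, that $\g^{(1)}_2 \cap \g^{(2)}_0$ is forced to lie inside $\m_1 \cap \g^{(2)}_0 = \{0\}$; once this is spotted the first assertion is immediate, and the second is the standard degeneration trick along a subtorus of the centraliser of $f_1$.
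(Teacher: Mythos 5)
Your proof is correct. The first assertion is handled exactly as in the paper: both arguments place $[f_0,e_1]$ in $\g^{(1)}_2\cap\g^{(2)}_0$ and kill it via $\g^{(1)}_2\subseteq\m_1\subseteq\m_2\subseteq\g^{(2)}_{\geqslant 1}$, so there is nothing to compare there. For the orbit inclusion, however, you take a genuinely different route. The paper uses the Kazhdan contraction attached to the $\sl_2$-triple $(e_1,h_1,f_1)$: the action $t\cdot x=t^2\Ad(\gamma(t))x$ preserves $f_1+\Ker\ad(e_1)$ and contracts it to $f_1$, and since $[f_0,e_1]=0$ guarantees $f_2\in f_1+\Ker\ad(e_1)$, one gets $f_1=\lim_{t\to\infty}t\cdot f_2\in\overline{O_{f_2}}$; note this step genuinely consumes the first assertion and the conicity of nilpotent orbits. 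You instead degenerate along the one-parameter subgroup $\exp(sq_0)$: hypotheses (2) and (3) give $[q_0,f_1]=0$ and $[q_0,f_0]=-2f_0$ (the latter because $f_0\in\g^{(1)}_0\cap\g^{(2)}_{-2}$ and $q_0=q_2-q_1$), so $\Ad(\exp(sq_0))f_2=f_1+e^{-2s}f_0$, the line $f_1+\C^{\times}f_0$ lies in $O_{f_2}$, and its Zariski closure (the whole line, the case $f_0=0$ being trivial) contains $f_1$; $G$-stability of $\overline{O_{f_2}}$ finishes. Your variant is slightly more economical: it needs neither the $\sl_2$-triple contraction nor the first assertion, only the bi-grading data, whereas the paper's argument has the conceptual advantage of exhibiting $f_2$ as a point of the Slodowy slice $f_1+\Ker\ad(e_1)$ being contracted to $f_1$, which is the picture reused elsewhere in the paper (e.g.\ it is what makes $\chi_2\in\Slice_1$ in the proof of Theorem \ref{theorem:freeness}). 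Either way the statement is proved; if you adopt your version, just record separately that $\chi_2\in\Slice_1$ still follows from $[f_0,e_1]=0$, since later arguments rely on that membership rather than on the orbit closure itself.
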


\begin{proof}
	We have $f_0 \in  \g^{(1)}_0 \cap \g^{(2)}_{-2}$ and $e_1 \in  \g^{(1)}_2 \cap \g^{(2)}_{2}$, hence $[f_0, e_1] \in \g^{(1)}_2 \cap \g^{(2)}_{0}$. But $\g^{(1)}_2 \cap \g^{(2)}_{0} \subseteq \m_2 \cap \g^{(2)}_{0} = \{0\}$, so $[f_0, e_1]=0$.
	
	According to \cite{gan2002quantization}, the $\sl_2$-triple induces a Lie algebra homomorphism $\sl_2 \rightarrow \g$ which exponentiate to a group homomorphism $ \mathrm{SL}_2 \rightarrow G$. If we restrict to the Cartan subgroup of $\mathrm{SL}_2$, we get a homomorphism $ \gamma : \C^{\times} \rightarrow G. $ It is well-known that the action
	$$ t \cdot x \defeq t^2 \Ad(\gamma(t)) x, \quad t \in \C^{\times}, \quad x \in \g, $$
	preserves $f_1 + \Ker \ad(e_1)$, and contracts to $f_1$ in the sense that: for any $y \in \Ker \ad(e_1)$,
	$$ t \cdot (f_1 + y) = f_1 + t \cdot y \longrightarrow f_1 \quad \text{as} \quad t \to \infty. $$
	Moreover, this actions preserves any nilpotent orbit. But, by assumption, $f_2$ is in $f_1 + \Ker \ad(e_1)$, hence $f_1 = \lim\limits_{t \to \infty} (t \cdot f_2) $	lies in the closure of the orbit of $f_2$.
\end{proof}

\subsection{Proof of Theorems \ref{theorem:freeness} and \ref{theorem:geometrical-stage-reduction}} 

From now,  fix a $\Gamma^1$-triple $(e_1, h_1, f_1)$ as provided by Lemma \ref{lemma:nice-triple}. Set  $\Slice_1 \defeq \Slice_{f_1} = \chi_1 + \Ker \ad^* (e_1)$ the corresponding Solodwy slice of $f_1$. For $i=1,2$, we denote by
$$ \phi_i : M_{i} \times \Slice_{i}  \overset{\sim}{\longrightarrow}  {\mu_{i}}^{-1}(0) $$
the isomorphisms given by Theorem \ref{theorem:slice-iso}. 

\begin{claim} 
	There is a natural algebraic action of $M_0$ on ${\mu_1}^{-1}(0) / M_1$ which is Hamiltonian with moment map $\mu_0$.
\end{claim}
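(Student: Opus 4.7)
The plan decomposes the claim into three tasks: producing an $M_0$-action on $\mu_1^{-1}(0)$, descending it to the quotient by $M_1$, and checking the Hamiltonian moment map property. The unifying structural fact is that Hypothesis~(1) makes $\m_1$ an ideal of $\m_2$, so by connectedness of these unipotent groups $M_1$ is normal in $M_2$ and $M_2 \cong M_1 \rtimes M_0$, with $M_0 \cong M_2/M_1$.

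First I would verify that the coadjoint action of $M_0$ preserves $\mu_1^{-1}(0) = \chi_1 + \m_1^\perp$. Stability of $\m_1^\perp$ follows at once from $\Ad(M_0)\m_1 = \m_1$. For the $\chi_1$-shift, it suffices to show $\chi_1(\Ad(g^{-1}) y - y) = 0$ when $g = \exp(z) \in M_0$ and $y \in \m_1$. Expanding $\Ad(\exp(-z)) y = y - [z, y] + \tfrac{1}{2} [z, [z, y]] - \cdots$, every nonzero term lies in $[\m_0, \m_1] + [\m_0, [\m_0, \m_1]] + \cdots \subseteq \m_1 \cap [\m_2, \m_2]$; Lemma~\ref{lemma:compatibility} lets me replace $\chi_1$ by $\chi_2$ on such elements, after which each term vanishes because $\chi_2$ is a character of $\m_2$. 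A completely analogous calculation, for $h = \exp(w) \in M_1$ and $y \in \m_0$, shows that $(\xi - \chi_2)|_{\m_0}$ depends only on the $M_1$-orbit of $\xi$, so $\mu_0$ is well-defined. Normality then ensures that the $M_0$-action itself descends to $\mu_1^{-1}(0)/M_1$.

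For the Hamiltonian property I would leverage the one already known for $(M_2, \mu_2)$ acting on $\g^*$. For each $y \in \m_0$, the linear function $F_y \colon \xi \mapsto \xi(y) - \chi_2(y)$ on $\g^*$ generates the infinitesimal coadjoint action by $y$. The same bracket identities as above show that $\{F_y, I_1\} \subseteq I_1$, so $F_y$ descends to an element of $\Poi(\g, f_1) \cong \C[\Slice_1]$, which is by construction $\mu_0^*(y)$. Poisson reduction then transports the diagram~\eqref{equation:hamilton} for $(M_2, \mu_2)$ on $\g^*$ to the analogous diagram for $(M_0, \mu_0)$ on $\Slice_1$, which would complete the proof of the claim. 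The only real subtlety is the bookkeeping: carefully tracking which twist ($\chi_1$ versus $\chi_2$) appears at each step, and systematically exploiting Lemma~\ref{lemma:compatibility} together with the character property of $\chi_2$ on the whole of $\m_2$ to kill the bracket terms that arise when commuting group elements past the moment-map shifts.
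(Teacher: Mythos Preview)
Your proposal is correct and follows essentially the same approach as the paper. The paper phrases the stability of $\mu_1^{-1}(0)$ under $M_0$ slightly more conceptually (noting that $\mu_1$ is $M_2$-equivariant because $M_1$ is normal) and cites \cite[Lemma~5.2.3]{marsden2007reduction} for the fact $y - \Ad(g)y \in \m_1$ rather than expanding the exponential, but your explicit computations via Lemma~\ref{lemma:compatibility} and the character property of $\chi_2$ amount to the same verification; your treatment of the Hamiltonian diagram is simply a spelled-out version of what the paper declares ``not difficult to see''.
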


\begin{proof}
	Let us denote by $[\xi]$ the class of $\xi$ in the quotient ${\mu_1}^{-1}(0)/M_1$. The subgroup $M_{1}$ is normal in $M_{2}$, hence the adjoint action of $M_{2}$ restricts to a well-defined action on $\m_{1}$ and one can deduce that the moment map $\mu_{1} : \g^* \rightarrow (\m_{1})^*$ is $M_{2}$-equivariant. In particular, the action of $M_{0}$ on $\g^*$ restricts to a well-defined action on ${\mu_{1}}^{-1}(0)$. Because of the normality of $M_1$ in $M_2$, the following induced action is well-defined on the quotient: for $g \in M_0$ and $\xi \in {\mu_{1}}^{-1}(0)$, set
	$$ g \cdot [\xi] \defeq [\Ad^*(g) \xi]. $$
	
	The map $\mu_0 : {[\xi]} \mapsto (\xi - \chi_2)|_{\m_0}$ is well-defined because of Lemma \ref{lemma:compatibility} and the following fact \cite[Lemma 5.2.3]{marsden2007reduction}, which is also easy to prove: for all $g \in M_{1}$ and for all $y \in \m_{2}$, one has $ y - \Ad(g) y \in \m_{1}. $
	
	It is not difficult to see that the map is $M_0$-equivariant and its co-homomorphism makes commute an analogue of the diagram (\ref{equation:hamilton}), so the action is Hamiltonian.
\end{proof}

Set
$$ \widetilde{\mu_0} :  \Slice_1  \longrightarrow (\m_0)^*, \quad \xi \longmapsto (\xi - \chi_2)|_{\m_0}. $$
Clearly the following triangle commutes:
$$ \begin{tikzcd}[row sep = small]
	\Slice_1 \arrow[dd, "\sim"'] \arrow[rd, "\widetilde{\mu_0}"] & \\
	& (\m_0)^* \\
	{\mu_1}^{-1}(0) / M_1 \arrow[ru, "\mu_0"'] & 
\end{tikzcd} $$

As a consequence of Proposition \ref{proposition:nilpotent-order},  $\chi_2 \in \Slice_1$ and by definition, $\chi_2 \in \widetilde{\mu_0}^{-1}(0)$. An easy computation gives the following claim.

\begin{claim}\label{claim:stage-moment}
	The subvariety $\widetilde{\mu_0}^{-1}(0)$ is the affine space $ \chi_2 + \Ker \ad^*(e_1) \cap {\m_0}^{\perp}. $
\end{claim}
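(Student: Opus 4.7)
The plan is to unwind the definitions and reduce everything to the single non-trivial input, namely that $\chi_2$ itself lies in the Slodowy slice $\Slice_1$. Once this is known, the identification becomes a direct change of variables centred at $\chi_2$ instead of $\chi_1$.

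First, I would verify that $\chi_2 \in \Slice_1$. Recall that under the identification $\Phi \colon \g \xrightarrow{\sim} \g^*$ coming from the invariant form, we have $\Ker \ad^*(e_1) = \Phi(\Ker \ad(e_1))$, since the form is invariant (the sign introduced by transposition is irrelevant for the kernel). By Proposition \ref{proposition:nilpotent-order}, $[f_0, e_1] = 0$, so $\chi_0 \defeq \Phi(f_0)$ belongs to $\Ker \ad^*(e_1)$. Since $\chi_2 = \chi_1 + \chi_0$, this gives $\chi_2 \in \chi_1 + \Ker \ad^*(e_1) = \Slice_1$.

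Next, I would compute both inclusions. By definition, $\xi \in \widetilde{\mu_0}^{-1}(0)$ if and only if $\xi \in \Slice_1$ and $(\xi - \chi_2)|_{\m_0} = 0$, i.e.\ $\xi - \chi_2 \in {\m_0}^{\perp}$. Writing $\zeta \defeq \xi - \chi_2$, the condition $\xi \in \Slice_1 = \chi_1 + \Ker \ad^*(e_1)$ becomes $\zeta + \chi_0 \in \Ker \ad^*(e_1)$, and because $\chi_0 \in \Ker \ad^*(e_1)$ by the previous step, this is equivalent to $\zeta \in \Ker \ad^*(e_1)$. Combining the two conditions, $\xi \in \widetilde{\mu_0}^{-1}(0)$ if and only if $\zeta \in \Ker \ad^*(e_1) \cap {\m_0}^{\perp}$, proving the equality of affine spaces.

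There is essentially no obstacle here: the only point that is not tautological is the fact that $\chi_2$ belongs to $\Slice_1$, and this has already been arranged by Proposition \ref{proposition:nilpotent-order} (which is why the triple $(e_1,h_1,f_1)$ was chosen compatibly with the grading $\Gamma^2$ via Lemma \ref{lemma:nice-triple}). After that, the claim is purely a translation of an affine subspace.
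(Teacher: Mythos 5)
Your proof is correct and follows the same route as the paper: the paper likewise deduces $\chi_2 \in \Slice_1$ from Proposition \ref{proposition:nilpotent-order} (via $[f_0,e_1]=0$) and then treats the identification of $\widetilde{\mu_0}^{-1}(0)$ with $\chi_2 + \Ker \ad^*(e_1) \cap {\m_0}^{\perp}$ as the resulting easy computation, which you have simply written out via the translation $\zeta = \xi - \chi_2$.
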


According to Lemma \ref{lemma:f1-m0}, we have $\m_0 \subseteq \Ker \ad(f_1)$. Since $(\cdot \mid \cdot)$ induces a nondegenerate pairing $ \Ker \ad(f_1) \times \Ker \ad(e_1) \rightarrow \C$,
we deduce that the differential
$$\mathrm{d} \widetilde{\mu_0} :  \Ker \ad^*(e_1)  \longrightarrow (\m_0)^*, \quad \xi \longmapsto \xi|_{\m_0} $$
is surjective.

\begin{claim} \label{claim:isomorphism}
	The isomorphism $\phi_1 : M_{1} \times \Slice_{1} \rightarrow  {\mu_1}^{-1}(0) $ restricts to an isomorphism 
	$$ M_{1} \times \widetilde{\mu_0}^{-1}(0) \cong {\mu_{2}}^{-1}(0). $$
	Hence, the inclusion ${\mu_{2}}^{-1}(0) \hookrightarrow {\mu_{1}}^{-1}(0)$ induces an isomorphism:
	$$  {\mu_{2}}^{-1}(0) / M_1  \cong {\mu_0}^{-1}(0). $$
\end{claim}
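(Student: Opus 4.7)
The plan is to leverage the Gan--Ginzburg isomorphism $\phi_1 : M_1 \times \Slice_1 \overset{\sim}{\to} \mu_1^{-1}(0)$ from Theorem \ref{theorem:slice-iso} and restrict it to appropriate closed subvarieties on both sides. Since $\widetilde{\mu_0}^{-1}(0)$ is closed in $\Slice_1$ (by Claim \ref{claim:stage-moment}) and $\mu_2^{-1}(0) \subseteq \mu_1^{-1}(0)$ is closed (by Lemma \ref{lemma:compatibility}), once I check that $\phi_1$ maps $M_1 \times \widetilde{\mu_0}^{-1}(0)$ bijectively onto $\mu_2^{-1}(0)$, the restriction is automatically an $M_1$-equivariant isomorphism of varieties, with inverse obtained from $\phi_1^{-1}$.

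For the forward containment, I first verify that $\widetilde{\mu_0}^{-1}(0) \subseteq \mu_2^{-1}(0)$: an element $\xi \in \widetilde{\mu_0}^{-1}(0) \subseteq \Slice_1$ satisfies $\xi|_{\m_1} = \chi_1|_{\m_1} = \chi_2|_{\m_1}$ by Lemma \ref{lemma:compatibility}, and $\xi|_{\m_0} = \chi_2|_{\m_0}$ by Claim \ref{claim:stage-moment}, so the direct sum decomposition $\m_2 = \m_1 \oplus \m_0$ forces $(\xi - \chi_2)|_{\m_2} = 0$, i.e.\ $\xi \in \mu_2^{-1}(0)$. The $M_2$-equivariance of $\mu_2$, applied to elements of $M_1 \subseteq M_2$, then shows the entire $M_1$-orbit of $\xi$ stays inside $\mu_2^{-1}(0)$. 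Conversely, given $\eta \in \mu_2^{-1}(0) \subseteq \mu_1^{-1}(0)$, Theorem \ref{theorem:slice-iso} produces a unique $(g, \xi) \in M_1 \times \Slice_1$ with $\eta = \Ad^*(g)\xi$; by the same equivariance $\xi = \Ad^*(g^{-1})\eta$ again lies in $\mu_2^{-1}(0)$, so $\xi|_{\m_0} = \chi_2|_{\m_0}$, identifying $\xi$ as an element of $\widetilde{\mu_0}^{-1}(0)$.

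The second isomorphism follows formally by quotienting the first by the free $M_1$-action (acting by left multiplication on the $M_1$-factor), yielding $\mu_2^{-1}(0)/M_1 \cong \widetilde{\mu_0}^{-1}(0)$. Under the isomorphism $\Slice_1 \cong \mu_1^{-1}(0)/M_1$ from Theorem \ref{theorem:slice-iso}, the subvariety $\widetilde{\mu_0}^{-1}(0)$ transports to $\mu_0^{-1}(0)$ by the commutative triangle displayed just before Claim \ref{claim:stage-moment}. I anticipate no serious obstacle here; the proof is essentially bookkeeping, with the key inputs being the decomposition $\m_2 = \m_1 \oplus \m_0$ and the inclusion $M_1 \subseteq M_2$, both immediate from the hypotheses of Theorem \ref{theorem:freeness}.
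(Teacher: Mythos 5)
Your proof is correct, but it departs from the paper at the key step. You prove both containments elementwise: the forward inclusion $\widetilde{\mu_0}^{-1}(0) \subseteq {\mu_2}^{-1}(0)$ exactly as in the paper (via $\m_2 = \m_1 \oplus \m_0$ and Lemma \ref{lemma:compatibility}), and then the reverse inclusion directly, by taking $\eta \in {\mu_2}^{-1}(0)$, writing $\eta = \Ad^*(g)\xi$ with $(g,\xi) \in M_1 \times \Slice_1$ via Theorem \ref{theorem:slice-iso}, and using the $M_2$-equivariance of $\mu_2$ (hence $M_1$-stability of ${\mu_2}^{-1}(0)$) to conclude $\xi \in \Slice_1 \cap {\mu_2}^{-1}(0) = \widetilde{\mu_0}^{-1}(0)$. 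The paper instead proves only the forward inclusion and then forces equality by a dimension count: both sides are smooth irreducible closed subvarieties, and the rank formula for $\mathrm{d}\widetilde{\mu_0}$ (whose surjectivity rests on Lemma \ref{lemma:f1-m0} and the nondegenerate pairing $\Ker\ad(f_1) \times \Ker\ad(e_1) \to \C$) together with $\dim\g - \dim\Ker\ad^*(e_1) = 2\dim\m_1$ shows the dimensions agree. Your route is more elementary and bypasses the surjectivity of $\mathrm{d}\widetilde{\mu_0}$ entirely, since equality of the two reduced closed subvarieties follows from the set-theoretic bijection under the ambient isomorphism $\phi_1$; the paper's route, while heavier, records the dimension and transversality data explicitly (this is what feeds the remark after the claim giving the decomposition $\ad^*(\m_1)\chi_2 \oplus \left(\Ker\ad^*(e_1) \cap {\m_0}^{\perp}\right) = {\m_2}^{\perp}$) and stays closer to the Gan--Ginzburg template. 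Your treatment of the second assertion, quotienting the equivariant isomorphism by the free $M_1$-action and transporting $\widetilde{\mu_0}^{-1}(0)$ to ${\mu_0}^{-1}(0)$ through the commutative triangle, is exactly what the paper leaves implicit in ``the last part of the claim follows easily.''
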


\begin{proof}
	We first check that
	$$ \phi_1\left(M_{1} \times \widetilde{\mu_0}^{-1}(0)\right) \subseteq {\mu_{2}}^{-1}(0). $$
	Since ${\mu_2}^{-1}(0)$ is $M_1$-stable, it is enough to check that $\widetilde{\mu_0}^{-1}(0) \subseteq {\mu_{2}}^{-1}(0)$. Let $\xi \in \widetilde{\mu_0}^{-1}(0)$, $y_1\in \m_1$ and $y_0 \in \m_0$:
	$$ \mu_2(\xi)(y_1 + y_0)  = \mu_2(\xi)(y_1) + \mu_2(\xi)(y_0). $$
	We have $\mu_2(\xi)(y_1) = \mu_1(\xi)(y_1)$ and it is zero because $\xi \in {\mu_{1}}^{-1}(0)$. We have also $\mu_2(\xi)(y_0) = \widetilde{\mu_0}(\xi)(y_0)$ and it is zero because $\xi \in \widetilde{\mu_0}^{-1}(0)$.
	Hence $\mu_2(\xi)(y_1 + y_0)  = 0$ and the restriction is well-defined.
	
	We have an inclusion of two smooth irreducible closed varieties: 
	$$\phi_1\left(M_{1} \times \widetilde{\mu_0}^{-1}(0)\right) \subseteq {\mu_{2}}^{-1}(0).$$ 
	The rank formula applied to $\mathrm{d}\widetilde{\mu_0}$ gives
	$$ \dim \Ker \ad^*(e_1) = \dim(\Ker \ad^*(e_1)\cap {\m_0}^{\perp}) + \dim \m_0, $$
	and we know that $\dim \g - \dim \Ker \ad^*(e_1) = 2 \dim \m_1$, whence $\phi_1\left(M_{1} \times \widetilde{\mu_0}^{-1}(0)\right)$ and ${\mu_{2}}^{-1}(0)$ have the same dimension, so they coincide. Whence, $ M_{1} \times \widetilde{\mu_0}^{-1}(0) \cong {\mu_{2}}^{-1}(0)$. The last part of the claim follows easily.
\end{proof}

\begin{remark}
	The differential of $\phi_1$ is an isomorphism, so we can deduce the following decomposition:
	$$ \ad^*(\m_1) \chi_2 \oplus \left(\Ker \ad^*(e_1)\cap {\m_0}^{\perp}\right) = {\m_2}^{\perp}. $$
\end{remark}

Let us consider the $M_{1}$-equivariant isomorphism:
\begin{alignat*}{2}
	\phi_1 : & \ & M_{1} \times M_0 \times \Slice_{2} & \overset{\sim}{\longrightarrow} {\mu_{2}}^{-1}(0)  \\
	& & (g, g',\xi) & \longmapsto \Ad^*(gg') \xi.
\end{alignat*}
Taking quotients by $M_1$ we get the following claim. 

\begin{claim} \label{claim:freeness}
	The map
	$$\phi_0 :  M_0 \times \Slice_{2} \longrightarrow {\mu_0}^{-1}(0), \quad (g, \xi) \longmapsto g \cdot [\xi] $$
	is an isomorphism of $M_0$-varieties, where $M_0 \times \Slice_{2}$ is equipped with the action given by left multiplication on $M_0$. 
\end{claim}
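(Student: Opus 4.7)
The plan is to deduce Claim \ref{claim:freeness} essentially for free from the $M_1$-equivariant isomorphism
$$ \phi_1 : M_1 \times M_0 \times \Slice_2 \overset{\sim}{\longrightarrow} {\mu_2}^{-1}(0), \qquad (g, g', \xi) \longmapsto \Ad^*(gg')\xi $$
displayed just before the statement, by quotienting out the free $M_1$-action on both sides. The only preliminary one has to be comfortable with is that this map really is a variety isomorphism: the multiplication map $M_1 \times M_0 \to M_2$ is a bijection (since $\m_2 = \m_1 \oplus \m_0$, the subgroup $M_1$ is normal in $M_2$, and $M_1 \cap M_0 = \{e\}$), and an isomorphism of algebraic varieties because all three groups are unipotent. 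Combining this with $\phi_2 : M_2 \times \Slice_2 \cong {\mu_2}^{-1}(0)$ from Theorem \ref{theorem:slice-iso} yields the above isomorphism.

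Next I would equip $M_1 \times M_0 \times \Slice_2$ with the $M_1$-action given by left multiplication on the first factor; this action is obviously free, with geometric quotient $M_0 \times \Slice_2$. On the right-hand side, Claim \ref{claim:isomorphism} provides a canonical isomorphism $ {\mu_2}^{-1}(0)/M_1 \cong {\mu_0}^{-1}(0). $ Since $\phi_1$ is $M_1$-equivariant (by definition of its action on the source and the $\Ad^*$-action on the target), passing to quotients gives an isomorphism
$$ M_0 \times \Slice_2 \overset{\sim}{\longrightarrow} {\mu_0}^{-1}(0), \qquad (g', \xi) \longmapsto [\Ad^*(g')\xi] = g' \cdot [\xi], $$
which is exactly the map $\phi_0$ of the statement.

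Finally I would verify $M_0$-equivariance: the source carries the action of $M_0$ by left multiplication on its first factor, and the target carries the $M_0$-action on ${\mu_0}^{-1}(0) \subseteq {\mu_1}^{-1}(0)/M_1$ constructed earlier; the formula $h \cdot (g', \xi) = (hg', \xi) \mapsto [\Ad^*(hg')\xi] = h \cdot [\Ad^*(g')\xi]$ makes equivariance immediate.

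The only step that could be called an obstacle is the identification of the $M_1$-quotient of the source with $M_0 \times \Slice_2$, which hinges on the variety-level isomorphism $M_1 \times M_0 \cong M_2$; once this is in hand (which it is, by the unipotency argument above), the rest is a mechanical quotient computation, and the claim follows without any further Poisson-geometric input.
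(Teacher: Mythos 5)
Your argument is correct and is exactly the paper's route: the paper displays the same $M_1$-equivariant map $(g,g',\xi)\mapsto \Ad^*(gg')\xi$ on $M_1\times M_0\times \Slice_2$ (obtained from Theorem \ref{theorem:slice-iso} for $f_2$ together with the variety isomorphism $M_1\times M_0\cong M_2$) and simply says ``taking quotients by $M_1$'' using Claim \ref{claim:isomorphism}, which is precisely the quotient computation and $M_0$-equivariance check you spell out. No discrepancy to report.
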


This achieves the proof of Theorem~\ref{theorem:freeness}. We turn to the proof of Theorem~\ref{theorem:geometrical-stage-reduction}. As a consequence of the previous claims, the quotient ${\mu_0}^{-1}(0) / M_0$ is a variety isomorphic to $\Slice_2 \cong {\mu_{2}}^{-1}(0) / M_2$.  Its coordinate ring is
$$ \C\left[{\mu_0}^{-1}(0) / M_0\right] = \left(\Poi(\g, f_1) / I_0\right)^{\ad(\m_0)}, $$
where $I_0$ the ideal of $\Poi(\g, f_1)$ spanned by $(y - \chi_2(y)) \bmod I_1$, $y \in \m_0$. Hence it inherits a Poisson structure from $\Poi(\g, f_1)$ in the same way as Formula (\ref{equation:reduced-poisson}).

We have established an isomorphism
$${\mu_2}^{-1}(0) / M_2 \overset{\sim}{\longrightarrow} {\mu_0}^{-1}(0) / M_0 $$
and its cohomomorphism is given by the following map:
\begin{alignat*}{2}
	\Psi : & \ & \left(\Poi(\g, f_1)/I_0\right)^{\ad(\m_0)} & \longrightarrow  \Poi(\g, f_2) \\
	& & (X \bmod I_1) \bmod I_0 & \longmapsto  X \bmod I_2
\end{alignat*}
It clearly respects the Poisson structures given by Formula (\ref{equation:reduced-poisson}). The proof of Theorem \ref{theorem:geometrical-stage-reduction} is now complete.

\section{Hamiltonian reduction for finite W-algebras}
\label{section:quantum-reduction}

In this section, we recall the construction of finite W-algebras and their relation with the Slodowy slices. This was first studied by Kostant for a regular nilpotent element \cite{kostant1978whittaker}, and then by Premet for any nilpotent element \cite{premet2002slices}. Gan and Ginsburg proved that finite W-algebras are a quantization of classical W-algebras \cite{gan2002quantization} in the case of Dynkin gradings and over the complex number field. Doing so, they proved that the Poisson structure exhbited by Premet in \cite[Proposition 6.5]{premet2002slices} and the one defined by Hamiltonian reduction are the same. This was generalized to the setting of good gradings by Brundan and Goodwin in \cite{brundan2007good}, and by Sadaka to \textit{admissible gradings} \cite{sadaka2016paires}. 

Then, we state Main Theorem \ref{main:quantum-stage-reduction}, which is a reduction by stages for finite W-algebras. We introduce Kazhdan filtrations on finite W-algebras. Using these tools, we can deduce that the reduction by stages for finite W-algebras follows from the reduction by stages for Slodowy slices (see Theorem \ref{theorem:gr-two-stages}).

\subsection{Finite W-algebras} 

Let $\Univ(\g)$ be the universal enveloping algebra of the simple Lie algebra $\g$. Let $f$ a nilpotent element of $\g$ , $\Gamma$ be a good grading for $f$ and $q_{\Gamma}$ be the corresponding semisimple element. The grading $\Gamma$ induces a filtration on $\Univ(\g)$, the \textit{Kazhdan filtration}, which is defined by:
$$ \Univ_n(\g) \defeq \sum_{2i - j \leqslant n} \{ x_1 \cdots x_i \in \Univ(\g) ~ | ~ x_1, \dots, x_i \in \g, ~  [q_{\Gamma}, x_1 \cdots x_i] = j x_1 \cdots x_i\},$$
for $n \in \Z$. 
The natural embedding $\g \hookrightarrow \Univ(\g)$ induces for all $j \in \Z$ an embedding $\g_j \hookrightarrow \Univ_{2 - j}(\g). $

For all $j, k \in \Z$, $[\g_j, \g_k] \subseteq \g_{j+k}$ and $\g$ spans $\Univ(\g)$ as an algebra. Hence, $\Univ(\g)$ is \textit{almost commutative} for the Kazhdan filtration, that is to say: for all $ m,n \in \Z$,
$$  [\Univ_m(\g), \Univ_n(\g)] \subseteq \Univ_{m + n -2}(\g). $$
Hence the \textit{associated graded algebra} 
$$\gr_{\bullet} \Univ(\g) \defeq \bigoplus_{n \in \Z} \Univ_n(\g) / \Univ_{n-1}(\g)$$ 
is a commutative algebra. The image of $X \in \Univ_n(\g)$ in the quotient $\Univ_n(\g) / \Univ_{n-1}(\g)$ is denoted by $[X]_n$. Moreover, one can define a Poisson structure on $\gr_{\bullet} \Univ(\g)$ with the following Poisson bracket:
$$ \left\{[X]_m, [Y]_n\right\} \defeq [XY - YX]_{m + n -2}, $$
for $X \in \Univ_m(\g), Y \in \Univ_n(\g)$. 

The grading $\Gamma$ induces a $\C^{\times}$-action on $\g^*$. Set $ \gamma : \C^{\times} \rightarrow \Aut_{\C}(\g)$ the action defined for all $t \in \C^{\times}$, $i \in \Z$ and all $\xi \in \Phi(\g_i)$ by: $\rho(t) \xi \defeq t^{2+i} \xi.  $
The \textit{Kazhdan grading} on $\Sym(\g) = \C[\g^*]$ is defined as: for all $n \in \Z$, $\Sym_n(\g)$ is the set of $X \in \Sym(\g)$ such that for all $t \in \C^{\times}$, $ \rho^*(t) X = t^{-n} X. $ It is easy to see that
$$\Sym_n(\g) = \bigoplus_{2i - j = n} \{ x_1 \cdots x_i \in \Sym(\g) ~ | ~ x_1, \dots, x_i \in \g, ~  [q_{\Gamma}, x_1 \cdots x_i] = j x_1 \cdots x_i \}.$$

Let $\m$ be a good algebra associated to $\Gamma$ and $f$,  and $\mu : \g^* \rightarrow \m^*$ be the $\chi$-twisted restriction map. The linear form $\chi$ is  fixed by $\rho$. As a consequence, the Slodowy slice $\Slice_{f}$ and the fiber $\mu^{-1}(0)$ are $\rho$-stable affine subspaces of $\g^*$. So, there is an induced Kazhdan grading on their coordinate rings. It appears that $\Univ(\g)$ is a \textit{quantization} of the Poisson algebra $\Sym(\g) $ in the sense of the following proposition.

\begin{proposition}[\cite{gan2002quantization}] \label{proposition:quantization-Sg}
	The Lie algebra inclusion $\g \hookrightarrow \gr\Univ(\g)$, given by the mapping $\g_j \ni x \mapsto  [x]_{2 - j}$,
	induces a graded Poisson algebra isomorphism
	$$ \Sym_{\bullet}(\g) \cong \gr_{\bullet}\Univ(\g). $$
\end{proposition}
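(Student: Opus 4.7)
The plan is to realize the stated isomorphism as a Kazhdan-refined version of the Poincar\'e--Birkhoff--Witt theorem, compatible with the Poisson structures. I will construct the map, check it is a graded homomorphism, then establish bijectivity by reducing to classical PBW, and finally verify the Poisson compatibility on generators.

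First, I would fix a basis $(x_\alpha)_{\alpha \in A}$ of $\g$ consisting of $\Gamma$-homogeneous elements, writing $j(\alpha)$ for the $\Gamma$-degree of $x_\alpha$. By the universal property of $\Sym(\g)$ as a commutative algebra, the assignment $x_\alpha \mapsto [x_\alpha]_{2 - j(\alpha)}$ extends uniquely to an algebra homomorphism $\varphi : \Sym(\g) \to \gr_\bullet \Univ(\g)$; this is well-defined because $\gr_\bullet \Univ(\g)$ is commutative, as already recorded in the preceding paragraph from the almost-commutativity of $\Univ(\g)$ with respect to the Kazhdan filtration. For a monomial $x_{\alpha_1} \cdots x_{\alpha_i}$, the image lies in Kazhdan degree $\sum_k (2 - j(\alpha_k)) = 2i - \sum_k j(\alpha_k)$, which matches exactly the definition of $\Sym_n(\g)$ recalled just before the proposition. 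Hence $\varphi$ is graded.

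For bijectivity, I would fix a total order on $A$ and compare PBW bases. The ordered monomials $x_{\alpha_1} \cdots x_{\alpha_i}$ with $\alpha_1 \leq \cdots \leq \alpha_i$ form a basis of $\Sym(\g)$, and by the classical PBW theorem the corresponding ordered monomials form a basis of $\Univ(\g)$. Each such monomial in $\Univ(\g)$ lies in $\Univ_n(\g)$ for $n = 2i - \sum_k j(\alpha_k)$, and reordering any product introduces commutators which, by the almost-commutativity, strictly lower the Kazhdan degree. Consequently the images $[x_{\alpha_1} \cdots x_{\alpha_i}]_n$ of the ordered PBW monomials form a basis of $\gr_\bullet \Univ(\g)$ as a graded vector space, and $\varphi$ sends the $\Sym$-basis of ordered monomials to this basis. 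Thus $\varphi$ is a graded linear isomorphism, hence a graded algebra isomorphism.

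It remains to check that $\varphi$ intertwines the Kirillov--Kostant bracket on $\Sym(\g)$ with the bracket on $\gr_\bullet \Univ(\g)$ induced by the commutator. Both brackets are biderivations, so it suffices to test on the generators $x \in \g_j$, $y \in \g_k$. On the $\Sym(\g)$-side, $\{x,y\} = [x,y]_{\g} \in \g_{j+k}$, which maps to $[[x,y]_{\g}]_{2 - (j+k)}$ in $\gr_\bullet \Univ(\g)$. On the $\gr$-side,
\[
\bigl\{[x]_{2-j}, [y]_{2-k}\bigr\} = [xy - yx]_{(2-j)+(2-k)-2} = [[x,y]_{\g}]_{2-(j+k)},
\]
using that $[x,y]_{\g} \in \g_{j+k} \subset \Univ_{2-(j+k)}(\g)$. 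The two values agree, so $\varphi$ is Poisson.

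The only genuinely nontrivial point is the graded PBW argument, i.e.\ that reordering strictly drops the Kazhdan degree; this is immediate from $[\g_j, \g_k] \subseteq \g_{j+k}$ combined with the fact that passing from a product of $i$ generators to one of $i-1$ generators of total $\Gamma$-weight $j+k$ changes the Kazhdan weight from $2i - (j+k)$ to $2(i-1) - (j+k)$, a drop of $2$. Everything else is routine bookkeeping with gradings and the universal property.
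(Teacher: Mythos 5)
Your proposal is correct. The paper does not prove this statement at all; it is quoted from Gan--Ginzburg, and your argument is exactly the standard one behind that citation: a Kazhdan-graded refinement of PBW (reordering drops the Kazhdan degree by $2$ because $[\g_j,\g_k]\subseteq\g_{j+k}$ while the number of factors decreases by one), followed by the routine check of the Poisson bracket on generators of $\Sym(\g)$ and $\gr_{\bullet}\Univ(\g)$. The only place you compress is the step from ``reordering strictly lowers degree'' to ``the ordered monomial classes form a basis of $\gr_{\bullet}\Univ(\g)$'': strictly one should note that this rewriting shows $\Univ_n(\g)$ is spanned by ordered monomials of Kazhdan degree at most $n$, and that linear independence in $\gr_n$ then follows from PBW independence in $\Univ(\g)$; both are immediate from what you wrote, so this is a matter of exposition rather than a gap.
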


The restriction $\chi|_{\m}$ is a character, hence we can consider the associated one-dimensional representation of $\m$, denoted by $\C_{\chi}$. Then, we can consider the induced $\g$-module, called the \textit{generalized Gelfand-Graev module} associated to $\g$ and $f$:
$$ Q_f \defeq \Univ(\g) / \widehat{I} \cong \Univ(\g) \otimes_{\Univ(\m)} \C_{\chi}, $$
where $\widehat{I}$ is the left $\Univ(\g)$ ideal spanned by $y - \chi(y) $, $ y \in \m$. The Kazhdan filtration on $Q_f$ is defined thanks to the natural projection $\Univ(\g) \twoheadrightarrow Q_f$:
$$ \left(Q_{f}\right)_n \defeq \left\{ X\bmod \widehat{I} ~  : ~  X \in \Univ_n(\g)\right\}. $$

The adjoint action of $\m$ on $\Univ(\g)$ descends to $Q_f$ because $\widehat{I}$ is $\ad(\m)$-stable. Let ${Q_f}^{\ad(\m)}$ be the subspace of $\ad(\m)$-invariant elements: it is the set of all $X \bmod \widehat{I} \in Q_f$ such that for all $y \in \m$, $ [y - \chi(y), X] = [y, X] \in \widehat{I}.$ The subspace ${Q_f}^{\ad(\m)}$ is an algebra for the product induced by $\Univ(\g)$:
$$ \left(X \bmod \widehat{I}\right) \cdot \left(Y \bmod \widehat{I}\right) \defeq X Y \bmod \widehat{I} $$
for all $X + \widehat{I}, Y + \widehat{I} \in {Q_f}^{\ad(\m)}$. 

\begin{definition}[\cite{premet2002slices}]
	The \textbf{finite W-algebra} associated to $\g$ and $f$ is defined as the associated $\C$-algebra:
	$$ \Univ(\g, f) \defeq {Q_f}^{\ad(\m)} = \left(\Univ(\g) / \widehat{I}\right)^{\ad(\m)}. $$
\end{definition}

The Kazhdan filtration on $\Univ(\g, f)$ is defined by intersection:
$$ \Univ_n(\g, f) \defeq \left(Q_{f}\right)_n \cap \Univ(\g, f). $$
It is possible to prove that Hamiltonian reduction and quantization are compatible in the sense of the following theorem.

\begin{theorem}[{\cite[Theorem 4.1]{gan2002quantization}}] \label{theorem:quantization-slodowy}
	There is a graded algebra isomorphism
	$$ \gr_{\bullet} Q_f \cong (\Sym(\g)/I)_{\bullet} $$
	which make the following diagram commute.
	$$ \begin{tikzcd}
		\Sym_{\bullet}(\g) \arrow[r, "\sim"] \arrow[d, two heads] & \gr_{\bullet} \Univ(\g) \arrow[d, two heads] \\
		(\Sym(\g)/I)_{\bullet} \arrow[r, "\sim"] & \gr_{\bullet} Q_f \\
		\Poi_{\bullet}(\g, f)\ \arrow[r, "\sim"] \arrow[u, hook] & \gr_{\bullet} \Univ(\g, f) \arrow[u, hook]
	\end{tikzcd} $$
	Moreover, the isomorphism $ \Poi_{\bullet}(\g, f) \cong \gr_{\bullet} \Univ(\g, f) $ is compatible with the Poisson structures.
\end{theorem}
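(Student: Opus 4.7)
My approach is to exploit the PBW theorem together with the Kazhdan filtration. I would start by fixing a graded vector space complement $V$ of $\m$ in $\g$, so that $\g = V \oplus \m$ as graded vector spaces. Choosing a homogeneous basis adapted to this decomposition and applying PBW, every element of $\Univ(\g)$ is uniquely a sum of ordered monomials $v \cdot w$ with $v$ a monomial in $V$ and $w$ one in $\m$. Using the congruences $y \equiv \chi(y) \pmod{\widehat{I}}$ for $y \in \m$, each such element reduces modulo $\widehat{I}$ to an element of $\Sym(V)$, yielding a linear isomorphism $\Sym(V) \overset{\sim}{\to} Q_f$. Since $V$ is graded, this is a strict filtered isomorphism for the Kazhdan filtration, and passing to the associated graded gives $\gr_\bullet Q_f \cong \Sym_\bullet(V)$. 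On the classical side the same argument yields $\Sym(\g)/I \cong \Sym(V)$, and these identifications together with Proposition~\ref{proposition:quantization-Sg} make the upper square of the diagram commute.

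The main obstacle is the bottom isomorphism $\Poi_\bullet(\g, f) \cong \gr_\bullet \Univ(\g, f)$, because taking $\ad(\m)$-invariants does not a priori commute with passage to $\gr_\bullet$. The natural map is injective, since the symbol of an $\ad(\m)$-invariant is $\ad(\m)$-invariant; the content is surjectivity, namely that every class in $\Poi_\bullet(\g, f)$ admits a quantum lift in $\Univ(\g, f)$. I would approach this in either of two ways. The first is via Premet's construction in \cite{premet2002slices}, which produces explicit elements $\Theta_1, \ldots, \Theta_r \in \Univ(\g, f)$ whose Kazhdan symbols form a polynomial generating set of $\Poi(\g, f) \cong \C[\Slice_f]$, together with a basis theorem stating that ordered monomials in the $\Theta_i$ form a basis of $\Univ(\g, f)$; surjectivity at the level of $\gr_\bullet$ then follows at once. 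The second is more geometric: by Theorem~\ref{theorem:slice-iso} we have $\mu^{-1}(0) \cong M \times \Slice_f$ as $M$-varieties, so $\gr_\bullet Q_f \cong \C[\mu^{-1}(0)] \cong \C[M] \otimes \C[\Slice_f]$ has vanishing higher $M$-cohomology. By a standard spectral-sequence argument for filtered $\m$-modules whose graded quotient has vanishing higher Lie-algebra cohomology, taking invariants commutes with $\gr_\bullet$, giving surjectivity.

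Finally, compatibility of the Poisson structures is essentially automatic. The commutator on $\gr_\bullet \Univ(\g)$ recovers the Kirillov--Kostant bracket on $\Sym_\bullet(\g)$ by Proposition~\ref{proposition:quantization-Sg}. The induced bracket on $\gr_\bullet \Univ(\g, f)$ is computed by lifting representatives to $\Univ(\g)$, taking $[\cdot, \cdot]$, and projecting to $Q_f$; the induced bracket on $\Poi_\bullet(\g, f)$, defined by formula~\eqref{equation:reduced-poisson}, is computed by the analogous procedure with $\{\cdot, \cdot\}$ in $\Sym(\g)$ and quotient by $I$. The compatibility of commutator and Kirillov--Kostant bracket therefore forces the two reduced brackets to agree under the isomorphism just constructed, and commutativity of the lower square of the diagram is immediate from the construction.
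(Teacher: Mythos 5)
Your proposal is correct and follows essentially the same route as the paper, which does not reprove this statement but quotes it from \cite[Theorem~4.1]{gan2002quantization}: the upper square via a PBW/Kazhdan-filtration identification $\gr Q_f \cong \Sym(\g)/I$, and the bottom isomorphism either from Premet's PBW theorem \cite{premet2002slices} or from the Chevalley--Eilenberg spectral sequence collapsing because Theorem~\ref{theorem:slice-iso} gives $\C[\mu^{-1}(0)] \cong \C[M]\otimes\C[\Slice_f]$ with trivial higher de Rham cohomology of the unipotent group --- exactly the machinery the paper itself redeploys in Propositions~\ref{proposition:blue-diagram}, \ref{proposition:green-diagram} and \ref{proposition:purple-diagram}. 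The only hypothesis you leave implicit is that the Kazhdan filtration on $Q_f$ is non-negative and exhaustive, which is what guarantees convergence of the spectral sequence (the paper checks the analogous claim for $Q_0$), but this is a routine verification.
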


\subsection{Satement of reduction by stages} \label{subsec:quantum-stage}

For $i=1,2$, let $\Gamma^i : \g = \bigoplus_{j \in \Z} \g^{(i)}_j$ be a good grading for a nilpotent element $f_i$, set $q_i \defeq q_{\Gamma^i}$ the corresponding semisimple element inside a common Cartan subalgebra $\h$. Let $Q_i = Q_{f_i}$ be the associated generalized Gelfand-Graev module and set the finite W-algebra
$$ \Univ(\g, f_i) = \left(\Univ(\g) / \widehat{I_i}\right)^{\ad(\m_i)}, $$
where $\widehat{I_i}$ is the left $\Univ(\g)$-ideal spanned by $y - \chi_i(y)$, $y \in \m_i$. Assuming a decomposition $$M_2 = M_1 \rtimes M_0, $$ we would like to perform a quantum version of the reduction by stages.

\begin{lemma} \label{lemma:m0-embedding}
	Assume that: $\m_1$ is an ideal of $\m_2$, there is a nilpotent subalgebra $\m_0 \subseteq \m_2$ such that $\m_2 = \m_1 \oplus \m_0$ and $f_2 - f_1 \in \g^{(1)}_0$. Then the map
	$$ \m_0 \longrightarrow \Univ(\g, f_1), \quad y \longmapsto y \bmod \widehat{I_1} $$ 
	is an embedding of Lie algebras.
\end{lemma}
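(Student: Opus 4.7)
The plan is to verify three properties of the map $\iota : \m_0 \to Q_{f_1}/\widehat{I_1}$, $y \mapsto y \bmod \widehat{I_1}$: (i) its image is contained in the subspace of $\ad(\m_1)$-invariants $\Univ(\g, f_1) = Q_{f_1}^{\ad(\m_1)}$; (ii) it respects Lie brackets; and (iii) it is injective. The first point is the only one requiring the compatibility hypotheses of the lemma; the other two are essentially formal, consequences of the PBW theorem and the universal construction of $\Univ(\g)$.

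For (i), fix $y \in \m_0$ and $z \in \m_1$. By definition of $\ad(\m_1)$-invariance in $Q_{f_1}$, I need to check that $[z, y] \in \widehat{I_1}$. Since $\m_1$ is an ideal of $\m_2$ and $y \in \m_0 \subseteq \m_2$, one has $[z, y] \in \m_1$, so modulo $\widehat{I_1}$ this element reduces to the scalar $\chi_1([z, y])$. The hypothesis $f_2 - f_1 \in \g^{(1)}_0$ allows me to invoke Lemma~\ref{lemma:compatibility}, giving $\chi_1|_{\m_1} = \chi_2|_{\m_1}$; hence $\chi_1([z, y]) = \chi_2([z, y])$. Since $\chi_2$ is a character on $\m_2$ and both $z$ and $y$ lie in $\m_2$, the right-hand side vanishes, so $[z, y] \in \widehat{I_1}$ as desired.

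For (ii), the commutator in $\Univ(\g, f_1)$ is induced by the commutator in $\Univ(\g)$; in particular, for $y_1, y_2 \in \m_0 \subseteq \g$ one has
\begin{equation*}
[\iota(y_1), \iota(y_2)] = (y_1 y_2 - y_2 y_1) \bmod \widehat{I_1} = [y_1, y_2]_{\g} \bmod \widehat{I_1}.
\end{equation*}
Because $\m_0$ is a Lie subalgebra of $\g$, the bracket $[y_1, y_2]_{\g}$ lies in $\m_0$ and therefore equals $\iota([y_1, y_2]_{\m_0})$, which proves that $\iota$ is a Lie algebra homomorphism.

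For (iii), injectivity follows from PBW. Choose a linear complement $V$ of $\m_2$ in $\g$, so that $\g = V \oplus \m_0 \oplus \m_1$, and order a basis of $\g$ so that vectors in $V \oplus \m_0$ precede those in $\m_1$. By the PBW theorem, ordered monomials form a basis of $\Univ(\g)$, and the left ideal $\widehat{I_1}$ allows one to rewrite each trailing $\m_1$-factor $z$ as the scalar $\chi_1(z)$; consequently $Q_{f_1}$ has a vector space basis consisting of ordered monomials in a basis of $V \oplus \m_0$. In particular, the restriction of the projection $\Univ(\g) \twoheadrightarrow Q_{f_1}$ to $V \oplus \m_0 \hookrightarrow \Univ(\g)$ is injective, so $\iota|_{\m_0}$ is injective. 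The only subtle point is (i), where one must combine the ideal property of $\m_1$ with the compatibility $\chi_1|_{\m_1} = \chi_2|_{\m_1}$; the remaining verifications are routine.
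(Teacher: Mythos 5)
Your proposal is correct and its essential step (i) is exactly the paper's argument: use the ideal property to get $[z,y]\in\m_1$, then Lemma~\ref{lemma:compatibility} to identify $\chi_1([z,y])=\chi_2([z,y])$, and the character property of $\chi_2$ to conclude $[z,y]\in\widehat{I_1}$. The additional verifications of bracket preservation and injectivity are routine points the paper leaves implicit, and your PBW argument for them is fine.
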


\begin{proof}
	We just need to check that for all $ y \in \m_0$ and $z \in \m_1$: $ [z,y] \in \widehat{I_1}. $	We have $[z,y] \in \m_1$ (because $\m_1$ is an ideal) so it make sense to compute:
	$$ \chi_1([z,y]) = \chi_2([z,y]) = [\chi_2(z), \chi_2(y)] = 0 $$
	where the first equality is given by Lemma \ref{lemma:compatibility}. Hence:
	$$ [z,y] = [z,y] - \chi_1([z,y]) \in \widehat{I_1}. $$
	The lemma follows.
\end{proof}

As a consequence of this lemma, we can define $\widehat{I_0}$, the left $\Univ(\g, f_1)$-ideal spanned by $\left(y - \chi_2(y)\right) \bmod \widehat{I_1}$, $ y \in \m_0$. We denote the quotient by $$ Q_0 \defeq \Univ(\g, f_1) / \widehat{I_0}. $$ 
The adjoint action of $\m_0$ on $\Univ(\g, f_1)$ descends to $Q_0$, so it makes sense to consider the subspace of invariants ${Q_0}^{\ad(\m_0)}$. We can define a product law on it in the same way it is defined for finite W-algebras.

\begin{lemma}
	The subspace ${Q_0}^{\ad(\m_0)}$ is an algebra for the product induced by $\Univ(\g, f_1)$:
	$$ \left(X \bmod \widehat{I_{0}}\right) \cdot \left(Y \bmod \widehat{I_{0}}\right) \defeq X Y \bmod \widehat{I_{0}} $$
	for $X \bmod \widehat{I_{0}}, Y \bmod \widehat{I_{0}}\in {Q_0}^{\ad(\m_0)}$.
\end{lemma}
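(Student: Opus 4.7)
The plan is to mimic, at the level of $\Univ(\g,f_1)$ modulo $\widehat{I_0}$, the classical argument that $\Univ(\g,f)=(Q_f)^{\ad(\m)}$ inherits an associative algebra structure. Lemma \ref{lemma:m0-embedding} already furnishes the Lie algebra embedding $\m_0\hookrightarrow\Univ(\g,f_1)$ that makes $\widehat{I_0}$ and the $\ad(\m_0)$-action on $Q_0$ well-defined, so the task reduces to two verifications: (a) the formula $(X\bmod\widehat{I_0})\cdot(Y\bmod\widehat{I_0})\defeq XY\bmod\widehat{I_0}$ does not depend on the choice of invariant representatives $X,Y\in\Univ(\g,f_1)$, and (b) the class $XY\bmod\widehat{I_0}$ is again $\ad(\m_0)$-invariant.

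For (a), suppose $X'=X+Z_1$ and $Y'=Y+Z_2$ with $Z_1,Z_2\in\widehat{I_0}$. Expanding $X'Y'-XY=Z_1Y+XZ_2+Z_1Z_2$, the terms $XZ_2$ and $Z_1Z_2$ lie in $\widehat{I_0}$ immediately since $\widehat{I_0}$ is a \emph{left} ideal of $\Univ(\g,f_1)$. The delicate term is $Z_1Y$, where a generator of $\widehat{I_0}$ is multiplied on the right. Here one exploits the invariance of $Y$: writing $Z_1=\sum_j A_j(y_j-\chi_2(y_j))\bmod\widehat{I_1}$ with $A_j\in\Univ(\g,f_1)$ and $y_j\in\m_0$, one computes
\[
(y_j-\chi_2(y_j))\,Y \;=\; Y\,(y_j-\chi_2(y_j)) + [y_j,Y],
\]
and both summands lie in $\widehat{I_0}$ (the first because $\widehat{I_0}$ is a left ideal, the second by invariance of $Y$). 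Multiplying on the left by $A_j$ and summing, $Z_1Y\in\widehat{I_0}$, and well-definedness follows.

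For (b), using the Leibniz rule inside $\Univ(\g,f_1)$ one has $[y,XY]=[y,X]\,Y+X\,[y,Y]$ for every $y\in\m_0$. The second summand lies in $\widehat{I_0}$ because $[y,Y]\in\widehat{I_0}$ by invariance of $Y$ and $\widehat{I_0}$ is a left ideal. For the first summand, $[y,X]\in\widehat{I_0}$ by invariance of $X$, and then the same right-multiplication-on-an-invariant argument from step (a) shows $[y,X]\,Y\in\widehat{I_0}$. Hence $[y,XY]\in\widehat{I_0}$, so $XY\bmod\widehat{I_0}\in Q_0^{\ad(\m_0)}$. Associativity and the existence of a unit are inherited from $\Univ(\g,f_1)$.

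The only mild obstacle is that $\widehat{I_0}$ is a priori only a left ideal, so right multiplication by a generator $(y-\chi_2(y))\bmod\widehat{I_1}$ has to be handled by commuting $y$ through an invariant element — exactly the move used in the unreduced case when verifying that $\Univ(\g,f)$ is an algebra. No additional hypotheses from the Main Theorems beyond those already used in Lemma \ref{lemma:m0-embedding} enter into the argument.
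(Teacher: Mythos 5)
Your proof is correct and is precisely the standard argument the paper leaves implicit (the lemma is stated there without proof, as the analogue of the product on $\Univ(\g,f)={Q_f}^{\ad(\m)}$): the only real point is that right multiplication of a left-ideal element by an $\ad(\m_0)$-invariant representative stays in $\widehat{I_0}$, which you handle correctly by writing $(y-\chi_2(y))Y = Y(y-\chi_2(y)) + [y,Y]$ and invoking invariance of $Y$ together with the left-ideal property. No gap; your verification of well-definedness, closure under the product, and inheritance of associativity and the unit uses exactly the hypotheses already secured by Lemma \ref{lemma:m0-embedding}.
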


This algebra is denoted by $ \Univ_0 \defeq \left(\Univ(g, f_1) / \widehat{I_0}\right)^{\ad(\m_0)}. $ It is in fact isomorphic to $\Univ(\g, f_2)$.

\begin{theorem} \label{theorem:quantum-stage-reduction}
	If the assumptions of Theorems \ref{theorem:freeness} and \ref{theorem:geometrical-stage-reduction} hold, then the map
	\begin{align*}
		\widehat{\Psi} :  \Univ_0 & \longrightarrow \Univ(\g, f_2) \\
		\left(X \bmod \widehat{I_1}\right) \bmod \widehat{I_0}  & \longmapsto X \bmod \widehat{I_2}.
	\end{align*}
	is an algebra isomorphism.
\end{theorem}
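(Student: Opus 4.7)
The plan is to deduce Theorem~\ref{theorem:quantum-stage-reduction} from its classical counterpart, Theorem~\ref{theorem:geometrical-stage-reduction}, by passing to associated graded algebras with respect to Kazhdan filtrations and invoking the quantization statement of Theorem~\ref{theorem:quantization-slodowy}. First, I would check that $\widehat{\Psi}$ is a well-defined algebra homomorphism. By Lemma~\ref{lemma:compatibility}, $\chi_1$ and $\chi_2$ agree on $\m_1$, so $\widehat{I_1} \subseteq \widehat{I_2}$ and there is a natural surjection of left $\Univ(\g)$-modules $Q_1 \twoheadrightarrow Q_2$. Since $\m_2 = \m_1 \oplus \m_0$ as vector spaces, its kernel is the left $\Univ(\g)$-submodule of $Q_1$ generated by $(y - \chi_2(y)) \bmod \widehat{I_1}$ for $y \in \m_0$. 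Restricting to $\Univ(\g, f_1) = Q_1^{\ad(\m_1)}$ and then passing to $\ad(\m_0)$-invariants produces a map $\widehat{\Psi} \colon \Univ_0 \to \Univ(\g, f_2)$ whose image consists of elements simultaneously $\ad(\m_1)$- and $\ad(\m_0)$-invariant, hence $\ad(\m_2)$-invariant; multiplicativity is immediate from the construction.

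The heart of the proof is then to endow the three W-algebras with Kazhdan-type filtrations for which $\widehat{\Psi}$ is strictly filtered, and to identify $\gr \widehat{\Psi}$ with the Poisson isomorphism $\Psi$ of Theorem~\ref{theorem:geometrical-stage-reduction}. Thanks to Lemma~\ref{lemma:nice-triple}, $f_1$ lies in $\Gamma^2$-degree $-2$ and admits an $\sl_2$-triple $(e_1, h_1, f_1)$ compatible with $\Gamma^2$; moreover the Slodowy slice $\Slice_1$ is stable under the $\C^{\times}$-action induced by $\Gamma^2$, since both $\chi_1$ and $\Ker \ad^*(e_1)$ are fixed by it. One can therefore coherently equip $\Univ(\g, f_1)$ with the $\Gamma^2$-Kazhdan filtration alongside the standard one on $\Univ(\g, f_2)$; by Theorem~\ref{theorem:quantization-slodowy}, the associated graded algebras are then $\Poi(\g, f_1)$ and $\Poi(\g, f_2)$. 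Inducing this filtration onto $\Univ_0 = (\Univ(\g, f_1)/\widehat{I_0})^{\ad(\m_0)}$ yields an associated graded of the form $(\Poi(\g, f_1)/\gr \widehat{I_0})^{\ad(\m_0)}$, and the key point is that $\gr \widehat{I_0}$ coincides with the classical ideal $I_0$ from Theorem~\ref{theorem:geometrical-stage-reduction}. Granting this, $\gr \widehat{\Psi} = \Psi$ tautologically on generators, and a standard filtered/graded argument upgrades this to $\widehat{\Psi}$ being an algebra isomorphism.

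The main technical obstacle is precisely the identification $\gr \widehat{I_0} = I_0$, which forces a careful choice of filtration. The naive $\Gamma^1$-Kazhdan filtration on $\Univ(\g, f_1)$ is inadequate: the generators $y - \chi_2(y)$ of $\widehat{I_0}$ (with $y \in \m_0 \subseteq \g^{(1)}_0$) are inhomogeneous, since $y$ has Kazhdan degree $2$ whereas $\chi_2(y) = (f_0 \mid y)$ is a scalar in degree $0$, so their leading symbols would yield only the strictly smaller ideal generated by the $y$'s, losing the constant terms. The compatibility hypotheses of Main Theorem~\ref{main:freeness}, namely $q_2 = q_1 + q_0$ with $[q_0, f_1] = 0$ and $f_0, \m_0 \subseteq \g^{(1)}_0$, together with the $\h$-stability of $\m_0$, are precisely what is needed to rescue the argument: after switching to the $\Gamma^2$-Kazhdan filtration, each such generator becomes homogeneous of Kazhdan degree $0$, and the identification $\gr \widehat{I_0} = I_0$ then holds.
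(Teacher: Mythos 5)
Your overall strategy --- check that $\widehat{\Psi}$ is well defined, switch to the Kazhdan filtration attached to $\Gamma^2$, identify $\gr \widehat{\Psi}$ with the classical isomorphism $\Psi$ of Theorem \ref{theorem:geometrical-stage-reduction}, and conclude by a filtered-to-graded argument --- is exactly the paper's strategy, and you correctly isolate one genuine subtlety: under the $\Gamma^1$-Kazhdan filtration the symbols of the generators $y - \chi_2(y)$ of $\widehat{I_0}$ lose their constant terms, so the $\Gamma^2$-filtration is forced. However, the first of the two steps you treat as automatic is already a gap: you invoke Theorem \ref{theorem:quantization-slodowy} to claim that the associated graded of $\Univ(\g, f_1)$ for the $\Gamma^2$-Kazhdan filtration is $\Poi(\g, f_1)$. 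That theorem only covers the filtration coming from the good grading used to define the W-algebra, namely $\Gamma^1$ for $\Univ(\g, f_1)$. The statement for $\gr^{(2)}$ is Proposition \ref{proposition:blue-diagram} of the paper, and its proof is not formal: it uses Premet's PBW theorem for $\Univ(\g, f_1)$, the fact that $\ad(q_0)$ descends to $\Univ(\g, f_1)$ (which needs $[q_0, f_1] = 0$), and the $q_0$-homogeneity of the PBW generators to show that the change-of-grading map $\gr^{(1)} Q_1 \to \gr^{(2)} Q_1$ restricts to an isomorphism $\gr^{(1)} \Univ(\g, f_1) \cong \gr^{(2)} \Univ(\g, f_1)$.

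The second and more serious gap is your assertion that the induced filtration on $\Univ_0 = (\Univ(\g, f_1)/\widehat{I_0})^{\ad(\m_0)}$ has associated graded $(\Poi(\g, f_1)/\gr \widehat{I_0})^{\ad(\m_0)}$, so that the only issue is $\gr \widehat{I_0} = I_0$. Taking associated graded does not commute with taking $\ad(\m_0)$-invariants in general: one only gets an inclusion of $\gr^{(2)} \Univ_0$ into $(\gr^{(2)} Q_0)^{\ad(\m_0)}$, and equality is precisely where the freeness statement of Theorem \ref{theorem:freeness} enters. The paper proves this (Proposition \ref{proposition:green-diagram}) by a Chevalley--Eilenberg spectral-sequence argument: the isomorphism $\C[{\mu_0}^{-1}(0)] \cong \C[M_0] \otimes_{\C} \C[\Slice_2]$ forces the higher cohomology of $\gr^{(2)} Q_0 \otimes_{\C} \Alt^{\bullet}(\m_0)^*$ to vanish (de Rham cohomology of the affine space $M_0$), so the spectral sequence collapses; a companion argument on the target side (Proposition \ref{proposition:purple-diagram}) is then needed to see that $\gr \widehat{\Psi}$ really coincides with $\Psi$, which is not tautological. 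Without these cohomological inputs your ``standard filtered/graded argument'' does not go through, since the graded of the quantum invariants could a priori be strictly smaller than the classical invariants; in other words, the proposal omits the part of the proof where the geometric freeness theorem is actually used.
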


To prove Theorem \ref{theorem:quantum-stage-reduction}, the idea is to use the Poisson isomorphism given by Theorem \ref{theorem:geometrical-stage-reduction}. We need to endow our objects with relevant filtrations or gradings. We notice that $f_1\in \g^{(1)}_2 \cap \g^{(1)}_2$ because $[q_0, f_1] = 0$. Hence, $\chi_1 = \Phi(f_1)$ is fixed for the action $\rho_2 : \C^{\times} \rightarrow \Aut_{\C}(\g^*)$ induced by $\Gamma^2$. Hence all the following varieties of Section \ref{section:geometrical-reduction} are $\rho_2$-stable:
$$ {\mu_1}^{-1}(0), \quad \Slice_1, \quad \widetilde{\mu_0}^{-1}(0), \quad {\mu_2}^{-1}(0), \quad \Slice_2. $$
So, natural filtrations and gradings for this problem are induced by $\Gamma^2$. 

We denote by $\Univ^{(i)}_{\bullet}(\g)$ the Kazhdan filtration on $\Univ(\g)$ associated to $\Gamma^i$, for $i=1,2$. We denote the associated graded algebra by $\gr^{(i)}_{\bullet}\Univ(\g)$. By projection or intersection, the filtration $\Univ^{(2)}_{\bullet}(\g)$ associated to $\Gamma^2$ descends to: 
$$ Q_1, \quad \Univ(\g, f_1), \quad Q_0, \quad \Univ_0, \quad Q_2, \quad \Univ(\g, f_2). $$

\begin{remark}
	Since $q_2 = q_1 + q_0$, one can define the Kazhdan filtration associated to $\Gamma^2$ thanks to the one associated to $\Gamma^1$ and the semisimple element $q_0$:
	$$ \Univ^{(2)}_n(\g) = \sum_{m - \omega = n} \{X \in \Univ^{(1)}_m(\g) ~  | ~  [q_0, X]= \omega X\}. $$
\end{remark}

\begin{figure}[t]
	$$\begin{tikzcd}
		\Sym(\g) \arrow[d, two heads] \arrow[r, equal] & \Sym(\g) \arrow[d, two heads]   \\
		\Sym(\g)/I_1  \arrow[r, two heads] & \Sym(\g)/I_2  \\
		\Poi(\g, f_1) \arrow[d, two heads] \arrow[r, two heads] \arrow[u, hook] & {\left(\Sym(\g)/I_2\right)^{\ad(\m_1)}} \arrow[u, hook]  \\
		{\Poi(\g, f_1)/I_0}  \arrow[ru, "\sim"'] & \\
		\left(\Poi(\g, f_1)/I_0\right)^{\ad(\m_0)} \arrow[u, hook] \arrow[r, "\Psi", "\sim"'] & \Poi(\g, f_2) \arrow[uu, hook] \\
	\end{tikzcd} \quad \begin{tikzcd}
		\Univ(\g) \arrow[r, equal] \arrow[d, two heads] & \Univ(\g) \arrow[d, two heads] \\
		Q_1 \arrow[r, "(a)"] & Q_2  \\
		\Univ(\g, f_1) \arrow[d, two heads] \arrow[r, "(b)"] \arrow[u, hook] & {Q_2}^{\ad(\m_1)} \arrow[u, hook]  \\
		Q_0  \arrow[ru, "(c)"'] & \\
		\Univ_0 \arrow[u, hook] \arrow[r, "\widehat{\Psi}"] & \Univ(\g, f_2)\arrow[uu, hook] \\
	\end{tikzcd}$$
	\caption{Construction of $\Psi$ and $\widehat{\Psi}$}
	\label{figure:diagram-quantum}
\end{figure}
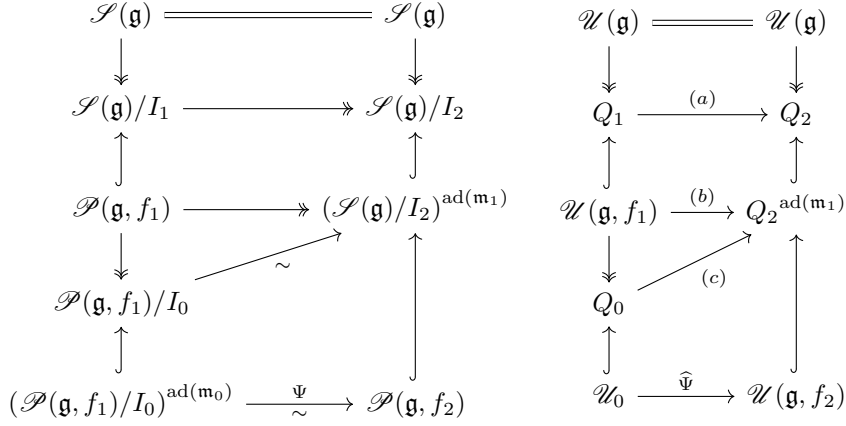

\begin{lemma}
	The map $\widehat{\Psi} : \Univ_0  \rightarrow \Univ(\g, f_2)$ of Theorem \ref{theorem:quantum-stage-reduction} is a well-defined filtered algebra homomorphism for the Kazdhan filtrations associated to $\Gamma^2$.
\end{lemma}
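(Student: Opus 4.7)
The plan is to traverse the diagram in Figure~\ref{figure:diagram-quantum} and verify each arrow in turn, so that $\widehat{\Psi}$ emerges as a well-defined composition landing in $\Univ(\g, f_2)$, before upgrading this to a filtered algebra homomorphism.

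For well-definedness, I would first establish $\widehat{I_1} \subseteq \widehat{I_2}$: by Lemma~\ref{lemma:compatibility}, for every $y \in \m_1 \subseteq \m_2$ one has $\chi_1(y) = \chi_2(y)$, so the generators $y - \chi_1(y)$ of $\widehat{I_1}$ lie in $\widehat{I_2}$, and the left-ideal property closes the inclusion. This produces arrow (a) $Q_1 \twoheadrightarrow Q_2$ and, by restriction, arrow (b) $\Univ(\g, f_1) \to Q_2^{\ad(\m_1)}$ (since $[z, X] \in \widehat{I_1}$ implies $[z, X] \in \widehat{I_2}$ for $z \in \m_1$). To factor the composition through $Q_0$, I would show that the preimage of $\widehat{I_0}$ in $\Univ(\g)$ lies in $\widehat{I_2}$: an arbitrary element of $\widehat{I_0}$ has the form $\sum_i a_i \cdot ((y_i - \chi_2(y_i)) \bmod \widehat{I_1})$ with $a_i \in \Univ(\g, f_1)$ and $y_i \in \m_0 \subseteq \m_2$, and picking lifts $\tilde{a}_i \in \Univ(\g)$ of the $a_i$ yields the representative $\sum_i \tilde{a}_i(y_i - \chi_2(y_i))$, which sits in $\widehat{I_2}$ because each $y_i - \chi_2(y_i) \in \widehat{I_2}$ and $\widehat{I_2}$ is a left ideal. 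This gives arrow (c) and the well-definedness of $\widehat{\Psi}$. For the image to sit in $\Univ(\g, f_2) = Q_2^{\ad(\m_2)}$, I would combine the $\ad(\m_1)$-invariance from (b) with $\ad(\m_0)$-invariance: if $X \bmod \widehat{I_1}$ represents an element of $\Univ_0$ and $y \in \m_0$, the relation $[y, X] \bmod \widehat{I_1} \in \widehat{I_0}$ lifts to $[y, X] \in \widehat{I_2}$ by the preceding argument, and the decomposition $\m_2 = \m_1 \oplus \m_0$ gives $\ad(\m_2)$-invariance. Multiplicativity is automatic since each intermediate product is the one induced from $\Univ(\g)$ via lifts of representatives.

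For the filtered property, the $\Gamma^2$-Kazhdan filtration $\Univ^{(2)}_\bullet(\g)$ descends to each of $Q_i$, $\Univ(\g, f_i)$, $Q_0$ and $\Univ_0$ by projection or intersection, and Lemma~\ref{lemma:nice-triple} ensures this works coherently: since $f_1 \in \g^{(2)}_{-2}$, the character $\chi_1$ is $\rho_2$-invariant, so the generators $y - \chi_i(y)$ of $\widehat{I_i}$ ($i = 1, 2$) decompose into $\Gamma^2$-Kazhdan-homogeneous pieces (using the $q_2$-eigendecomposition of $y$, which respects each $\m_i$ by $\h$-stability). As $\widehat{\Psi}$ is constructed stage by stage from lifts to $\Univ(\g)$, it automatically preserves the filtration. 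The main obstacle requiring care is the lifting step for arrow (c): the ideal $\widehat{I_0}$ is defined inside the quotient $\Univ(\g, f_1)$ rather than directly in $\Univ(\g)$, and identifying its preimage in $\Univ(\g)$ with a subset of $\widehat{I_2}$ is what glues the whole construction together.
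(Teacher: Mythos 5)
Your proposal is correct and follows essentially the same route as the paper: it traverses the right-hand diagram of Figure~\ref{figure:diagram-quantum}, deduces $\widehat{I_1} \subseteq \widehat{I_2}$ from Lemma~\ref{lemma:compatibility} to get arrows $(a)$ and $(b)$, checks that $\widehat{I_0}$ is killed modulo $\widehat{I_2}$ to obtain $(c)$, and uses the $\ad(\m_1)$- and $\ad(\m_0)$-invariance together with $\m_2 = \m_1 \oplus \m_0$ to land in $\Univ(\g, f_2)$, with the filtered and multiplicative properties automatic since everything is induced from $\Univ(\g)$. You merely spell out in more detail (lifts of representatives, homogeneity of generators) what the paper leaves as brief remarks.
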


\begin{proof}
	To construct $\widehat{\Psi}$, we use the right diagram of Figure \ref{figure:diagram-quantum}. The arrow $(a)$ is well-defined because $\widehat{I_{1}} \subseteq \widehat{I_{2}}$, as a straightforward consequence of Lemma \ref{lemma:compatibility}. One can check it is $\m_1$-equivariant so the map $(b)$ is well-defined. $(b)$ sends $\widehat{I_0}$ into $\widehat{I_2}$ by definition of these ideals, hence $(c)$ is well-defined. It is $\m_0$-equivariant, hence $\widehat{\Psi}$ is indeed well-defined.
\end{proof}

The left diagram of Figure \ref{figure:diagram-quantum} summarizes the construction of $\Psi$ in the proof of Theorem \ref{theorem:geometrical-stage-reduction}.

\begin{proposition} \label{proposition:graded-diagram}
	The left diagram of Figure \ref{figure:diagram-quantum} consists of graded algebra homomorphisms for the Kazhdan gradings associated to $\Gamma^2$.
\end{proposition}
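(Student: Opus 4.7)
The plan is to equip every object in the left diagram with its natural Kazhdan grading coming from $\Gamma^2$ on $\Sym(\g)$ and to verify that each arrow respects this grading. The starting point is the observation that the $\C^{\times}$-action $\rho_2$ defining the $\Gamma^2$-Kazhdan grading fixes both linear forms $\chi_1$ and $\chi_2$: for $\chi_2$ this is by construction, and for $\chi_1$ it follows from Lemma~\ref{lemma:nice-triple}, which gives $f_1 \in \g^{(2)}_{-2}$ and hence places $\chi_1 = \Phi(f_1)$ in Kazhdan degree $0$. The algebra-homomorphism part of the statement is already essentially built into the construction of the diagram in the proof of Theorem~\ref{theorem:geometrical-stage-reduction}; only the graded part remains.

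First, I would check that $I_1$, $I_2$, and $I_0$ are graded ideals. Decompose any $y \in \m_i$ into its $\Gamma^2$-homogeneous components $y_j \in \m_i \cap \g^{(2)}_j$. Since $f_i \in \g^{(2)}_{-2}$ for $i=1,2$, the constant $\chi_i(y_j) = (f_i \mid y_j)$ vanishes unless $j = 2$; in the exceptional case $j = 2$ both $y_j$ (as an element of $\Sym(\g)$) and the scalar $\chi_i(y_j)$ lie in Kazhdan degree $0$, while for $j \neq 2$ the generator $y_j - \chi_i(y_j) = y_j$ is Kazhdan-homogeneous of degree $2 - j$. The generators $(y - \chi_2(y)) \bmod I_1$ of $I_0 \subseteq \Poi(\g, f_1)$ are handled identically. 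This decomposition relies on the fact that $\m_0$, $\m_2$, and (via the ideal property) $\m_1$ admit bases of $\Gamma^2$-homogeneous elements, which in turn follows from the $\h$-stability hypothesis on $\m_0$ together with the standard freedom to pick an $\h$-stable Lagrangian in $\g^{(2)}_1$ (recall $q_2 \in \h$).

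Next I would verify that the invariant subspaces $\Poi(\g, f_1)$ and $(\Poi(\g, f_1)/I_0)^{\ad(\m_0)}$ are graded subspaces of the graded quotients $\Sym(\g)/I_1$ and $\Poi(\g, f_1)/I_0$. The Poisson bracket on $\Sym(\g)$ satisfies $\{\Sym_m, \Sym_n\} \subseteq \Sym_{m+n-2}$, so for a Kazhdan-homogeneous generator $y_j - \chi_i(y_j)$ of degree $2 - j$, the operator $\{y_j - \chi_i(y_j), \cdot\}$ is homogeneous of degree $-j$ on $\Sym(\g)$ and descends to a homogeneous operator on the relevant quotient. Combined with the fact that the target ideals $I_1$ and $I_0$ are graded, this forces the invariance condition on $X = \sum_n X_n$ to decompose into independent conditions on each homogeneous component $X_n$. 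Hence both invariant subspaces are graded, and the inclusions of invariants into the ambient quotients are graded as well.

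Finally I would conclude that every arrow in the diagram is a graded algebra homomorphism: the four projections and the three inclusions are tautologically graded once source and target are, and the two isomorphisms labelled $\sim$ together with $\Psi$ are induced by the graded projection $\Sym(\g)/I_1 \twoheadrightarrow \Sym(\g)/I_2$ restricted to graded invariant subalgebras, as set up in the proof of Theorem~\ref{theorem:geometrical-stage-reduction}. I expect the main (and essentially the only) genuinely nontrivial point to be the $\rho_2$-invariance of $\chi_1$, which is supplied directly by Lemma~\ref{lemma:nice-triple}; once this is granted, verifying all the gradings reduces to the homogeneous-decomposition bookkeeping outlined above.
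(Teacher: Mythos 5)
Your proof is correct, but it takes a genuinely different route from the paper's. The paper treats the left diagram geometrically: it observes that $\chi_1$ is fixed by $\rho_2$ (since $[q_0,f_1]=0$ forces $f_1\in\g^{(2)}_{-2}$), so all the varieties $\mu_1^{-1}(0)$, $\Slice_1$, $\widetilde{\mu_0}^{-1}(0)$, $\mu_2^{-1}(0)$, $\Slice_2$ are $\rho_2$-stable, and then deduces the gradedness of the co-homomorphisms from the $\C^{\times}$-equivariance of the Gan--Ginzburg isomorphisms $\phi_i$, which it proves by an explicit computation with $\Ad^*(\exp(z))=\exp(\ad^*(z))$. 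You instead verify everything algebraically: $I_1$, $I_2$, $I_0$ are graded ideals for the $\Gamma^2$-Kazhdan grading because their generators are (sums of) homogeneous elements, the invariant subalgebras are graded because they are intersections of kernels of homogeneous bracket operators, and then every arrow is graded since it is a restriction or factorization of the identity/projection on $\Sym(\g)$. This is more elementary and fully sufficient for the proposition as stated; what the paper's equivariance lemma buys in addition is graded information at the level of the varieties themselves (e.g.\ that $\C[\mu_0^{-1}(0)]\cong\C[M_0]\otimes_{\C}\C[\Slice_2]$ is a graded isomorphism), which is reused in the spectral-sequence arguments of Propositions \ref{proposition:green-diagram} and \ref{proposition:purple-diagram}, so your shortcut does not make that lemma dispensable elsewhere. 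One small imprecision to repair: the existence of a $\Gamma^2$-homogeneous basis of $\m_1$ does not follow from the ideal property (and no re-choice of Lagrangian is needed, since any subspace of $\g^{(2)}_1$ is automatically $q_2$-homogeneous); rather, hypothesis (2) gives $\m_0\subseteq\g^{(1)}_0$ and hence $\m_1=\m_2\cap\g^{(1)}_{\geqslant 1}$, which is $\ad(q_2)$-stable because $\m_2$ is and $[q_1,q_2]=0$ makes each $\g^{(1)}_j$ stable under $\ad(q_2)$; with that correction your homogeneous-decomposition bookkeeping goes through.
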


This proposition follows from the following lemma and the proof of Theorem~\ref{theorem:geometrical-stage-reduction}.

\begin{lemma}
	The isomorphisms
	\begin{alignat*}{3} 
		\phi_i : & \ & M_i \times \Slice_{i} & \longrightarrow {\mu_i}^{-1}(0)& &  \\
		& & (g, \xi) & \longmapsto \Ad^*(g) \xi, & & \quad i=1,2,
	\end{alignat*} 
	given in Theorem \ref{theorem:slice-iso} are $\C^{\times}$-equivariant.
\end{lemma}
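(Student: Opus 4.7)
The plan is to equip both sides of $\phi_i$ with compatible $\C^{\times}$-actions and then verify the equivariance identity directly. Integrating the semisimple element $q_2 \in \h$, which acts on $\g$ with integer eigenvalues, yields a one-parameter subgroup $\gamma_2 \colon \C^{\times} \to G$, and unwinding the definitions gives the key identity
\begin{equation*}
\rho_2(t) \;=\; t^{2}\, \Ad^*(\gamma_2(t)) \qquad \text{on } \g^*,
\end{equation*}
which will drive the whole computation.

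The first step is to show that $\m_i$ is $q_2$-stable, so that conjugation $t \cdot g \defeq \gamma_2(t)\, g\, \gamma_2(t)^{-1}$ defines an algebraic $\C^{\times}$-action on $M_i$. For $i = 2$ this is immediate, since $\m_2$ is by construction a sum of $q_2$-eigenspaces. For $i = 1$ this is the only technical point: Lemma \ref{lemma:nice-triple} gives $f_1 \in \g^{(2)}_{-2}$, and because $q_0 = q_2 - q_1$ lies in $\h$ it commutes with $q_1$, so each graded piece $\g^{(1)}_j$ is $q_0$-stable, hence $q_2$-stable. It then suffices to arrange that the Lagrangian $\mathfrak{l}_1 \subset \g^{(1)}_1$ used to build $\m_1$ is $q_0$-stable, which is possible because $[q_0, f_1] = 0$ implies that the symplectic form $\chi_1([\cdot,\cdot])$ on $\g^{(1)}_1$ pairs opposite $q_0$-weight spaces nondegenerately; one thus produces a $q_0$-stable Lagrangian by combining all strictly positive $q_0$-weight spaces with a Lagrangian of the zero weight space. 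With such a choice, $\m_1 = \mathfrak{l}_1 \oplus \bigoplus_{j \geqslant 2} \g^{(1)}_j$ is $q_2$-stable.

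The second step is to confirm $\rho_2$-stability of the remaining objects. The slice $\Slice_1 = \chi_1 + \Ker \ad^*(e_1)$ is $\rho_2$-stable because $\chi_1$ is $\rho_2$-fixed (thanks to $f_1 \in \g^{(2)}_{-2}$) and $\ad^*(e_1)$ has $\rho_2$-weight $2$ on $\g^*$ (thanks to $e_1 \in \g^{(2)}_2$, as granted by Lemma \ref{lemma:nice-triple}); the case of $\Slice_2$ is standard, and the fibers $\mu_i^{-1}(0) = \chi_i + \m_i^{\perp}$ inherit $\rho_2$-stability from the first step.

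The third step is the equivariance itself, which boils down to the one-line computation
\begin{equation*}
\rho_2(t)\, \Ad^*(g)\, \xi \;=\; t^{2}\, \Ad^*\!\bigl(\gamma_2(t)\, g\bigr)\, \xi \;=\; \Ad^*\!\bigl(\gamma_2(t)\, g\, \gamma_2(t)^{-1}\bigr)\, \rho_2(t)\, \xi,
\end{equation*}
for $g \in M_i$, $\xi \in \Slice_i$ and $t \in \C^{\times}$, using only the multiplicativity of $\Ad^*$ and the displayed identity for $\rho_2$. The main obstacle is the first step for $i = 1$: one must juggle two different good gradings and ensure that the Lagrangian chosen when defining $\m_1$ is compatible with the coarser grading $\Gamma^2$. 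Once that compatibility is in place, the rest of the argument is formal.
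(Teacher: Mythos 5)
Your proof is correct, but it runs on a different engine than the paper's. The paper never introduces the cocharacter $\gamma_2$: it puts the $\C^{\times}$-action on $M_i$ by transporting the scaling action on the $\Gamma^2$-graded pieces of $\m_i$ through the exponential $\exp : \m_i \overset{\sim}{\rightarrow} M_i$, and then verifies equivariance of $(g,\xi) \mapsto \Ad^*(g)\xi$ by expanding $\Ad^*(\exp(z))\xi = \sum_{p\geqslant0}\frac{1}{p!}(\ad^*(z))^p\xi$ and matching weights term by term. Your route --- writing $\rho_2(t)$ as $t^{2}\Ad^*(\gamma_2(t)^{\pm1})$ and letting multiplicativity of $\Ad^*$ do all the work, with $M_i$ carrying the conjugation action --- makes the equivariance formal and is arguably cleaner; what the paper's version buys is that it never needs the grading element to integrate to a cocharacter of the chosen group $G$ (a harmless point in any case, since $\Ad^*$ and conjugation factor through the adjoint group), and it pins the sign convention: with the weights $t^{2-k}$ on $\Phi(\g^{(2)}_k)$ used in the paper's computation, your key identity should read $\rho_2(t) = t^{2}\Ad^*(\gamma_2(t)^{-1})$, with the matching conjugation by $\gamma_2(t)^{-1}$ on $M_i$; either convention is fine provided the two actions are paired consistently. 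Both proofs need $\m_1$ to be $\ad(q_2)$-stable, which the paper simply asserts; your treatment of this point is the one place where the logic is slightly off target, because $\m_1$ is part of the given data satisfying the hypotheses of Theorem \ref{theorem:freeness}, so you are not free to re-choose the Lagrangian $\mathfrak{l}_1$ (doing so could, a priori, break the decomposition $\m_2 = \m_1 \oplus \m_0$). Fortunately no choice is needed: since $\m_0 \subseteq \g^{(1)}_0$ and $\m_1 \subseteq \g^{(1)}_{\geqslant1}$, one gets $\m_1 = \m_2 \cap \g^{(1)}_{\geqslant1}$, and both $\m_2 = \mathfrak{l}_2 \oplus \g^{(2)}_{\geqslant2}$ (a sum of $q_2$-eigenspaces) and $\g^{(1)}_{\geqslant1}$ (stable because $[q_1,q_0]=0$) are $\ad(q_2)$-stable; hence $\m_1$, and with it $\mathfrak{l}_1 = \m_1 \cap \g^{(1)}_1$, is automatically $q_0$-stable, and your weight-space construction of a stable Lagrangian, while correct, is not needed.
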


This lemma is proved in \cite[Lemma 2.1]{gan2002quantization} in the case of $\Gamma$ being a Dynkin grading, by considering the map $\mathrm{SL}_2 \rightarrow G$ induced by the $\sl_2$-triple. We rephrase the proof below for any good grading.

\begin{proof}
	The subalgebra $\m_i$ is $\Z$-graded with the gradation induced by $\Gamma^2$. The scalar $t \in \C^{\times}$ acts on the homogeneous elements of $(\m_i)^{(2)}_j$, $j \in \Z$, by multiplication by $t^{-j}$.
	The group $M_i$ is unipotent so  the exponential induces an isomorphism of varieties $\exp : \m_i \overset{\sim}{\rightarrow} M_i. $ $M_i$ is equipped with a $\C^{\times}$-action by transporting the one of $\m_i$.
	
	The lemma follows if we prove that the map
	$$ M_i \times \g^* \longrightarrow \g^*, \quad (g, \xi) \longmapsto \Ad^*(g) \xi $$
	is $\C^{\times}$-equivariant. Let $z \in (\m_i)^{(2)}_j$, $j \in \Z$, and $\xi \in \Phi(\g_k^{(2)})$, $k \in \Z$. By property of the exponential:
	$$ \Ad^*(\exp(z)) \xi = \exp(\ad^*(z)) \xi = \sum_{p=0}^{\infty} \frac{1}{p!} (\ad^*(z))^p \xi. $$
	Applying the $\C^{\times}$-action one gets:
	$$ t \cdot (\Ad^*(\exp(z)) \xi) = \sum_{p=0}^{\infty} \frac{1}{p!} t^{2 - k - pj} (\ad^*(z))^p \xi.  $$
	This is obviously equal to
	$$  \sum_{p=0}^{\infty} \frac{1}{p!}  (\ad^*(t^{-j} z))^p (t^{2 - k}\xi) = \Ad^*(t \cdot \exp(z)) (t \cdot \xi), $$
	so the lemma is proved.
\end{proof}

The following theorem states that $\widehat{\Psi}$ quantizes $\Psi$.

\begin{theorem} \label{theorem:gr-two-stages}
	There are graded algebra isomorphisms
	$$ \left((\Poi(\g, f_1) / I_0)^{\ad(\m_0)}\right)^{(2)}_{\bullet} \cong \gr^{(2)}_{\bullet} \Univ_0 \quad \text{and} \quad \Poi^{(2)}_{\bullet}(\g, f_2)  \cong \gr^{(2)}_{\bullet} \Univ(\g, f_2) $$
	such that the following diagram commutes.
	$$ \begin{tikzcd}
		\left((\Poi(\g, f_1) / I_0)^{\ad(\m_0)}\right)^{(2)}_{\bullet} \arrow[r, "\Psi", "\sim"'] \arrow[d, "\sim"'] & \Poi^{(2)}_{\bullet}(\g, f_2) \arrow[d, "\sim"] \\
		\gr^{(2)}_{\bullet} \Univ_0 \arrow[r, "\gr \widehat{\Psi}"'] &  \gr^{(2)}_{\bullet} \Univ(\g, f_2)
	\end{tikzcd} $$
	Hence, the homomorphism
	$$\gr \widehat{\Psi} : \gr^{(2)}_{\bullet} \Univ_0 \longrightarrow  \gr^{(2)}_{\bullet} \Univ(\g, f_2)$$
	is an isomorphism. 
\end{theorem}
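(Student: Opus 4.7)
The plan is to establish the two vertical graded isomorphisms independently, verify commutativity of the square, and conclude that $\gr \widehat{\Psi}$ is an isomorphism by a formal diagram-chase.

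For the right vertical arrow I would simply invoke Theorem~\ref{theorem:quantization-slodowy} applied to the pair $(\g, f_2)$ together with the good grading $\Gamma^2$: this yields $\Poi^{(2)}_{\bullet}(\g, f_2) \cong \gr^{(2)}_{\bullet} \Univ(\g, f_2)$ as graded Poisson algebras, with no further work beyond citing Gan--Ginzburg.

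The substantive part is the left vertical identification, which I would build inductively, mirroring the successive constructions in the right-hand column of Figure~\ref{figure:diagram-quantum}. The starting point is $\gr^{(2)}_{\bullet} \Univ(\g) \cong \Sym^{(2)}_{\bullet}(\g)$ from Proposition~\ref{proposition:quantization-Sg}. The next step asserts $\gr^{(2)}_{\bullet} Q_1 \cong (\Sym(\g)/I_1)^{(2)}_{\bullet}$; the key observation is that the generators $y - \chi_1(y)$ of $\widehat{I_1}$ are already $\Gamma^2$-Kazhdan homogeneous: for $y \in \m_1 \cap \g^{(1)}_j \cap \g(q_0, \omega)$, invariance of $(\cdot \mid \cdot)$ together with the inclusion $f_1 \in \g^{(1)}_{-2} \cap \g^{(2)}_{-2}$ from Lemma~\ref{lemma:nice-triple} forces $\chi_1(y) = (f_1 \mid y) = 0$ unless $j=2$ and $\omega = 0$, in which case both $y$ and $\chi_1(y)$ sit in $\Univ^{(2)}_0(\g)$. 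A standard PBW argument then yields the claim. Next, to obtain $\gr^{(2)}_{\bullet} \Univ(\g, f_1) \cong \Poi^{(2)}_{\bullet}(\g, f_1)$, I would replay the Gan--Ginzburg invariance argument for $\ad(\m_1)$; the Remark in Subsection~\ref{subsec:quantum-stage} expresses the $\Gamma^2$-Kazhdan filtration in terms of the $\Gamma^1$-one combined with the $\ad(q_0)$-weight decomposition, and the fact that $\ad(q_0)$ descends to $\Univ(\g, f_1)$ (thanks to $\h$-stability of $\m_1$ and $\ad(q_0)$-invariance of $\chi_1|_{\m_1}$) makes this work. The two remaining steps --- quotient by the $\Gamma^2$-homogeneous left ideal $\widehat{I_0}$, and passage to $\ad(\m_0)$-invariants --- are handled in the same spirit, using Theorem~\ref{theorem:freeness} to supply the freeness/flatness input needed for a second-stage PBW and invariants computation. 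Commutativity of the square is then automatic: both $\Psi$ and $\widehat{\Psi}$ act by the identity-on-representatives rule, sending $X$ modulo a smaller ideal to $X$ modulo a larger one, so under the vertical identifications they match. Since $\Psi$ is already a graded isomorphism by Proposition~\ref{proposition:graded-diagram} and Theorem~\ref{theorem:geometrical-stage-reduction}, two-out-of-three forces $\gr \widehat{\Psi}$ to be an isomorphism.

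The main obstacle in this plan is the final step of the left column: proving that $\ad(\m_0)$-invariants commute with passage to $\gr^{(2)}$ when applied to $Q_0 = \Univ(\g, f_1)/\widehat{I_0}$. This is a genuine reduction-by-stages analogue of Gan--Ginzburg's key lemma and is not formal, since $\Univ(\g, f_1)$ is no longer a universal enveloping algebra and one cannot directly quote Theorem~\ref{theorem:quantization-slodowy}. The cleanest route, I expect, is to leverage the freeness statement of Theorem~\ref{theorem:freeness} --- the $M_0$-isomorphism $M_0 \times \Slice_2 \cong \mu_0^{-1}(0)$ --- to transfer the classical PBW/flatness on $\mu_1^{-1}(0)/M_1$ to the Rees algebra of $\Univ(\g, f_1)$ with respect to the $\Gamma^2$-Kazhdan filtration, so that the standard Gan--Ginzburg spectral-sequence argument can be run one more time in this iterated setting.
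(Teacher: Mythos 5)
Your proposal is correct and follows essentially the same route as the paper: first transfer the Gan--Ginzburg isomorphism for $\Univ(\g, f_1)$ from the $\Gamma^1$- to the $\Gamma^2$-Kazhdan filtration using the $\ad(q_0)$-action descending to $\Univ(\g, f_1)$ (the paper's Proposition~\ref{proposition:blue-diagram}, implemented there with a $q_0$-homogeneous Premet PBW basis), then use the freeness of Theorem~\ref{theorem:freeness} to feed a Chevalley--Eilenberg spectral sequence showing that $\ad(\m_0)$-invariants commute with $\gr^{(2)}$ on $Q_0$ (Proposition~\ref{proposition:green-diagram}), exactly the obstacle you single out. The only real difference is presentational: the paper additionally proves a refinement of Theorem~\ref{theorem:quantization-slodowy} through the intermediate $\ad(\m_1)$-invariants (Proposition~\ref{proposition:purple-diagram}) to certify the commutativity of the square that you treat as automatic on representatives.
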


Theorem \ref{theorem:gr-two-stages} implies Theorem \ref{theorem:quantum-stage-reduction}. Indeed, if it is true, $\Psi $ and $\gr \widehat{\Psi}$ are isomorphisms. Hence, according to \cite[Lemma 6.8]{kamnitzer2022hamiltonian}, $\widehat{\Psi}$ is an isomorphism because the filtrations are nonnegative and exhaustive. So, let us prove Theorem~\ref{theorem:gr-two-stages} to achieve the proof of Main Theorem~\ref{main:quantum-stage-reduction}. 

\subsection{Proof of Theorem \ref{theorem:gr-two-stages}} We split the proof in three propositions. The first one says that Theorem \ref{theorem:quantization-slodowy} remains true for the W-algebra $\Univ(\g, f_1)$ if we we use the Kazhdan filtration associated to $\Gamma^2$ instead of $\Gamma^1$.

\begin{proposition} \label{proposition:blue-diagram}
	There is a graded algebra isomorphism
	$$ \Sym^{(2)}_{\bullet}(\g) \cong \gr^{(2)}_{\bullet} \Univ(\g) $$
	which makes the following diagram commutes.
	$$\begin{tikzcd}
		\Sym^{(2)}_{\bullet}(\g) \arrow[r, "\sim"] \arrow[d, two heads] & \gr^{(2)}_{\bullet} \Univ(\g) \arrow[d, two heads] \\
		(\Sym(\g)/I_1)^{(2)}_{\bullet} \arrow[r, "\sim"] & \gr^{(2)}_{\bullet} Q_1 \\
		\Poi^{(2)}_{\bullet}(\g, f_1) \arrow[r, "\sim"] \arrow[u, hook] & \gr^{(2)}_{\bullet} \Univ(\g, f_1) \arrow[u, hook]
	\end{tikzcd} $$
	
\end{proposition}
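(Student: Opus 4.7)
The plan is to deduce this refined quantization result from the standard Gan-Ginzburg theorem (Theorem~\ref{theorem:quantization-slodowy}) for $f_1$ with the $\Gamma^1$-Kazhdan filtration, by exploiting the relationship between the two Kazhdan filtrations recorded in the remark of Subsection~\ref{subsec:quantum-stage}: since $q_2 = q_1 + q_0$ with $q_0 \in \h$ commuting with $q_1$, the $\Gamma^2$-Kazhdan filtration on $\Univ(\g)$ is obtained from the $\Gamma^1$-Kazhdan filtration by a shift controlled by the semisimple $\ad(q_0)$-grading.

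The first step is to verify that all objects and maps in the $\Gamma^1$-version of the diagram admit a compatible $\ad(q_0)$-action. The operator $\ad(q_0)$ commutes with $\ad(q_1)$ and so preserves each $\g^{(1)}_j$, hence preserves the $\Gamma^1$-Kazhdan filtration on $\Univ(\g)$ and the $\Gamma^1$-Kazhdan grading on $\Sym(\g)$. The hypothesis $[q_0, f_1] = 0$ from Main Theorem~\ref{main:freeness} implies $\chi_1 \circ \ad(q_0) = 0$ on $\g$, since $\chi_1([q_0, y]) = ([f_1, q_0] \mid y) = 0$; in particular, the symplectic form $\chi_1([\cdot, \cdot])$ on $\g^{(1)}_1$ is $\ad(q_0)$-invariant, so the Lagrangian $\mathfrak{l}_1$ defining $\m_1$ can be chosen $\ad(q_0)$-stable. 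It then follows that $\m_1$, the ideals $\widehat{I_1}$ and $I_1$, and the algebras $Q_1$, $\Sym(\g)/I_1$, $\Univ(\g, f_1)$, and $\Poi(\g, f_1)$ are all $\ad(q_0)$-stable, and that every map in the standard $\Gamma^1$-version of the diagram provided by Theorem~\ref{theorem:quantization-slodowy} is $\ad(q_0)$-equivariant.

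The second step is a direct computation using the formula from the remark of Subsection~\ref{subsec:quantum-stage}: for any of the filtered pieces $A$ appearing in the diagram,
$$ \gr^{(2)}_n A = \bigoplus_{\omega} \bigl(\gr^{(1)}_{n + \omega} A\bigr)[\omega], $$
where $[\omega]$ denotes the $\ad(q_0)$-eigenspace of eigenvalue $\omega$; the analogous identity holds for the $\Gamma^2$-Kazhdan grading on $\Sym(\g)$ and its quotients. Combining this with the $\ad(q_0)$-equivariance of the previous step, the $\Gamma^1$-version of the diagram transfers termwise, respecting products and Poisson brackets, to the $\Gamma^2$-version stated in the proposition, with commutativity of the squares inherited from the $\Gamma^1$-version.

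The main technical obstacle I anticipate is arranging an $\ad(q_0)$-equivariant choice of Lagrangian $\mathfrak{l}_1$ compatible with the given decomposition $\m_2 = \m_1 \oplus \m_0$; for this, the compatibility condition $[q_0, f_1] = 0$ is indispensable, as it guarantees the $\ad(q_0)$-invariance of the symplectic form used to select $\mathfrak{l}_1$. Everything else reduces to a formal application of Gan-Ginzburg combined with bookkeeping of the double grading.
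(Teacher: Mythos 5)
Your argument is correct, but it implements the crucial step differently from the paper. Both proofs share the same starting point (the change-of-grading isomorphism $\Sym(\g)\cong\gr^{(1)}\Univ(\g)\cong\gr^{(2)}\Univ(\g)$ coming from $[q_1,q_2]=0$, and the fact that $[q_0,f_1]=0$ makes $\widehat{I_1}$, $Q_1$ and $\Univ(\g,f_1)$ stable under the semisimple action of $\ad(q_0)$). Where the paper then invokes Premet's PBW theorem to produce generators $Y_u$ of $\Univ(\g,f_1)$ that are $\ad(q_0)$-homogeneous, and computes their symbols explicitly to see that the change-of-grading map restricts to an isomorphism $\gr^{(1)}\Univ(\g,f_1)\cong\gr^{(2)}\Univ(\g,f_1)$, you instead deduce the identity $\gr^{(2)}_n A=\bigoplus_\omega\bigl(\gr^{(1)}_{n+\omega}A\bigr)[\omega]$ formally, and transfer the $\Gamma^1$-diagram of Theorem~\ref{theorem:quantization-slodowy} eigenspace by eigenspace. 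This does work, and it buys you independence from the PBW theorem; but be aware that for $A=Q_1$ and $A=\Univ(\g,f_1)$ this identity is precisely where the content lies (their $\Gamma^2$-filtrations are defined by projection and intersection, not as shifted filtrations), so the ``direct computation'' should be spelled out: since $\ad(q_0)$ is locally finite and semisimple on $\Univ(\g)$, every $\ad(q_0)$-stable subspace (each $\Univ^{(i)}_n(\g)$, $\widehat{I_1}$, $\Univ(\g,f_1)$, and each filtration piece of $Q_1$) is the direct sum of its weight components and weight vectors of $Q_1$ lift to weight vectors of $\Univ(\g)$; only then does the remark of Subsection~\ref{subsec:quantum-stage} propagate from $\Univ(\g)$ to the subquotients, giving your formula and hence the commuting $\Gamma^2$-diagram, with the resulting horizontal maps agreeing with the canonical ones. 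Two further caveats: your statement that $\mathfrak{l}_1$ ``can be chosen'' $\ad(q_0)$-stable clashes with the fact that $\m_1$ is fixed by the hypothesis $\m_2=\m_1\oplus\m_0$, so $q_0$-stability of $\m_1$ is really an implicit assumption (the paper makes the same one when it picks $q_1$- and $q_2$-homogeneous bases of $\m_1$); and note that the paper's PBW-based route is reused later (the $q_0$-homogeneous PBW basis containing a basis of $\m_0$ enters the proof of Proposition~\ref{proposition:green-diagram}), so avoiding it here does not remove it from the overall argument.
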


\begin{proof}
	The first line of this diagram is a composition of (not-graded) Poisson algebra isomorphisms
	$$ \Sym(\g) \overset{\sim}{\longrightarrow} \gr^{(1)} \Univ(\g) \overset{\sim}{\longrightarrow} \gr^{(2)} \Univ(\g) $$
	given by the mapping 
	\begin{equation} \g^{(1)}_j \cap\g^{(2)}_k \ni x \longmapsto [x]^{(1)}_{2-j}  \longmapsto [x]^{(2)}_{2-k} \label{equation:change-grading} \end{equation}
	for $j,k \in \Z$. It is well-defined because $[q_1, q_2]=0$.
	
	\begin{claim} \label{claim:gr-ideals}
		Under these isomorphisms, $I_1$ is mapped to $\gr^{(1)} \widehat{I_1}$ and then $\gr^{(2)} \widehat{I_1}$. Hence by the universal property of quotient we get that the following rectangle commutes:
		$$\begin{tikzcd}
			\Sym^{(2)}_{\bullet}(\g) \arrow[r, "\sim"] \arrow[d, two heads] & \gr^{(2)}_{\bullet} \Univ(\g) \arrow[d, two heads] \\
			\left(\Sym(\g)/I_1\right)^{(2)}_{\bullet} \arrow[r, "\sim"] & \gr^{(2)}_{\bullet} Q_1 
		\end{tikzcd} $$
	\end{claim}
	
	\begin{proof}[Proof of the claim]
		We consider a decomposition $ \g = \m_1 \oplus \p $ compatible with $\Gamma^2$, a basis $(y_1, \dots, y_m)$ a basis of $\m_1$ and $(y_{m+1}, \dots, y_d)$ a basis of $\p$ which consist of $q_1$ and $q_2$-homogeneous vectors: $y_{\ell} \in \g^{(1)}_{j_{\ell}} \cap \g^{(2)}_{k_{\ell}}$. 
		
		The ideal $I_1$ is spanned by the monomials
		$$ {y_1}^{i_1} \cdots {y_d}^{i_d} (y_q - \chi_1(y_q)), $$
		the ideal $\gr^{(1)}\widehat{I_1}$ is spanned by the monomials
		$$\left([y_1]^{(1)}_{2 - j_1}\right)^{i_1} \cdots \left([y_d]^{(1)}_{2 - j_d}\right)^{i_d}  \left([y_d]^{(1)}_{2 - j_q} - \chi_1(y_q)\right), $$
		and the ideal $\gr^{(2)}\widehat{I_1}$ is spanned by the monomials
		$$\left([y_1]^{(2)}_{2 - k_1}\right)^{i_1} \cdots \left([y_d]^{(2)}_{2 - k_d}\right)^{i_d}  \left([y_d]^{(2)}_{2 - k_q} - \chi_1(y_q)\right),$$
		where $1 \leqslant q \leqslant m$ and $i_1, \dots, i_{\ell} \in \Z_{\geqslant 0} $.
		
		The claim is now clearly true.
	\end{proof}

    Let us choose a nice PBW basis of $\Univ(\g, f_1)$ which is provided by \cite{premet2002slices}. We can consider a decomposition of $\g$ as $\C$-vector space $ \g = \m_1 \oplus \mathfrak{a} \oplus  \Ker \ad(f_1)$, each summand is chosen as $q_1$ and $q_2$-stable. Since $[q_1, q_2] = 0$ by assumption, we can set $(y_1, \dots, y_c)$ an ordered basis of $\Ker \ad(f)$ and $(y_{c+1}, \dots, y_p)$ an ordered basis of $\mathfrak{a}$ which are $q_1$, $q_2$ and $q_0$-homogeneous: \begin{equation} y_{\ell} \in \g^{(1)}_{j_{\ell}} \cap \g(q_0, \omega_{\ell}) \subseteq \g^{(2)}_{j_{\ell} + \omega_{\ell}} \label{equation:pbw} \end{equation}
	for $1 \leqslant \ell \leqslant p$.
	
    According to \cite[Theorem 4.6]{premet2002slices}, the W-algebra admits a PWB basis: this means that one can find elements of the form
	$$ Y_{u} \defeq y_u + \sum_{i_1, \dots, i_p \in \Z} \alpha_{i_1, \dots, i_p}^u {y_1}^{i_1} \cdots {y_p}^{i_p} \bmod \widehat{I_{1}},$$
	for $1 \leqslant u \leqslant c$, such that the ordered monomials 
	$$ {Y_1}^{i_1} \cdots {Y_c}^{i_c}, \quad \text{where} \quad i_1, \dots, i_c \in \Z_{\geqslant 0}, $$
	form a $\C$-linear basis of $\Univ(\g, f_1)$. 
		
	\begin{remark}
		Premet's original proof uses positive characteristic machinery. In \cite[Theorem 3.6]{brundan2008highest-weight}, one can find another proof using only characteristic 0.
	\end{remark}
 
	To carry on, we need to introduce the action of $q_0$ on $\Univ(\g, f_1)$.
	
	\begin{claim}
		The adjoint action of $q_0$ on $\Univ(\g)$ descends to $\Univ(\g, f_1)$ and the PBW basis (\ref{equation:pbw}) is $q_0$-homogeneous.
	\end{claim}
	
	\begin{proof}[Proof of the claim]
		Let us prove that $q_0 \bmod \widehat{I_1} \in \Univ(\g, f_1)$. We notice that for all $y \in \m_0$:
		$$ \chi_1([y, q_0]) = (f_1 \mid [y, q_0]) = (y \mid [q_0, f_1]) = 0 $$
		because $[q_0, f_1] = 0$. Hence, since $\m_1$ is $q_0$-stable:
		$$ [y, q_0] = [y, q_0] - \chi_1([y,q_0]) \in \widehat{I_{1}}. $$
		Hence this action is well-defined. Because $q_0$ is semisimple, this action is diagonalizable. 	
		
		We compute the bracket $[q_0, Y_u]$, it is equal to
		$$ \omega_u y_u + \sum_{i_1, \dots, i_p \in \Z} (i_1 \omega_1 + \cdots + i_p \omega_p) \alpha_{i_1, \dots, i_p}^u {y_1}^{i_1} \cdots {y_p}^{i_p} \bmod \widehat{I_{1}}. $$
		$[q_0, Y_u]$ can be written in a unique way in the PBW basis, which is entirely determined by the first term $\omega_u y_u$, which is the same as $\omega_u Y_u$. Whence $[q_0, Y_u] = \omega_u Y_u$. 
	\end{proof}
	
	As a side product of the previous proof, we get that when $\alpha_{i_1, \dots, i_p}^u$ is not zero, then necessarily $i_1 \omega_1 + \cdots + i_p \omega_p = \omega_u$. Hence we can write:
	$$ Y_{u} = y_u + \sum_{\substack{i_1, \dots, i_p \in \Z_{\geqslant 0} \\ i_1 \omega_1 + \cdots + i_p \omega_p = \omega_u}} \alpha_{i_1, \dots, i_p}^u {y_1}^{i_1} \cdots {y_p}^{i_p} \bmod \widehat{I_{1}} $$
	Moreover, if we add the condition $\sum_{\ell = 1}^p i_{\ell}(2 - j_{\ell})= 2 - j_u$, we have also:
	$$ \sum_{\ell = 1}^p i_{\ell}(2 - j_{\ell} - \omega_{\ell})= 2 - j_u - \omega_u.$$ 
	
	\begin{claim}
		The (non-graded) isomorphism
        $$ \gr^{(1)} Q_1 \overset{\sim}{\longrightarrow} \gr^{(2)} Q_1, $$
        induced by the Formula (\ref{equation:change-grading}), restricts to an isomorphism
        $$ \gr^{(1)} \Univ(\g, f_1) \cong \gr^{(2)} \Univ(\g, f_1). $$
	\end{claim}
	
	\begin{proof}[Proof of the claim]
		By restriction, (\ref{equation:change-grading}) induces an injection 
        $$ \gr^{(1)} \Univ(\g, f_1) \longrightarrow \gr^{(2)} \Univ(\g, f_1). $$
        To prove the claim, we need to prove surjectivity and to do so, we use the PBW basis: it is enough to prove that (\ref{equation:change-grading}) makes $[Y_u]^{(1)}_{1 - j_u}$ mapped to $[Y_u]^{(2)}_{1 - j_u - \omega_u}$. 
		
		To simplify the notations, we omit ``$\bmod \widehat{I_1}$'' in the definition of $Y_u$. We already know that
		$$ Y_u = y_u + \sum_{i_1, \dots, i_p \in \Z_{\geqslant 0}} \alpha_{i_1, \dots, i_p}^u {y_1}^{i_1} \cdots {y_p}^{i_p}. $$
		In the associated graded algebra, 
		$$ [Y_u]^{(1)}_{2 - j_u} = [y_u]^{(1)}_{2 - j_u} + \sum_{\sum_{\ell=1}^p i_{\ell}(2 - j_{\ell}) = 2 - j_u} \alpha_{i_1, \dots, i_p}^u \left[{y_1}^{i_1} \cdots {y_p}^{i_p}\right]^{(1)}_{2 - j_u}. $$
		We use the definition of the graded algebra product to write:
		$$ \left[{y_1}^{i_1} \cdots {y_p}^{i_p}\right]^{(1)}_{2 - j_u} = \left(\left[{y_1}\right]^{(1)}_{2 - j_1}\right)^{i_1} \cdots \left(\left[{y_p}\right]^{(1)}_{2 - j_p}\right)^{i_p}. $$
		After taking the sum, we get that $[Y_u]^{(1)}_{2 - j_u}$ is equal to
		$$  [y_u]^{(1)}_{2 - j_u} + \sum_{\sum_{\ell=1}^p i_{\ell}(2 - j_{\ell}) = 2 - j_u} \alpha_{i_1, \dots, i_p}^u \left(\left[{y_1}\right]^{(1)}_{2 - j_1}\right)^{i_1} \cdots \left(\left[{y_p}\right]^{(1)}_{2 - j_p}\right)^{i_p}. $$
		
		Under the map defined in (\ref{equation:change-grading}), we have:
		$$ \left[y_{\ell}\right]^{(1)}_{2 - j_{\ell}} \longmapsto \left[y_{\ell}\right]^{(2)}_{2 - j_{\ell} - \omega_{\ell}}. $$
		So, we deduce that $[Y_u]^{(1)}_{2 - j_u}$ is mapped to
		$$ [y_u]^{(2)}_{2 - j_u - \omega_u} + \sum_{\sum_{\ell=1}^p i_{\ell}(2 - j_{\ell}) = 2 - j_u} \alpha_{i_1, \dots, i_p}^u \left(\left[{y_1}\right]^{(2)}_{2 - j_1 - \omega_1}\right)^{i_1} \cdots \left(\left[{y_p}\right]^{(2)}_{2 - j_p - \omega_p}\right)^{i_p}.$$
		
		We use the consequence of the previous claim. In the sum defining $Y_u$, the terms are non-zero only if $ \sum_{\ell = 1}^p i_{\ell}(2 - j_{\ell} - \omega_{\ell})= 2 - j_u - \omega_u$. With this condition:
		$$  \left(\left[{y_1}\right]^{(2)}_{2 - j_1 - \omega_1}\right)^{i_1} \cdots \left(\left[{y_p}\right]^{(2)}_{2 - j_p - \omega_p}\right)^{i_p} = \left[{y_1}^{i_1} \cdots {y_p}^{i_p}\right]^{(2)}_{2 - j_u - \omega_u}, $$
		and taking the sum we get that $[Y_u]^{(1)}_{2 - j_u}$ is mapped to:
		$$ [y_u]^{(2)}_{2 - j_u - \omega_u} + \sum_{\sum_{\ell = 1}^p i_{\ell}(2 - j_{\ell} - \omega_{\ell})= 2 - j_u - \omega_u} \alpha_{i_1, \dots, i_p}^u \left[{y_1}^{i_1} \cdots {y_p}^{i_p}\right]^{(2)}_{2 - j_u - \omega_u} $$
		which is equal to $[Y_u]^{(2)}_{2 - j_u - \omega_u}$. 
		
		We can conclude that indeed we have the mapping
		$$ [Y_u]^{(1)}_{1 - j_u} \longmapsto [Y_u]^{(2)}_{1 - j_u - \omega_u}. $$
	\end{proof}
	
	By concatenating all the commutative diagrams we have built, we can conclude the proof of the proposition.
\end{proof}

\begin{remark}
	By similar argument as in the previous proof, one can show that the isomorphism $ \Poi^{(2)}_{\bullet} (\g, f_1) \cong \gr^{(2)}_{\bullet} \Univ(\g, f_1)$ is compatible with Poisson structures. Hence $\Univ(\g, f_1)$ is a quantization of $\Poi(\g, f_1)$ for the Kazhdan filtration induced by $\Gamma^2$. 
\end{remark}

The following proposition is an analogue of Theorem \ref{theorem:quantization-slodowy} for $\Univ_0$, the second stage of reduction. 

\begin{proposition} \label{proposition:green-diagram}
	The previous isomorphism $ \Poi^{(2)}_{\bullet}(\g, f_1)  \cong \gr^{(2)}_{\bullet} \Univ(\g, f_1) $ makes the following diagram commute.
	$$\begin{tikzcd}
		\Poi^{(2)}_{\bullet}(\g, f_1)  \arrow[r, "\sim"] \arrow[d, two heads] & \gr^{(2)}_{\bullet} \Univ(\g, f_1) \arrow[d, two heads] \\
		(\Poi(\g, f_1)/I_0)^{(2)}_{\bullet} \arrow[r, "\sim"] & \gr^{(2)}_{\bullet} Q_0 \\
		\left((\Poi(\g, f_1)/I_0)^{\ad(\m_0)}\right)^{(2)}_{\bullet} \arrow[r, "\sim"] \arrow[u, hook] & \gr^{(2)}_{\bullet} \Univ_0 \arrow[u, hook]
	\end{tikzcd} $$
\end{proposition}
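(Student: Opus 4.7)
The plan is to imitate the proof of Proposition~\ref{proposition:blue-diagram} one stage deeper. We already have the isomorphism $\Poi^{(2)}_{\bullet}(\g, f_1) \cong \gr^{(2)}_{\bullet} \Univ(\g, f_1)$ from that proposition, so the task decomposes into two pieces: first, compare the ideals $I_0 \subseteq \Poi(\g, f_1)$ and $\widehat{I_0} \subseteq \Univ(\g, f_1)$ under this isomorphism (upper square); second, show that passing to $\ad(\m_0)$-invariants commutes with taking $\gr^{(2)}$ (lower square).

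For the upper square, fix a vector space decomposition $\m_2 = \m_1 \oplus \m_0$ as in the hypotheses and extend it to a decomposition $\g = \m_1 \oplus \m_0 \oplus \mathfrak{a} \oplus \Ker\ad(f_2)$ chosen to be $\h$-stable; then pick bases of each summand consisting of common eigenvectors of $q_1$ and $q_0$, and hence of $q_2 = q_1 + q_0$. Since $\m_0 \subseteq \g_0^{(1)}$, an element $y \in \m_0 \cap \g(q_0,\omega)$ lies in $\g^{(2)}_{\omega}$, so $y - \chi_2(y) \bmod \widehat{I_1}$ has Kazhdan degree $2-\omega$ with respect to $\Gamma^2$, and its principal symbol under $\gr^{(2)}_{\bullet}\Univ(\g, f_1) \cong \Poi^{(2)}_{\bullet}(\g, f_1)$ is exactly $(y - \chi_2(y)) \bmod I_1$ (with $\chi_2(y)$ only surviving when $\omega = 2$, matching the classical generator). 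A direct PBW-style argument, parallel to Claim~\ref{claim:gr-ideals}, then identifies $\gr^{(2)}\widehat{I_0}$ with $I_0$, whence the universal property of quotients yields the induced graded isomorphism $(\Poi(\g, f_1)/I_0)^{(2)}_{\bullet} \cong \gr^{(2)}_{\bullet} Q_0$.

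For the lower square, the inclusion $\gr^{(2)}_{\bullet} \Univ_0 \hookrightarrow \bigl(\gr^{(2)}_{\bullet} Q_0\bigr)^{\ad(\m_0)}$ is formal. The reverse inclusion is the main obstacle and requires producing, for every class in $(\Poi(\g,f_1)/I_0)^{\ad(\m_0)}$, a Kazhdan-homogeneous invariant lift in $\Univ_0$. I plan to construct such lifts by following Premet's PBW approach one more time: starting from the basis $\{Y_u\}$ of $\Univ(\g, f_1)$ indexed by a homogeneous basis of $\Ker\ad(f_1)$, select the subset whose leading terms lie in $\Ker\ad(f_2) \subseteq \Ker\ad(f_1)$, and modify them inductively on Kazhdan degree by lower-order elements of $\widehat{I_0}$ to annihilate the $\ad(\m_0)$-action modulo $\widehat{I_0}$. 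The $q_0$-action on $\Univ(\g, f_1)$, which exists and integrates to $\m_0$ just as in the second claim inside the proof of Proposition~\ref{proposition:blue-diagram}, gives a weight decomposition that makes the induction manageable; the freeness established in Main Theorem~\ref{main:freeness} guarantees at the classical level that such invariant lifts exist and assemble into a PBW-type basis of $\Univ_0$. Once this is done, Lemma~\cite[Lemma~6.8]{kamnitzer2022hamiltonian} (filtrations are nonnegative and exhaustive) delivers the equality of graded pieces, and concatenating this with the upper square completes the diagram.

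The genuinely technical step is the inductive construction of the $\ad(\m_0)$-invariant lifts: one must verify that the obstruction at each Kazhdan degree lies in the image of an operator whose classical counterpart is surjective by the freeness of the $M_0$-action on $\mu_0^{-1}(0)$. This is the quantum reflection of Claim~\ref{claim:freeness}, and it is where Main Theorem~\ref{main:freeness} is used in an essential way.
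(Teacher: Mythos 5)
Your upper square is essentially the paper's argument: fix a $q_1$- and $q_0$-homogeneous PBW basis of $\Univ(\g,f_1)$ containing a basis of $\m_0$ and repeat the argument of Claim~\ref{claim:gr-ideals} to identify $\gr^{(2)}\widehat{I_0}$ with $I_0$; that part is sound (one small correction: the decomposition should single out $\Ker\ad(f_1)$, not $\Ker\ad(f_2)$, as a summand, since Premet's PBW generators are indexed by a basis of $\Ker\ad(f_1)$).

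The gap is in the lower square. First, the inclusion $\Ker\ad(f_2)\subseteq\Ker\ad(f_1)$ on which your selection of generators rests is false in general: already for the hook pair $f_1=e_{2,1}$, $f_2=e_{2,1}+e_{3,2}$ in $\sl_3$ one has $[f_2,f_1]=e_{3,1}\neq0$, so ``the subset of the $Y_u$ whose leading terms lie in $\Ker\ad(f_2)$'' does not exist; the hypotheses only give $\m_0\subseteq\Ker\ad(f_1)$ (Lemma~\ref{lemma:f1-m0}), and the natural complement is a choice of $\mathfrak{b}$ with $\Ker\ad(f_1)=\m_0\oplus\mathfrak{b}$, whose dimension moreover exceeds $\dim\Ker\ad(f_2)$, so no subset of the PBW generators matches the expected invariants. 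Second, and more importantly, the step you yourself label ``genuinely technical'' is precisely the content of the proposition and is only asserted. To correct a lift of an $\ad(\m_0)$-invariant symbol degree by degree you must kill, at each step, a $1$-cocycle of $\m_0$ with values in $\gr^{(2)}Q_0\cong\C[{\mu_0}^{-1}(0)]$; what is needed is therefore the vanishing of the first (in fact all higher) Chevalley--Eilenberg cohomology of this module, not merely the identification of its invariants ($\coH^0$), which is all that freeness of the $M_0$-action gives directly. The paper obtains this vanishing in Claim~\ref{claim:derham} by combining the $\m_0$-module isomorphism $\C[{\mu_0}^{-1}(0)]\cong\C[M_0]\otimes_{\C}\C[\Slice_2]$ from Theorem~\ref{theorem:freeness} with the unipotence of $M_0$, which turns $\coH^{\bullet}(\C[M_0]\otimes_{\C}\Alt^{\bullet}(\m_0)^*)$ into the de Rham cohomology of an affine space; moreover the termination of your induction (equivalently, the convergence of the paper's spectral sequence) needs the separate fact that the Kazhdan filtration on $Q_0$ is non-negative, which the paper proves via the decomposition $\g=\m_1\oplus\m_0\oplus\mathfrak{b}\oplus\mathfrak{a}$. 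With these two ingredients supplied, your degree-by-degree correction becomes an unrolled version of the paper's spectral-sequence argument; without them, the key surjectivity is unproved and the proof is incomplete.
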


\begin{proof}
	For this proof, we explain why the arguments of Gan and Ginzburg \cite{gan2002quantization} can be used in this context. As before we consider decompositions
	$$ \g = \m_1 \oplus \Ker \ad(f_1) \oplus \mathfrak{a}, \quad \Ker \ad(f_1) = \m_0 \oplus \mathfrak{b}, $$
	where all subspaces are equipped by gradings induced by $\Gamma_1$ and $\Gamma_2$. 
	
	Considering a $q_1$, $q_1$ and $q_0$-homogeneous basis adapted to this decomposition, we can take a PBW basis $(Y_u)_{1 \leqslant u \leqslant c}$ of $\Univ(\g, f_1)$ which contains a basis of $\m_0$. Hence, as we did in the proof of Claim \ref{claim:gr-ideals}, we can establish that the isomorphism $$ \Poi(\g, f_1) \overset{\sim}{\longrightarrow} \gr^{(2)} \Univ(\g, f_1) $$ maps $I_0$ to $\gr^{(2)} \widehat{I_0}$. Hence, the top rectangle commutes.
	
	In fact, the map $\Poi(\g, f_1)/I_0 \rightarrow \gr^{(2)} Q_0$ is $\m_0$-equivariant, hence for $m \in \Z_{\geqslant 0}$ we have the diagram:
	$$\begin{tikzcd}
		\left(\Poi(\g, f_1)/I_0\right)^{(2)}_m \arrow[r, "\sim"] & \gr^{(2)}_m Q_0 \\
		\left((\Poi(\g, f_1)/I_0)^{\ad(\m_0)}\right)^{(2)}_m \arrow[r, "\sim"] \arrow[u, hook] & \left(\gr^{(2)}_m Q_0 \right)^{\ad(\m_0)} \arrow[u, hook]
	\end{tikzcd}  $$
	To finish the proof of the claim, it remains to show that there is an isomorphism
	$$ \gr^{(2)}_m \Univ_0 \cong \left(\gr^{(2)}_m Q_0 \right)^{\ad(\m_0)}. $$ 
	
	To do so, as Gan and Ginzburg did, we introduce the \textit{Chevalley-Eilenberg cohomology} \cite[Section 7.7]{weibel1997} associated to $Q_0$. The corresponding complex is
	$$ Q_0 \otimes_{\C} \Alt^{\bullet}(\m_0)^*, $$
	where $\Alt^{\bullet}(\m_0)^*$ is the exterior algebra of $(\m_0)^*$. We recall that for any $\m_0$-module $V$, one has
	$$ \coH^{0}(V \otimes_{\C} \Alt^{\bullet}(\m_0)^*) = V^{\m_0}. $$
	
	If we rephrase the desired statement, we need to show an isomorphism
	$$ \gr^{(2)}_m \coH^0(Q_0 \otimes_{\C} \Alt^{\bullet}(\m_0)^*) \cong \coH^0\left(\gr^{(2)}_m \left(Q_0 \otimes_{\C} \Alt^{\bullet}(\m_0)^*\right)\right). $$
	To do so, we use standard arguments using spectral sequences: as a reference, we will use \cite[Chapter 5]{weibel1997}. 
	
	The complex $Q_0 \otimes_{\C} \Alt^{\bullet}(\m_0)^*$ is filtered as follows. The algebra $\m_0$ is $\Gamma^2$-graded with positive degrees, so we have an induced grading:
	$$ (\m_0)^* = \bigoplus_{k \leqslant -1} \left((\m_0)^*\right)^{(2)}_k. $$
	Hence we define $\left(Q_0\otimes_{\C} \Alt^{n}(\m_0)^*\right)^{(2)}_m$ as the subspace spanned by elements of the form  $X \otimes (\eta_1 \wedge \cdots \wedge \eta_n)  $ where $X \in (Q_0)^{(2)}_i $ and $\eta_1 \in \left((\m_0)^*\right)^{(2)}_{-k_1}, \dots, \eta_i \in \left((\m_0)^*\right)^{(2)}_{-k_i}$ verify $i + k_1 + \cdots + k_n \leqslant m$.
	
	We get a filtered complex and by \cite[Section 5.4]{weibel1997} there is an associated spectral sequence $(E_r)_{r \geqslant 0}$ such that, for all $p, q \in \Z $:
	\begin{align*}
		E^{m,n}_0 & = \gr^{(2)}_m \left(Q_0\otimes_{\C} \Alt^{m + n}(\m_0)^*\right), \\
		E^{m,n}_1 & = \coH^{m+n}\left(\gr^{(2)}_m \left(Q_0\otimes_{\C}  \Alt^{\bullet}(\m_0)^*\right)\right)  \cong \coH^{m+n}\left(\left(\C[{\mu_0}^{-1}(0)] \otimes_{\C} \Alt^{\bullet}(\m_0)^*\right)^{(2)}_m\right).
	\end{align*}
	where the second isomorphism is a consequence of the $\m_0$-isomorphism:
	$$ \gr^{(2)} Q_0  \cong \C[{\mu_0}^{-1}(0)]. $$
	
	\begin{claim}
		The filtration on $\Univ(\g, f_1) / \widehat{I_0}$ is non-negative.
	\end{claim}
	
	\begin{proof}[Proof of the claim]
		In the decomposition $ \g =  \m_1 \oplus \m_0 \oplus \mathfrak{b} \oplus \mathfrak{a},$ the subspace $\m_2 = \m_1 \oplus \m_0$ corresponds to the negative generators for the filtration $\Univ^{(2)}_{\bullet}(\g)$ and $\mathfrak{b} \oplus \mathfrak{a}$ to the positive generators. Hence, in $ \Ker \ad(f_1) = \m_0 \oplus \mathfrak{b}, $ the subspace $\m_0$ corresponds to negative generators of $\Univ(\g, f_1)$ and $\mathfrak{b}$ to positive ones. So, after taking the quotient by $\widehat{I_0}$, only positive generators provided by $\mathfrak{b}$ remain in $Q_0$.
	\end{proof}
	
	As a direct consequence of the previous claim, $\left(Q_0 \otimes_{\C} \Alt^{\bullet}(\m_0)^*\right)^{(2)}_{\bullet}$ is non-negatively filtered, the filtration is exhaustive and so, by \cite[Theorem 5.5.1]{weibel1997}, the spectral sequence converges to
    $$ E^{m, n}_{\infty} \cong \gr^{(2)}_{m} \coH^{m + n}\left(Q_0 \otimes_{\C} \Alt^{\bullet}(\m_0)^*\right), \quad p, q \in \Z. $$
	
	\begin{claim} \label{claim:derham}
		We have $\coH^{n}\left(\C[{\mu_0}^{-1}(0)] \otimes_{\C} \Alt^{\bullet}(\m_0)^*\right) = \{0\}$ for $n \neq 0$. 
	\end{claim}
	
	\begin{proof}[Proof of the claim]
		According to Theorem \ref{theorem:freeness}, as $\m_0$-modules we have the isomorphism:
		$$ \C[{\mu_0}^{-1}(0)] \cong \C[M_0] \otimes_{\C} \C[\Slice_2]. $$
		Hence:
		\begin{align*}
			\coH^{n}\left(\C[{\mu_0}^{-1}(0)] \otimes_{\C} \Alt^{\bullet}(\m_0)^*\right) & \cong \coH^{n}\left(\C[M_0] \otimes_{\C} \C[\Slice_2] \otimes_{\C} \Alt^{\bullet}(\m_0)^*\right) \\
			& \cong \coH^{n}\left(\C[M_0] \otimes_{\C} \Alt^{\bullet}(\m_0)^*\right) \otimes_{\C} \C[\Slice_2].
		\end{align*}
		But $\coH^{\bullet}\left(\C[M_0] \otimes_{\C} \Alt^{\bullet}(\m_0)^*\right)$ corresponds in fact to the algebraic de Rham cohomology of $M_0$. The group $M_0$ is unipotent, so as an algebraic variety it is isomorphic to the affine space $\C^{\dim M_0}$. Its de Rham cohomology is trivial except at degree 0, where the corresponding cohomology group is isomorphic to $\C$. Whence
		$$ \coH^{\bullet}\left(\C[M_0] \otimes_{\C} \Alt^{\bullet}(\m_0)^*\right) = \coH^{0}\left(\C[M_0] \otimes_{\C} \Alt^{\bullet}(\m_0)^*\right) \cong \C. $$
		The claim follows.
	\end{proof}
	
	The previous claim implies that the spectral sequence collapses: the sequence $E_{r}$ is constant for $r \geqslant 1$ because $E_1$ is supported on one diagonal line. Hence, the natural map
	$$ \gr^{(2)}_m\coH^0\left(Q_0 \otimes_{\C} \Alt^{\bullet}(\m_0)^*\right) \longrightarrow \coH^0\left(\gr^{(2)}_m \left(Q_0\otimes_{\C}  \Alt^{\bullet}(\m_0)^*\right)\right) $$
	is an isomorphism and we get the second rectangle of the diagram, which concludes the proof of the proposition.
\end{proof}

Finally, the last proposition is a refinement of Theorem \ref{theorem:quantization-slodowy}.

\begin{proposition} \label{proposition:purple-diagram}
	The isomorphism $ \left(\Sym(\g)/I_2\right)^{(2)}_{\bullet} \cong \gr^{(2)}_{\bullet} Q_2 $ provided by Theorem \ref{theorem:quantization-slodowy} make the following diagram commute
	$$\begin{tikzcd}
		\left(\Sym(\g)/I_2\right)^{(2)}_{\bullet}\arrow[r, "\sim"] & \gr^{(2)}_{\bullet} Q_2  \\
		\left(\left(\Sym(\g)/I_2\right)^{\ad(\m_1)}\right)^{(2)}_{\bullet}  \arrow[r, "\sim"] \arrow[u, hook]& \gr^{(2)}_{\bullet} {Q_2}^{\ad(\m_1)} \arrow[u, hook] \\
		\left(\left(\Sym(\g)/I_2\right)^{\ad(\m_2)}\right)^{(2)}_{\bullet} \arrow[r, "\sim"] \arrow[u, hook] & \gr^{(2)}_{\bullet} \Univ(\g, f_2) \arrow[u, hook]
	\end{tikzcd} $$
	is commutative.
\end{proposition}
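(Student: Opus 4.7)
The plan is to adapt verbatim the strategy used in the proof of Proposition \ref{proposition:green-diagram}, replacing the pair $(Q_0, \m_0)$ by $(Q_2, \m_1)$. The top horizontal isomorphism $(\Sym(\g)/I_2)^{(2)}_{\bullet} \cong \gr^{(2)}_{\bullet} Q_2$ is given directly by Theorem \ref{theorem:quantization-slodowy} applied with the good grading $\Gamma^2$ for $f_2$, and the bottom isomorphism $((\Sym(\g)/I_2)^{\ad(\m_2)})^{(2)}_{\bullet} \cong \gr^{(2)}_{\bullet} \Univ(\g, f_2)$ is the corresponding invariant statement of the same theorem. So the real content of the proposition is the middle isomorphism, together with the commutativity of the two rectangles.

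For the middle row, I would use Chevalley-Eilenberg cohomology. Consider the complex $Q_2 \otimes_{\C} \Alt^{\bullet}(\m_1)^*$, whose degree-zero cohomology is $(Q_2)^{\ad(\m_1)}$. The $\Gamma^2$-grading induces a filtration, by defining $(Q_2 \otimes_{\C} \Alt^n(\m_1)^*)^{(2)}_m$ to be the span of elementary tensors $X \otimes \eta_1 \wedge \cdots \wedge \eta_n$ with $X \in (Q_2)^{(2)}_i$ and $\eta_j \in ((\m_1)^*)^{(2)}_{-k_j}$ satisfying $i + k_1 + \cdots + k_n \leqslant m$. This filtration is non-negative and exhaustive: since $\m_2 \subseteq \g^{(2)}_{\geqslant 1}$ is spanned by elements of strictly negative Kazhdan degree, all of which are quotiented out in $Q_2$, only non-negatively filtered generators survive in a suitable PBW basis. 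By \cite[Section 5.4 and Theorem 5.5.1]{weibel1997}, one then obtains a convergent spectral sequence
\[ E_1^{m,n} = \coH^{m+n}\bigl(\gr^{(2)}_m(Q_2 \otimes_{\C} \Alt^{\bullet}(\m_1)^*)\bigr) \Longrightarrow \gr^{(2)}_m \coH^{m+n}\bigl(Q_2 \otimes_{\C} \Alt^{\bullet}(\m_1)^*\bigr). \]

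The decisive step is the collapse of this spectral sequence at $E_1$. Using $\gr^{(2)} Q_2 \cong \C[{\mu_2}^{-1}(0)]$ from Theorem \ref{theorem:quantization-slodowy}, the $E_1$-page computes the algebraic de Rham cohomology $\coH^{\bullet}(\C[{\mu_2}^{-1}(0)] \otimes_{\C} \Alt^{\bullet}(\m_1)^*)$ of ${\mu_2}^{-1}(0)$ viewed as an $M_1$-variety. By Claim \ref{claim:isomorphism}, there is an $M_1$-equivariant isomorphism ${\mu_2}^{-1}(0) \cong M_1 \times \widetilde{\mu_0}^{-1}(0)$, so that
\[ \coH^n\bigl(\C[{\mu_2}^{-1}(0)] \otimes_{\C} \Alt^{\bullet}(\m_1)^*\bigr) \cong \coH^n\bigl(\C[M_1] \otimes_{\C} \Alt^{\bullet}(\m_1)^*\bigr) \otimes_{\C} \C[\widetilde{\mu_0}^{-1}(0)]. \]
Since $M_1$ is unipotent, hence isomorphic to an affine space as a variety, its algebraic de Rham cohomology is trivial outside degree zero. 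Therefore $E_1$ is supported on a single anti-diagonal, the sequence collapses, and we deduce the isomorphism $\gr^{(2)}_m(Q_2)^{\ad(\m_1)} \cong \bigl((\Sym(\g)/I_2)^{\ad(\m_1)}\bigr)^{(2)}_m$ as desired.

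Commutativity of the two rectangles then follows by naturality. For the top rectangle, the $\ad(\m_1)$-invariant inclusion on both the classical and quantum sides is induced by the same underlying inclusion. For the bottom rectangle, the decomposition $\m_2 = \m_1 \oplus \m_0$ identifies $(-)^{\ad(\m_2)}$ with $((-)^{\ad(\m_1)})^{\ad(\m_0)}$, so the right vertical inclusion $\Univ(\g, f_2) \hookrightarrow (Q_2)^{\ad(\m_1)}$ is exactly the inclusion of $\ad(\m_0)$-invariants, matching the corresponding classical inclusion on the left. I expect the main obstacle to be the careful verification that the filtration on the Chevalley-Eilenberg complex is non-negative and exhaustive (required for convergence of the spectral sequence); this reduces to a PBW-basis manipulation of the same nature as those used in the proofs of Propositions \ref{proposition:blue-diagram} and \ref{proposition:green-diagram}.
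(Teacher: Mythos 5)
Your treatment of the top rectangle is exactly the paper's: the same filtered Chevalley--Eilenberg complex $Q_2 \otimes_{\C} \Alt^{\bullet}(\m_1)^*$, the same convergent spectral sequence, and the same collapse argument via the $M_1$-equivariant isomorphism ${\mu_2}^{-1}(0) \cong M_1 \times \widetilde{\mu_0}^{-1}(0)$ of Claim \ref{claim:isomorphism} together with the triviality of the de Rham cohomology of the unipotent group $M_1$. Where you genuinely diverge is the bottom rectangle: the paper runs a \emph{second} spectral sequence, for the complex ${Q_2}^{\ad(\m_1)} \otimes_{\C} \Alt^{\bullet}(\m_0)^*$, collapsing via Claim \ref{claim:freeness} ($\C[{\mu_0}^{-1}(0)] \cong \C[M_0] \otimes_{\C} \C[\Slice_2]$), thereby re-proving directly that $\gr^{(2)}$ commutes with taking $\ad(\m_0)$-invariants of ${Q_2}^{\ad(\m_1)}$, exactly parallel to Proposition \ref{proposition:green-diagram}; you instead take the bottom isomorphism $\Poi^{(2)}_{\bullet}(\g,f_2) \cong \gr^{(2)}_{\bullet}\Univ(\g,f_2)$ ready-made from Theorem \ref{theorem:quantization-slodowy} and argue commutativity ``by naturality''. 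Your shortcut does work, but the naturality step deserves to be spelled out: the point is that both the middle isomorphism (by construction, since the edge map $\gr^{(2)}\coH^0 \to \coH^0\gr^{(2)}$ is induced by the inclusion ${Q_2}^{\ad(\m_1)} \subseteq Q_2$) and the bottom isomorphism (by the commutative diagram in Theorem \ref{theorem:quantization-slodowy}) are restrictions of the top isomorphism, and the map $\gr^{(2)}_{\bullet}{Q_2}^{\ad(\m_1)} \hookrightarrow \gr^{(2)}_{\bullet}Q_2$ is injective, so the bottom rectangle commutes; merely observing that $(-)^{\ad(\m_2)} = ((-)^{\ad(\m_1)})^{\ad(\m_0)}$ does not by itself compare the two horizontal arrows. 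What the paper's extra spectral sequence buys is a self-contained identification of $\gr^{(2)}_{\bullet}\Univ(\g,f_2)$ with the $\ad(\m_0)$-invariants of the middle row, which is the form in which the isomorphism is fed into Theorem \ref{theorem:gr-two-stages}; what your route buys is economy, at the cost of leaning on Gan--Ginzburg's diagram for $f_2$. One small correction: your justification of non-negativity of the filtration (``$\m_2$ is spanned by elements of strictly negative Kazhdan degree'') is inaccurate, since elements of $\g^{(2)}_1$ and $\g^{(2)}_2$ have Kazhdan degrees $1$ and $0$; non-negativity of the filtration on $Q_2$ is nonetheless true, e.g.\ because $\gr^{(2)}_{\bullet}Q_2 \cong (\Sym(\g)/I_2)^{(2)}_{\bullet} = \C[\chi_2 + {\m_2}^{\perp}]$ is non-negatively graded.
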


\begin{proof}[Sketch of proof]
	The idea of the proof rely on spectral sequences, as before. For the first rectangle, we introduce the spectral sequence
	\begin{align*}
		E^{m,n}_0 & = \gr^{(2)}_m \left(Q_2\otimes_{\C} \Alt^{m + n}(\m_1)^*\right), \\
		E^{m,n}_1 & = \coH^{m+n}\left(\gr^{(2)}_m \left(Q_2\otimes_{\C}  \Alt^{\bullet}(\m_1)^*\right)\right) \cong \coH^{m+n}\left(\left(\C[{\mu_2}^{-1}(0)] \otimes_{\C} \Alt^{\bullet}(\m_1)^*\right)^{(2)}_m\right)..
	\end{align*}
	The filtration is non-negative and exhaustive, so the spectral sequence converges to
    $$ E^{m,n}_{\infty} \cong \gr^{(2)}_{m} \coH^{m + n}\left(Q_2 \otimes_{\C} \Alt^{\bullet}(\m_1)^*\right) $$
	and it collapses because one has $ \C[{\mu_2}^{-1}(0)] \cong \C[M_1] \otimes \C[\widetilde{\mu_0}^{-1}(0)] $
	according to Claim \ref{claim:isomorphism}.
	
	For the second rectangle, we introduce the spectral sequence
	\begin{align*}
		E^{m,n}_0 & = \gr^{(2)}_m \left({Q_2}^{\ad(\m_1)}\otimes_{\C} \Alt^{m + n}(\m_0)^*\right), \\
		E^{m,n}_1 & = \coH^{m+n}\left(\gr^{(2)}_m \left({Q_2}^{\ad(\m_1)}\otimes_{\C}  \Alt^{\bullet}(\m_0)^*\right)\right) \\
		& \cong \coH^{m+n}\left(\left(\C[{\mu_0}^{-1}(0)] \otimes_{\C} \Alt^{\bullet}(\m_0)^*\right)^{(2)}_m\right).
	\end{align*}
	The filtration is non-negative so it converges to
    $$ E^{m, n}_{\infty} \cong \gr^{(2)}_{m} \coH^{m+n}\left({Q_2}^{\ad(\m_1)} \otimes_{\C} \Alt^{\bullet}(\m_0)^*\right) $$ and it collapses because one has $ \C[{\mu_0}^{-1}(0)] \cong \C[M_0] \otimes \C[\Slice_2] $ according to Claim~\ref{claim:freeness}.
\end{proof}

The two previous propositions provide the isomorphisms
$$ \left((\Poi(\g, f_1) / I_0)^{\ad(\m_0)}\right)^{(2)}_{\bullet} \cong \gr^{(2)}_{\bullet} \Univ_0 \quad \text{and} \quad \Poi^{(2)}_{\bullet}(\g, f_2)  \cong \gr^{(2)}_{\bullet} \Univ(\g, f_2) $$
we expected for Theorem \ref{theorem:gr-two-stages}. Its proof is then achieved.

\section{Hook-type nilpotent orbits and other examples}
\label{section:hook}

In this section, we provide several examples in classical type for the Hamiltonian reduction by stage. It is well-known that in all classical types, the nilpotent orbits of a simple Lie algebra can be classified using partitions \cite[Chapter 5]{collingwood1993nilpotent}. We denote a \textit{partition} $\lambda$ of a positive integer $n$ by a sequence of positive integers $\lambda \defeq (\lambda_1, \dots, \lambda_r)$ such that
$$ \lambda_1 \geqslant \cdots \geqslant \lambda_r \geqslant 1, \quad \lambda_1 + \cdots + \lambda_r = n. $$
When in a subsequence $(\lambda_a, \dots, \lambda_b)$ of $\lambda$, all $\lambda_i$'s are equal to an integer $c$, the subsequence is compactly written as $(c^{b-a+1})$. A partition $\lambda = (\lambda_1, \dots, \lambda_r)$ of $n$ can be represented by its associated \textit{Young diagram}.

To classify good gradings in classical types, Elashvili and Kac introduced the notion of \textit{pyramid} in \cite{elashvili2005grading}. Roughly speaking, pyramids are constructed by moving blocks in Young diagrams. To draw our pyramids, we use the same conventions as in \cite{brundan2007good}. From this pyramid, one can build a nilpotent element $f$ and grading $\Gamma$ defined by the adjoint action of a semisimple element $q$, which is good for $f$. To do it, we follow the convention of \cite{arakawa2022singularities}, which is slightly different from \cite{brundan2007good}. In \cite{brundan2007good}, they fixe $e$ and they deal with good gradings $\Gamma$ such that $e \in \g_2$. In \cite{arakawa2022singularities} and the present paper, one takes an alternative convention by fixing $f$ and considering $\Gamma$ such that $f \in \g_{-2}$.

We denote by $e_{i,j}$, for $1 \leqslant i,j \leqslant n$, the canonical basis of $\Mat_n$, the vector space of square matrices.

\subsection{Examples in type A} Let $r \geqslant 1$ be an integer, set $n \defeq r + 1$ and let $\sl_{n}$ be the Lie algebra of traceless matrices of size $n$, which is the simple Lie algebra of type $\mathrm{A}_r$.

\subsubsection{Hook-type partitions} 

We consider a hook-type partition of $n$: $$ \lambda^{(\ell)} \defeq (\ell, 1^{n - \ell}), \quad 1 \leqslant \ell \leqslant n. $$ The following left-aligned pyramid of shape $\lambda^{(\ell)}$ represented in Figure \ref{figure:hook}, page \pageref{figure:hook}, defines an even good grading for $ f^{(\ell)} \defeq e_{2,1} + e_{3,2} + \cdots + e_{\ell, \ell - 1}. $ The corresponding good algebra is given by
$$ \m^{(\ell)}  \defeq \operatorname{Span}\{e_{i,j} ~ | ~ 1 \leqslant i \leqslant \ell - 1, ~ i < j \leqslant n\}. $$

\begin{proposition} \label{proposition:hook}
	For any integers $1 \leqslant \ell_1 < \ell_2 \leqslant n$, set $f_1 \defeq f^{(\ell_1)}$ and $f_2 \defeq f^{(\ell_2)}$. Then the Slodowy slice of $f_2$ is a Hamiltonian reduction of the slice of $f_1$. Moreover, the finite W-algebra of $f_2$ is a Hamiltonian reduction of the W-algebra of $f_1$. 
\end{proposition}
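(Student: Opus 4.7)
The plan is to reduce the statement to Main Theorems~\ref{main:freeness} and~\ref{main:quantum-stage-reduction} by exhibiting an explicit splitting of $\m_2$ and checking the three hypotheses. Both parts of the proposition will follow simultaneously, since Main Theorems~2 and~3 yield the geometric and quantum reductions from the same set of assumptions.

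First, I will unpack the combinatorial data from the left-aligned hook pyramids of shape $(\ell_i, 1^{n-\ell_i})$. Fixing the semisimple elements $q_i = q^{(\ell_i)}$ corresponding to $\Gamma^i$, a direct reading of the pyramid gives
\[
(q_i)_{k,k} = \begin{cases} 2(\ell_i - k) & \text{if } 1 \le k \le \ell_i,\\ 0 & \text{if } k > \ell_i,\end{cases}
\]
so that $q_1, q_2$ lie in the same Cartan subalgebra $\h$ of diagonal matrices, and both gradings are even. From the explicit formula for the good algebras, $\m_1 = \m^{(\ell_1)} \subseteq \m^{(\ell_2)} = \m_2$, and the natural complement in $\m_2$ is
\[
\m_0 \defeq \Span\{e_{i,j} \mid \ell_1 \le i \le \ell_2-1,\ i < j \le n\},
\]
which is $\h$-stable and yields the direct sum decomposition $\m_2 = \m_1 \oplus \m_0$.

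Next I will verify the three hypotheses of Main Theorem~\ref{main:freeness}. For hypothesis (1), closure of $\m_0$ under the bracket and the fact that $\m_1$ is an ideal of $\m_2$ both reduce to inspection of the commutator formula $[e_{i,j}, e_{k,l}] = \delta_{jk} e_{i,l} - \delta_{il} e_{k,j}$ and the fact that if $(i,j)$ has $i \le \ell_1 - 1$ and $(k,l)$ has $k \le \ell_2 - 1$, then either resulting index pair again has its row index bounded by $\ell_1 - 1$. For hypothesis (2), I note that $f_0 = f_2 - f_1 = e_{\ell_1+1,\ell_1} + \cdots + e_{\ell_2, \ell_2 - 1}$ involves only indices $\ge \ell_1$, on which $q_1$ vanishes, so $f_0 \in \g^{(1)}_0$; the same argument shows $\m_0 \subseteq \g^{(1)}_0$. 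Finally for hypothesis (3), I compute $q_0 = q_2 - q_1$ and observe that it is constant equal to $2(\ell_2 - \ell_1)$ on the indices $k \le \ell_1$; since $f_1$ is supported on root vectors $e_{i+1,i}$ with $i+1 \le \ell_1$, the commutator $[q_0, f_1]$ vanishes.

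Once these three hypotheses are confirmed, Main Theorem~\ref{main:geometrical-stage-reduction} immediately provides the isomorphism $\Slice_{f_2} \cong \Slice_{f_1} /\!/ M_0$, where $M_0 \subseteq G$ is the unipotent subgroup with Lie algebra $\m_0$, and Main Theorem~\ref{main:quantum-stage-reduction} gives the corresponding quantum reduction $\Univ(\sl_n, f_2) \cong \Univ_0$. I do not anticipate a serious obstacle; the whole proof is a conceptually clean bookkeeping exercise on indices in $\sl_n$. The only mild subtlety is the compatibility of conventions between the pyramid, the semisimple element $q_i$, and the convention $f_i \in \g^{(i)}_{-2}$ used throughout the paper, so I would devote a short paragraph to fixing those before running the three verifications.
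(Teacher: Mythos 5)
Your proposal is correct and follows exactly the paper's route: the paper's proof of Proposition~\ref{proposition:hook} is the one-line application of Theorems~\ref{theorem:geometrical-stage-reduction} and \ref{theorem:quantum-stage-reduction} to $\g=\sl_n$, $f_1=f^{(\ell_1)}$, $f_2=f^{(\ell_2)}$, $\m_i=\m^{(\ell_i)}$, and your explicit $\m_0=\Span\{e_{i,j}\mid \ell_1\leqslant i\leqslant \ell_2-1,\ i<j\leqslant n\}$ together with the index checks of hypotheses (1)--(3) is precisely the verification the paper leaves implicit (note only that for these even gradings the ideal property of $\m_1$ also follows for free from $\m_0\subseteq\g^{(1)}_0$, and that your $q_i$ differ from the traceless grading elements by a central matrix, which affects none of the commutator computations).
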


\begin{proof}
	We apply Theorems \ref{theorem:geometrical-stage-reduction} and \ref{theorem:quantum-stage-reduction} to $\g = \sl_{n}$, $f_1$, $f_2$ and $\m_i \defeq \m^{(\ell_i)}$.
\end{proof}

\subsubsection{Rectangular and minimal nilpotent orbits in rank $3$}

Set $n = 4$. Let us pick
$$ f_1 \defeq e_{2,1} \quad \text{and} \quad f_2 \defeq  e_{2,1} + e_{4,3} $$
two nilpotent elements in $\sl_4$, respectively associated to the partitions $(2,1^2)$ and $(2^2)$. Their are obviously embedded in $\sl_2$-triples $(e_i, h_i, f_i)$, $i=1,2$, where 
$$e_1 \defeq e_{1,2}, \quad e_2 \defeq  e_{1,2} + e_{3,4} $$
and
$$ h_1 \defeq e_{1,1} - e_{2,2}, \quad h_2 \defeq e_{1,1} - e_{2,2} + e_{3,3} - e_{4,4}. $$

The adjoint actions of $h_1$ and $h_2$
induce good gradings (in fact Dynkin gradings) for, respectively, $f_1$ and $f_2$. The grading induced by $h_2$ is even and the corresponding unipotent group is
$$ M_2 = \left\{\left(\begin{smallmatrix}
	1 & * & 0 & * \\
	0 & 1 & 0 & 0 \\
	0 & * & 1 & * \\
	0 & 0 & 0& 1
\end{smallmatrix}\right)\right\}. $$
With the good choice of a Lagrangian subspace, a unipotent group corresponding to $f_1$ is
$$ M_1 = \left\{\left(\begin{smallmatrix}
	1 & * & 0 & * \\
	0 & 1 & 0 & 0 \\
	0 & * & 1 & 0  \\
	0 & 0 & 0 & 1
\end{smallmatrix}\right)\right\}. $$

\begin{proposition} 
	The Slodowy slice of $f_2$ is a Hamiltonian reduction of the slice of $f_1$. In the same way, the finite W-algebra of $f_2$ is a Hamiltonian reduction of the W-algebra of $f_1$. 
\end{proposition}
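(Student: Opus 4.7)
The plan is to verify that the pair $(f_1, f_2)$ with the specified gradings and good algebras satisfies the three hypotheses of Main Theorem~\ref{main:freeness}, after which the conclusion follows immediately from Theorems~\ref{theorem:geometrical-stage-reduction} and~\ref{theorem:quantum-stage-reduction}. Reading off the Lie algebras of the unipotent groups, we have
\begin{equation*}
\m_2 = \Span\{e_{1,2}, e_{1,4}, e_{3,2}, e_{3,4}\}, \qquad \m_1 = \Span\{e_{1,2}, e_{1,4}, e_{3,2}\},
\end{equation*}
so the natural candidate for the complement is $\m_0 \defeq \Span\{e_{3,4}\}$, which gives the vector space decomposition $\m_2 = \m_1 \oplus \m_0$.

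First, I would check condition (1). Since $\m_0$ is one-dimensional it is trivially a Lie subalgebra, and it is stable under the diagonal Cartan $\h$ because $e_{3,4}$ is a root vector. To see that $\m_1$ is an ideal of $\m_2$, compute $[\m_0, \m_1]$ via $e_{i,j}e_{k,l} = \delta_{j,k}e_{i,l}$: each of $[e_{3,4}, e_{1,2}]$, $[e_{3,4}, e_{1,4}]$, $[e_{3,4}, e_{3,2}]$ vanishes, so in fact $\m_2 = \m_1 \oplus \m_0$ is a direct sum of Lie algebras.

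Next, for condition (2), compute $f_0 \defeq f_2 - f_1 = e_{4,3}$ and observe that $q_1 = h_1 = e_{1,1} - e_{2,2}$ commutes with both $e_{4,3}$ and $e_{3,4}$, so $f_0, \m_0 \subseteq \g^{(1)}_0$. For condition (3), we have $q_0 \defeq q_2 - q_1 = e_{3,3} - e_{4,4}$, and $[q_0, f_1] = [e_{3,3}-e_{4,4}, e_{2,1}] = 0$ since the root $-\alpha_{2,1}$ pairs trivially with $q_0$.

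With the three hypotheses verified, Theorem~\ref{theorem:geometrical-stage-reduction} yields $\Slice_{f_2} \cong \Slice_{f_1} /\!/ M_0$ and Theorem~\ref{theorem:quantum-stage-reduction} yields the corresponding statement for the finite W-algebras. There is no real obstacle here, as the proof is just a straightforward verification; the only subtle point to mention is that $\m_1$ is built using a Lagrangian $\mathfrak{l}_1 \subseteq \g^{(1)}_1$, so one should state which Lagrangian has been fixed in order to identify $\m_1$ with the displayed matrix group (the given $M_1$ corresponds to choosing $\mathfrak{l}_1 = \Span\{e_{1,2}\} \oplus \Span\{e_{3,2}\}$ inside $\g^{(1)}_1$, which is consistent with the computation above).
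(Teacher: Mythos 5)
Your proposal follows essentially the same route as the paper: the paper's proof is literally ``apply Theorems \ref{theorem:geometrical-stage-reduction} and \ref{theorem:quantum-stage-reduction} to $\g=\sl_4$, $f_1$, $f_2$, $M_1$, $M_2$'', and your explicit verification of hypotheses (1)--(3) with $\m_0=\Span\{e_{3,4}\}$, $f_0=e_{4,3}$, $q_1=h_1$, $q_0=e_{3,3}-e_{4,4}$ is exactly the intended check, and it is correct. One slip in your closing remark, though: $e_{1,2}$ lies in $\g^{(1)}_2$ (it has $\ad(h_1)$-weight $2$), not in $\g^{(1)}_1$, so it cannot be part of the Lagrangian. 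Here $\g^{(1)}_1=\Span\{e_{1,3},e_{1,4},e_{3,2},e_{4,2}\}$ with symplectic form $\chi_1([\cdot,\cdot])$ pairing $e_{1,3}\leftrightarrow e_{3,2}$ and $e_{1,4}\leftrightarrow e_{4,2}$, and the Lagrangian realizing the displayed $M_1$ is $\mathfrak{l}_1=\Span\{e_{1,4},e_{3,2}\}$ (isotropic since $[e_{1,4},e_{3,2}]=0$), giving $\m_1=\mathfrak{l}_1\oplus\C e_{1,2}$ as required; with this correction your identification of $\m_1$, and hence the whole verification, stands.
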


\begin{proof}
	We apply Theorems \ref{theorem:geometrical-stage-reduction} and \ref{theorem:quantum-stage-reduction} to $\g = \sl_{4}$, $f_1$, $f_2$, $M_1$ and $M_2$.
\end{proof}

\subsection{Examples in type B} Set $n \defeq 2r + 1$ for $r \geqslant 2$ an integer. We realize the simple Lie algebra of type $\mathrm{B}_r$ as the following subalgebra of $\sl_n$:
$$ \so_n \defeq \{x \in \sl_n ~ | ~ x^{\mathsf{T}} K + K x= 0 \}, \quad K \defeq \left(\begin{smallmatrix}
	(0) & & 1 \\
	& \iddots &  \\
	1 & & (0)
\end{smallmatrix}\right). $$
where $x^{\mathsf{T}}$ denotes the transpose of $x$. As in \cite{elashvili2005grading, brundan2007good}, we take the symmetric set
$$ I_n \defeq \{-r, \dots, -2, -1, 0,  1, 2, \dots, r\} $$
as indexation for the canonical basis of $\C^n$, with the following order:
$$ v_{-r} = (1,0, \dots, 0), \quad v_{-r + 1} = (0, 1, \dots, 0), \quad \dots \quad v_{r} = (0, 0, \dots, 1). $$
We change the numbering of the elementary matrices $e_{i,j}$ to have $i,j \in I_n$ and to respect the order we have chosen for the basis.

Consider the regular nilpotent element
$$ f_2 \defeq  \sum_{i=0}^{r-1} e_{i+1, i} - \sum_{i=0}^{r-1} e_{-i, -i-1} $$
corresponding to the regular orthogonal partition $\lambda_{2} \defeq (n)$. We consider the Dynkin grading which is induced by the adjoint action of the diagonal matrix $ q_2 \defeq \operatorname{diag}(2r ,\dots, 4, 2,0, -2, -4, \dots ,-2r). $ The corresponding good algebra is
the nilradical of $\so_n$ consisting in strictly upper-triangular matrices: $$\m_2 = \Span\left\{e_{i,j}~  | ~   i < j \in I_n \right\} \cap \so_n. $$

We consider the subregular orthogonal partition $\lambda_1 \defeq (n - 2, 1^2)$ of $n$. We associate to it the orthogonal pyramid of Figure \ref{figure:orth-subregular}. From this pyramid we can construct the nilpotent element $$f_1 \defeq \sum_{i=0}^{r-2} e_{i+1, i} - \sum_{i=0}^{r-2} e_{-i, -i-1} $$ of type $\lambda_1$ and an even good grading for $f_1$ induced by the adjoint action of $ q_1 \defeq \operatorname{diag}(2r - 2, 2r -2, 2r -4, 2r- 6, \dots, 2, 0, 2, \dots, 6 - 2r, 4 - 2r, 2 - 2r, 2 - 2r)$. The corresponding good algebra is:
$$ \m_1 = \Span\left\{e_{i,j} ~  | ~ i < j \in I_n, ~ i < r - 1, ~  j > - r + 1 \right\} \cap \so_n. $$

By applying Theorems \ref{theorem:geometrical-stage-reduction} and \ref{theorem:quantum-stage-reduction} to $f_1$, $f_2$, $\m_1$ and $\m_2$, we get the following proposition.

\begin{proposition}
	The Slodowy slice to a regular nilpotent element in $\so_n$ is a Hamiltonian reduction of the slice to a subregular nilpotent element in $\so_n$. Moreover, the regular finite W-algebra of $\so_n$ is a Hamiltonian reduction of the finite W-algebra associated to a subregular nilpotent element in $\so_n$. 
\end{proposition}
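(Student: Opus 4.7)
The plan is to invoke Main Theorems~\ref{main:geometrical-stage-reduction} and~\ref{main:quantum-stage-reduction} with the quadruple $(f_1, f_2, \m_1, \m_2)$ constructed above; it therefore suffices to verify the three hypotheses of Main Theorem~\ref{main:freeness}. First I would compute the difference data directly from the explicit formulas: the outcome should be $f_0 := f_2 - f_1 = e_{r, r-1} - e_{-r+1, -r}$, which involves only the extremal-adjacent positions $\{-r, -r+1, r-1, r\}$, and $q_0 := q_2 - q_1 = 2 e_{-r, -r} - 2 e_{r, r}$, supported solely at the positions $\pm r$. This reflects the fact that $q_1$ and $q_2$, constructed from the subregular and regular orthogonal pyramids, agree on every intermediate diagonal position.

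With these in hand, hypothesis (3) is immediate: each term of $f_1$ involves only positions in $\{-r+1, \dots, r-1\}$, on which $q_0$ vanishes, so $[q_0, f_1] = 0$. For hypothesis (2) applied to $f_0$, I would use that $q_1$ takes the same value $2r-2$ at positions $-r$ and $-r+1$, and the same value $2-2r$ at positions $r-1$ and $r$; hence $f_0$ has $q_1$-weight zero and lies in $\g^{(1)}_0$. To set up hypothesis (1), I would compare the explicit spanning sets of $\m_1$ and $\m_2$ and read off that, modulo the $\so_n$-involution $(i,j) \leftrightarrow (-j,-i)$, there is a unique missing basis element. This leads to the natural candidate $\m_0 := \C \cdot (e_{r-1,r} - e_{-r,-r+1})$, which is automatically a one-dimensional $\h$-stable Lie subalgebra (spanned by a Cartan weight vector), and which sits in $\g^{(1)}_0$ by the same weight computation used for $f_0$.

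The remaining condition is that $\m_1$ is an ideal of $\m_2$. Since $\Gamma^1$ is an even good grading, a short weight check on the spanning set of $\m_2$ should yield the identification $\m_1 = \m_2 \cap \g^{(1)}_{\geqslant 2}$, after which $[\m_0, \m_1] \subseteq [\g^{(1)}_0, \g^{(1)}_{\geqslant 2}] \cap \m_2 \subseteq \m_1$ follows. I expect no serious obstacle beyond this combinatorial bookkeeping: the entire argument rests on the fact that the orthogonal pyramid chosen for $\lambda_1 = (n-2, 1^2)$ is set up so that $q_1$ agrees with the regular $q_2$ on the $(n-2)$ middle positions, and differs from it only on the two extremal positions. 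Once this compatibility is confirmed, the two Main Theorems deliver both the Hamiltonian reduction at the level of Slodowy slices and its quantization at the level of finite W-algebras simultaneously.
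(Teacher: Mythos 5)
Your proposal is correct and follows essentially the same route as the paper: the paper simply applies Theorems~\ref{theorem:geometrical-stage-reduction} and~\ref{theorem:quantum-stage-reduction} to the explicit data $f_1, f_2, \m_1, \m_2$ from the orthogonal pyramids, leaving the verification of the hypotheses implicit, and your computations ($f_0 = e_{r,r-1} - e_{-r+1,-r}$, $q_0 = 2e_{-r,-r} - 2e_{r,r}$, $\m_0 = \C\,(e_{r-1,r} - e_{-r,-r+1})$, and the checks of hypotheses (1)--(3) via the $q_1$-weights) are exactly the bookkeeping that justifies this application. In particular your use of the evenness of $\Gamma^1$ to get the ideal property for free matches the remark following Theorem~\ref{theorem:geometrical-stage-reduction}.
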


\subsection{Examples in type C} 

Set $n = 2r$ for $r \geqslant 3$ an integer. We realize the simple Lie algebra of type $\mathrm{C}_r$ as the following subalgebra of $\sl_n$:
$$ \sp_n \defeq \{x \in \sl_n ~ | ~ x^{\mathsf{T}} J + J x= 0 \}, \quad J \defeq \left(\begin{smallmatrix}
	& & & & & 1 \\
	& (0) & & & \iddots & \\
	& & & 1 & & \\
	& & -1 & & & \\
	& \iddots & & & (0) & \\
	-1 & & & & &
\end{smallmatrix}\right). $$
As in \cite{elashvili2005grading, brundan2007good}, we take the symmetric set
$$ I_n \defeq \{-r, \dots, -2, -1, 1, 2, \dots, r\} $$
as indexation for the canonical basis of $\C^n$, with the following order:
$$ v_{-r} = (1,0, \dots, 0), \quad v_{-r + 1} = (0, 1, \dots, 0), \quad \dots \quad v_{r} = (0, 0, \dots, 1). $$
We change the numbering of the elementary matrices $e_{i,j}$ to have $i,j \in I_n$ and to respect the order we have chosen for the basis.

Consider the regular nilpotent element
$$ f_2 \defeq e_{1,-1} + \sum_{i=1}^{r-1} e_{i+1, i} - \sum_{i=1}^{r-1} e_{-i, -i-1} $$
corresponding to the partition $\lambda_{2} \defeq (n)$. We consider the Dynkin grading induced by the adjoint action of $ q_2 \defeq \operatorname{diag}(2r - 1,\dots, 3, 1,-1, -3, \dots ,1 -2r). $ The corresponding good algebra is
the nilradical of $\sp_n$ consisting in strictly upper-triangular matrices: $$\m_2 = \Span\left\{e_{i,j}~  | ~   i < j \in I_n \right\} \cap \sp_n. $$

We consider the symplectic partition $\lambda_1 \defeq (2^2,1^{n-4})$ of $n$. It corresponds to the second smallest nonzero nilpotent orbit, after the minimal one. We associate to it the symplectic pyramid of Figure \ref{figure:overminimal}. From this pyramid we can construct the nilpotent element $$f_1 \defeq e_{r,r-1} - e_{-r +1,-r}$$ of type $\lambda_1$ and an even good grading for $f_1$ induced by the adjoint action of $ q_1 \defeq \operatorname{diag}(2,0,\dots, 0, 0, \dots ,0 ,-2)$. The corresponding good algebra is:
$$ \m_1 = \Span\left\{e_{-r, i}, ~  e_{j,r} ~  | ~  i > -r, ~  j < r \right\} \cap \sp_n. $$

By applying Theorems \ref{theorem:geometrical-stage-reduction} and \ref{theorem:quantum-stage-reduction} to $f_1$, $f_2$, $\m_1$ and $\m_2$, we get the following proposition.

\begin{proposition} 
	In type $\mathrm{C}_r$, $r \geqslant 3$, the regular Slodowy slice is a Hamiltonian reduction of the slice corresponding to $(2^2,1^{n-4})$. In the same way, the regular finite W-algebra is a Hamiltonian reduction of the the slice corresponding to $(2^2,1^{n-4})$. 
\end{proposition}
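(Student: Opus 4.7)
The plan is to apply Theorems \ref{theorem:freeness}, \ref{theorem:geometrical-stage-reduction}, and \ref{theorem:quantum-stage-reduction} to the data $(\g = \sp_n, f_1, f_2, \m_1, \m_2)$ of the statement, defining the complementary subalgebra
$$ \m_0 \defeq \m_2 \cap \g^{(1)}_0, $$
where $\g^{(1)}_0$ is the zero weight space of $\ad(q_1)$. With this choice, the inclusion $\m_0 \subseteq \g^{(1)}_0$ required in hypothesis (2) of Theorem \ref{theorem:freeness} is immediate, and all three hypotheses can be verified by direct computation with matrix units.

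For hypothesis (3), I would tabulate the diagonal entries of $q_0 = q_2 - q_1$ and observe that the entries at positions $(r, r)$ and $(r-1, r-1)$ both equal $3 - 2r$, while those at $(-r+1, -r+1)$ and $(-r, -r)$ both equal $2r - 3$. Since $f_1 = e_{r, r-1} - e_{-r+1, -r}$, this forces $[q_0, f_1] = 0$. The remaining part of hypothesis (2) follows from writing
$$ f_0 = f_2 - f_1 = e_{1, -1} + \sum_{i=1}^{r-2}\bigl(e_{i+1, i} - e_{-i, -i-1}\bigr); $$
every index appearing in $f_0$ lies in $\{-r+1, \dots, -1, 1, \dots, r-1\}$, where $q_1$ vanishes, so $[q_1, f_0] = 0$.

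For hypothesis (1), I would compute the $\ad(q_1)$-weights of the elementary matrices generating $\m_2$: the weight is $+4$ for $e_{-r, r}$, $+2$ for each matrix unit whose row index is $-r$ or column index is $r$ (other than the corner), and $0$ otherwise. The subalgebra $\m_1$ is exactly the sum of the positive weight subspaces of $\m_2$, so $\m_2 = \m_1 \oplus \m_0$ as a weight-space decomposition. Since $\h$ centralizes $q_1$ and $q_2$, the zero joint weight space $\m_0$ is $\h$-stable. Finally, $\m_1$ is an ideal of $\m_2$ because $[\m_2, \m_1] \subseteq \m_2 \cap \g^{(1)}_{\geqslant 2} = \m_1$, using that $\m_2$ has no negative $q_1$-weights. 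Once the three hypotheses are in hand, Theorems \ref{theorem:geometrical-stage-reduction} and \ref{theorem:quantum-stage-reduction} deliver the desired isomorphisms for Slodowy slices and finite W-algebras respectively.

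The main obstacle, such as it is, is bookkeeping inside the chosen symplectic realization with its symmetric indexing $I_n = \{-r, \dots, -1, 1, \dots, r\}$: one must carefully check that the spanning sets defining $\m_1$ and $\m_0$ are indeed closed under the symplectic pairing and that the weight conditions translate correctly. Once this is done, all remaining verifications reduce to comparing diagonal entries of $q_1$ and $q_2$ at a finite list of positions.
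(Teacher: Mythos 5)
Your proposal is correct and takes essentially the same route as the paper: the paper's proof is simply to apply Theorems \ref{theorem:geometrical-stage-reduction} and \ref{theorem:quantum-stage-reduction} to the data $f_1, f_2, \m_1, \m_2$ given in that subsection, with $\m_0 = \m_2 \cap \g^{(1)}_0$ as the complement (and the ideal property being automatic here since the grading $\Gamma^1$ is even). You merely spell out the hypothesis checks (weights of $q_1$ on $\m_2$, the vanishing $[q_1,f_0]=0$ and $[q_0,f_1]=0$) that the paper leaves implicit, and these computations are accurate.
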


\begin{figure}[t]
	\begin{minipage}[b]{0.45\textwidth}
		\centering
		$$\begin{picture}(100,80)
			\put(0,0){\line(1,0){45}}
			\put(55,0){\line(1,0){45}}
			\put(0,20){\line(1,0){45}}
			\put(55,20){\line(1,0){45}}
			\put(0,40){\line(1,0){20}}
			\put(0,60){\line(1,0){20}}
			\put(0,80){\line(1,0){20}}
			\put(0,0){\line(0,1){45}}
			\put(20,0){\line(0,1){45}}
			\put(0,55){\line(0,1){25}}
			\put(20,55){\line(0,1){25}}
			\put(40,0){\line(0,1){20}}
			\put(60,0){\line(0,1){20}}
			\put(80,0){\line(0,1){20}}
			\put(100,0){\line(0,1){20}}
			\put(90,10){\makebox(0,0){{$1$}}}
			\put(70,10){\makebox(0,0){{$2$}}}
			\put(51,10){\makebox(0,0){{$\cdots$}}}
			\put(30,10){\makebox(0,0){{\tiny $\ell - 1$}}}
			\put(10,10){\makebox(0,0){{$\ell$}}}
			\put(10,30){\makebox(0,0){{\tiny $\ell + 1$}}}
			\put(10,53){\makebox(0,0){{$\vdots$}}}
			\put(10,70){\makebox(0,0){{$n$}}}
		\end{picture}$$
		\caption{\small Pyramid for hook partition $(\ell,1 , \dots, 1)$}
		\label{figure:hook}
		
		$$\begin{picture}(100,60)
			\put(0,0){\line(1,0){20}}
			\put(0,20){\line(1,0){45}}
			\put(55,20){\line(1,0){45}}
			\put(0,40){\line(1,0){45}}
			\put(55,40){\line(1,0){45}}
			\put(80,60){\line(1,0){20}}
			\put(0,0){\line(0,1){40}}
			\put(20,0){\line(0,1){40}}
			\put(40,20){\line(0,1){20}}
			\put(60,20){\line(0,1){20}}
			\put(80,20){\line(0,1){40}}
			\put(100,20){\line(0,1){40}}
			\put(10,10){\makebox(0,0){$r$}}
			\put(10,30){\makebox(0,0){{\tiny $r-1$}}}
			\put(30,30){\makebox(0,0){{\tiny $r-2$}}}
			\put(51,30){\makebox(0,0){$\cdots$}}
			\put(70,30){\makebox(0,0){{\tiny $2-r$}}}
			\put(90,30){\makebox(0,0){{\tiny $1-r$}}}
			\put(90,50){\makebox(0,0){$-r$}}
		\end{picture}$$
		\caption{\small Orthogonal pyramid for subregular parition $(n - 2,1, 1)$}
		\label{figure:orth-subregular}
	\end{minipage}%
\hspace{0.1\textwidth}
	\begin{minipage}[b]{0.45\textwidth}
		\centering
		\begin{picture}(60,120)
			\put(20,0){\line(1,0){20}}
			\put(20,20){\line(1,0){20}}
			\put(0,40){\line(1,0){40}}
			\put(0,60){\line(1,0){60}}
			\put(20,80){\line(1,0){40}}
			\put(20,100){\line(1,0){20}}
			\put(20,120){\line(1,0){20}}
			
			\put(20,0){\line(0,1){25}}
			\put(40,0){\line(0,1){25}}
			\put(0,40){\line(0,1){20}}
			\put(20,35){\line(0,1){50}}
			\put(40,35){\line(0,1){50}}
			\put(60,60){\line(0,1){20}}
			\put(20,95){\line(0,1){25}}
			\put(40,95){\line(0,1){25}}
			
			\put(30,10){\makebox(0,0){$1$}}
			\put(30,50){\makebox(0,0){\tiny $r-1$}}
			\put(10,50){\makebox(0,0){$r$}}
			\put(30,33){\makebox(0,0){$\vdots$}}
			
			\put(30,93){\makebox(0,0){$\vdots$}}
			\put(50,70){\makebox(0,0){$-r$}}
			\put(30,70){\makebox(0,0){\tiny $1-r$}}
			\put(30,110){\makebox(0,0){$-1$}}
		\end{picture}
		\caption{\small Symplectic pyramid for partition $(2,2,1, \dots, 1)$}
		\label{figure:overminimal}
	\end{minipage}   
\end{figure}

\subsection{Examples in type G}
Let $\g$ be the simple Lie algebra of type $\mathrm{G}_2$. For a concrete construction of this exceptionnal Lie algebra, one can see \cite[Section 19.3]{humphreys1972book}, where $\g$ is build as a subalgebra of the classical simple algebra of type $\mathrm{B}_3$. Let $\h$ be a Cartan subalgebra of $\g$ and $\Delta_+$ be a set of positive roots of $\mathfrak{g}$ with simple roots $\Pi = \{\alpha_1, \alpha_2\}$, where $\alpha_1$ is a short root and $\alpha_2$ is a long root. Then $$\Delta_+ = \{\alpha_1, \alpha_2, \alpha_1+\alpha_2, 2\alpha_1+\alpha_2, 3\alpha_1+\alpha_2, 3\alpha_1+2\alpha_2\}.$$ Choose positive root vectors $e_\alpha$, for $\alpha \in \Delta_+$, and negative root vectors $f_\alpha$, for $\alpha \in \Delta_- = - \Delta_+$. Using the normalized symmetric invariant bilinear form on $\g$,  we have an isomorphism $\h \cong \h^*$. Denote by $h_i$ the vector in  $\h$ corresponding to $\alpha_i$ so that $\alpha_i(h_j) = (\alpha_i \mid \alpha_j)$. Let
\begin{align*}
	f_1 \defeq f_{2\alpha_1+\alpha_2},\quad
	f_2 \defeq f_{\alpha_2} + f_{2\alpha_1+\alpha_2}.
\end{align*}
Then $f_1$ is a nilpotent element labeled by $\widetilde{\mathrm A}_1$ in the Bala-Carter theory and $f_2$ is a subregular nilpotent element. We have $\mathfrak{sl}_2$-triples $(e_k, x_k, f_k)$ in $\mathfrak{g}$, for $k=1,2$, where
\begin{align*}
	x_1 \defeq 6h_1+3h_2,\quad
	x_2 \defeq 6h_1+4h_2.
\end{align*}
Equivalently, $x_1 = \varpi_1^\vee$ and $x_2 = 2\varpi_2^\vee$, where $\varpi_i^\vee$ is the $i$-the fundamental coweight in $\mathfrak{h}$ defined by $\alpha_i(\varpi^\vee_j) = \delta_{i, j}$. Let $\mathfrak{g} = \bigoplus_{j \in \Z}\mathfrak{g}_j^{(k)}$ be the Dynkin gradings on $\mathfrak{g}$ defined by $\operatorname{ad}(x_k)$ for $k=1,2$. Then
\begin{align*}
	\mathfrak{g}
	&= \mathfrak{g}^{(1)}_{-3} \oplus \mathfrak{g}^{(1)}_{-2} \oplus \mathfrak{g}^{(1)}_{-1} \oplus \mathfrak{g}^{(1)}_0 \oplus  \mathfrak{g}^{(1)}_{1} \oplus \mathfrak{g}^{(1)}_{2} \oplus \mathfrak{g}^{(1)}_{3}\\
	&= \mathfrak{g}^{(2)}_{-4} \oplus \mathfrak{g}^{(2)}_{-2} \oplus \mathfrak{g}^{(2)}_0 \oplus  \mathfrak{g}^{(2)}_{2} \oplus \mathfrak{g}^{(2)}_{4}
\end{align*}
and
\begin{align*}
	&\g^{(1)}_1 = \C e_{\alpha_1} \oplus \C e_{\alpha_1+\alpha_2},\quad
	\g^{(1)}_2 = \C e_{2\alpha_1+\alpha_2},\quad
	\g^{(1)}_3 = \C e_{3\alpha_1+\alpha_2} \oplus \C e_{3\alpha_1+2\alpha_2},\\
	&\g^{(2)}_2 = \C e_{\alpha_2} \oplus \C e_{\alpha_1+\alpha_2} \oplus \C e_{2\alpha_1+\alpha_2} \oplus \C e_{3\alpha_1+\alpha_2},\quad
	\g^{(2)}_4 = \C e_{3\alpha_1+2\alpha_2}.
\end{align*}
We may take $\mathfrak{l}_1 = \C e_{\alpha_1+\alpha_2}$ as a Lagrangian subspace in $\g^{(1)}_1$ with respect to the symplectic form $(x, y) \mapsto (f_1\mid[x, y])$. Set
\begin{align*}
	\m_1 = \mathfrak{l}_1\oplus \mathfrak{g}^{(1)}_2 \oplus \mathfrak{g}^{(1)}_3,\quad
	\m_2 = \mathfrak{g}^{(2)}_{2} \oplus \mathfrak{g}^{(2)}_{4}.
\end{align*}
Then $f_1, f_2, \m_1, \m_2$ satisfy the assumptions in Theorems \ref{theorem:geometrical-stage-reduction} and Theorem \ref{theorem:quantum-stage-reduction}.

\begin{proposition}
	Let $\g$ be the simple Lie algebra of type $\mathrm{G}_2$. Then the Slodowy slice to a subregular nilpotent element in $\g$ is a Hamiltonian reduction of the slice to a nilpotent element in $\g$ whose Bala-Carter label is $\widetilde{\mathrm A}_1$. Moreover, the subregular finite W-algebra of $\g$ is a Hamiltonian reduction of the finite W-algebra associated to a nilpotent element in $\g$ whose Bala-Carter label is $\widetilde{\mathrm A}_1$.
\end{proposition}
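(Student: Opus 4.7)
The plan is to verify directly that the explicit data $(f_1, f_2, \m_1, \m_2)$ furnished in the statement satisfy the three hypotheses of Main Theorem \ref{main:freeness}, and then to invoke Theorems \ref{theorem:geometrical-stage-reduction} and \ref{theorem:quantum-stage-reduction} as a black box. Since the whole construction has already been spelled out in root-vector coordinates, no extra classification work is required: the argument reduces to a handful of Lie-algebra bracket computations in $\mathrm{G}_2$.

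First I would identify the complement $\m_0$. Comparing the lists
$$ \m_2 = \g^{(2)}_2 \oplus \g^{(2)}_4 = \C e_{\alpha_2} \oplus \C e_{\alpha_1+\alpha_2} \oplus \C e_{2\alpha_1+\alpha_2} \oplus \C e_{3\alpha_1+\alpha_2} \oplus \C e_{3\alpha_1+2\alpha_2} $$
and
$$ \m_1 = \mathfrak{l}_1 \oplus \g^{(1)}_2 \oplus \g^{(1)}_3 = \C e_{\alpha_1+\alpha_2} \oplus \C e_{2\alpha_1+\alpha_2} \oplus \C e_{3\alpha_1+\alpha_2} \oplus \C e_{3\alpha_1+2\alpha_2}, $$
one reads off $\m_0 = \C e_{\alpha_2}$. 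This is automatically $\h$-stable and is a (one-dimensional, hence abelian) subalgebra.

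Next I would verify that $\m_1$ is an ideal of $\m_2$, i.e.\ that $[\m_0, \m_1] \subseteq \m_1$. Every bracket $[e_{\alpha_2}, e_\beta]$ for $\beta \in \{\alpha_1+\alpha_2, 2\alpha_1+\alpha_2, 3\alpha_1+\alpha_2, 3\alpha_1+2\alpha_2\}$ is either zero (because $\alpha_2+\beta$ is not a root) or proportional to $e_{3\alpha_1+2\alpha_2} \in \m_1$ (the single nonzero case $\beta = 3\alpha_1+\alpha_2$). Then condition (2) is immediate: evaluating on $x_1 = \varpi_1^\vee$ gives $\alpha_2(x_1)=0$, so both $f_0 = f_2 - f_1 = f_{\alpha_2}$ and $\m_0 = \C e_{\alpha_2}$ sit in $\g^{(1)}_0$. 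For condition (3), I compute
$$ (2\alpha_1+\alpha_2)(q_0) = (2\alpha_1+\alpha_2)(2\varpi_2^\vee - \varpi_1^\vee) = 0 + 2 - 2 - 0 = 0, $$
so $[q_0, f_1] = 0$.

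With the three hypotheses checked, Theorems \ref{theorem:geometrical-stage-reduction} and \ref{theorem:quantum-stage-reduction} apply directly and yield both claims of the proposition. There is no serious obstacle; the only thing to be careful about is picking the Lagrangian $\mathfrak{l}_1 = \C e_{\alpha_1+\alpha_2}$ so that $[\m_0, \mathfrak{l}_1] = [e_{\alpha_2}, e_{\alpha_1+\alpha_2}] = 0 \subseteq \mathfrak{l}_1$ (recall the remark after Theorem \ref{theorem:geometrical-stage-reduction}, which reduces checking that $\m_1$ is an ideal in $\m_2$ to checking $[\mathfrak{l}_1, \m_0] \subseteq \mathfrak{l}_1$). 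The choice made in the setup is exactly what makes this work; had one chosen $\mathfrak{l}_1 = \C e_{\alpha_1}$ instead, the bracket $[e_{\alpha_2}, e_{\alpha_1}]$ would produce a component along $e_{\alpha_1+\alpha_2} \notin \mathfrak{l}_1$, breaking the ideal condition.
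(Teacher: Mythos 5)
Your proposal is correct and follows essentially the same route as the paper: the paper fixes the same data $(f_1,f_2,\mathfrak{l}_1,\m_1,\m_2)$ and simply asserts that the hypotheses of Theorems \ref{theorem:geometrical-stage-reduction} and \ref{theorem:quantum-stage-reduction} hold, whereas you carry out the (easy) root-space bracket and coweight evaluations explicitly, identifying $\m_0=\C e_{\alpha_2}$ and checking conditions (1)--(3). Your remark on the choice of Lagrangian matches the paper's remark after Theorem \ref{theorem:geometrical-stage-reduction}, so nothing is missing.
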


\section{The Skryabin equivalence}
\label{section:skryabin}

As Gan and Ginzburg did in \cite[Section 6]{gan2002quantization}, we want to apply our results to the study of the Skryabin equivalence of categories \cite{skryabin2002equivalence}. In particular, we state and prove Main Theorem \ref{main:skryabin-stage}.

\subsection{Original statement} 

Let $\chi$ be the linear form of $\g$ associated to the nilpotent element $f \in \g$. When we restrict it to the good algebra $\m$, $\chi|_{\m}$ is a character. A left $\Univ(\g)$-module is called a \textit{Whittaker module} with respect to $f$ and $\m$ if for all $y \in \m$, the element $y - \chi(y)$ acts locally nilpotently on $V$: that is to say, for all $v \in V$, there is some positive integer $n$ large enough such that: $ (y - \chi(y))^n \cdot v = 0. $ We denote by $\whcat(\g, f, \m)$ the category of Whittaker modules associated to $f$ which are finitely generated.

For any module $V \in \whcat(\g, f, \m)$, set:
$$ \Whi(V) \defeq \{v \in V ~ | ~ \forall y \in \m, ~ y \cdot v = \chi(y) v \}. $$
The set $\Whi(V)$ has a natural structure of $\Univ(\g, f)$-module. We also denote by $\module{\Univ(\g, f)}$  the category of finitely generated left $\Univ(\g, f)$-modules. 

Conversely, if we take $W \in \module{\Univ(\g, f)}$, we can define the tensor product 
$$\Ind(W) \defeq Q_f \otimes_{\Univ(\g, f)} W. $$ 
It is a $\Univ(\g)$-module and $\Ind(W) \in \whcat(\g, f, \m)$.

The following statement is the Skryabin equivalence.

\begin{theorem}[{\cite[Theorem 6.1]{gan2002quantization}}] \label{theorem:skryabin}
	We have an equivalence of categories:
	$$ \begin{tikzcd}[column sep = tiny]
		\whcat(\g, f, \m) \arrow[rr, bend left, "\Whi"] & \simeq & \module{\Univ(\g, f)} \arrow[ll, bend left, "\Ind"]
	\end{tikzcd} $$
\end{theorem}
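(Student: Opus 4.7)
The plan is to show that the natural unit $\eta_W \colon W \to \Whi(\Ind(W))$, $w \mapsto (1 \bmod \widehat{I}) \otimes w$, and counit $\epsilon_V \colon \Ind(\Whi(V)) \to V$, $(X \bmod \widehat{I}) \otimes v \mapsto X \cdot v$, are isomorphisms of functors. First I would check well-definedness: for $V \in \whcat(\g,f,\m)$, the space $\Whi(V)$ is naturally identified with $\operatorname{Hom}_{\Univ(\g)}(Q_f, V)$ and hence inherits a left module structure over $\Univ(\g,f) = \operatorname{End}_{\Univ(\g)}(Q_f)^{\mathrm{op}}$. The Whittaker and local-nilpotence conditions on $\Ind(W)$ are inherited from $Q_f$, since every element of $Q_f$ lies in some Kazhdan-filtration piece on which $y - \chi(y)$ acts by lowering the filtration degree.

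The central technical input is the \emph{freeness of $Q_f$ as a right $\Univ(\g,f)$-module}. Following Skryabin, I would establish it by equipping $Q_f$ and $\Univ(\g,f)$ with their Kazhdan filtrations, using Theorem~\ref{theorem:quantization-slodowy} to identify $\gr_{\bullet} Q_f \cong \C[\mu^{-1}(0)]$ and $\gr_{\bullet}\Univ(\g,f) \cong \C[\Slice_f]$, and invoking Theorem~\ref{theorem:slice-iso} to get $\C[\mu^{-1}(0)] \cong \C[M] \otimes_{\C} \C[\Slice_f]$. A standard filtered-to-graded lifting argument, which makes essential use of the non-negativity and exhaustivity of the Kazhdan filtration, then produces a free basis of $Q_f$ over $\Univ(\g,f)$; following Premet, one can arrange that the basis consists of ordered PBW monomials in a $q_{\Gamma}$-homogeneous basis of $\m$ acting on $1 \bmod \widehat{I}$.

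Given this freeness, write $Q_f \cong \Univ(\m) \otimes_{\C} \Univ(\g,f)$ as a right $\Univ(\g,f)$-module (with an auxiliary twisted left $\m$-action). The unit $\eta_W$ is then an isomorphism tautologically: the $\m$-Whittaker vectors in the left tensor factor $\Univ(\m)$ reduce to scalars, so $\Whi(Q_f \otimes_{\Univ(\g,f)} W) = (1 \bmod \widehat{I}) \otimes W$. For the counit $\epsilon_V$, surjectivity reduces to the generation statement $V = \Univ(\g) \cdot \Whi(V)$, which I would prove by a Chevalley--Eilenberg cohomology vanishing argument for $\m$ acting on $V$ via $y \mapsto y - \chi(y)$: local nilpotence combined with the exactness of the Koszul-type complex associated with the unipotent group $M$ (an algebraic analogue of the vanishing already used in Claim~\ref{claim:derham}) yields $\coH^{>0}(\m, V) = 0$. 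Injectivity of $\epsilon_V$ then follows from the freeness of $Q_f$ by a rank comparison in the PBW basis.

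The main obstacle is the freeness of $Q_f$ as a right $\Univ(\g,f)$-module. Its classical counterpart is supplied by Theorem~\ref{theorem:slice-iso}, but lifting this free decomposition across the Kazhdan filtration requires a careful choice of homogeneous PBW lifts in the spirit of Premet, together with a filtration-degree bookkeeping argument; this is exactly where the non-negativity of the Kazhdan filtration on $Q_f$ is crucial. Once freeness is secured, both $\eta$ and $\epsilon$ being isomorphisms follows by essentially formal arguments combined with the generation statement $V = \Univ(\g)\cdot \Whi(V)$.
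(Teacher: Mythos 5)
Your overall strategy (unit/counit, freeness of $Q_f$ over $\Univ(\g,f)$, transfer across the Kazhdan filtration via Theorems \ref{theorem:slice-iso} and \ref{theorem:quantization-slodowy}, cohomology vanishing coming from the unipotence of $M$) is the same route as Skryabin and Gan--Ginzburg, which is exactly what the paper cites here and reproduces for the staged analogue in Propositions \ref{proposition:equivalence-1} and \ref{proposition:equivalence-2}. But several steps as written do not work. First, your description of the free basis is wrong: left multiplication by elements of $\m$ on $1 \bmod \widehat{I}$ produces scalars, since $y \bmod \widehat{I} = \chi(y)$; in particular $Q_f \not\cong \Univ(\m)\otimes_{\C}\Univ(\g,f)$ in the way you use it. The free generators are PBW monomials in a complement $\mathfrak{a}$ with $\g = \m \oplus \mathfrak{a} \oplus \Ker\ad(f)$ (classically, the $\C[M]$-factor of $\C[\mu^{-1}(0)] \cong \C[M]\otimes_{\C}\C[\Slice_f]$), and, more importantly, the twisted $\m$-action on $Q_f\otimes_{\Univ(\g,f)}W$ does not respect any such tensor decomposition at the filtered level, so the unit being an isomorphism is not ``tautological''. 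The honest argument is the one run in Proposition \ref{proposition:equivalence-1}: the splitting only holds after passing to $\gr$, where Theorem \ref{theorem:slice-iso} gives the $M$-equivariant isomorphism, and one then needs the spectral sequence of the filtered Chevalley--Eilenberg complex (convergence from non-negativity and exhaustivity of the Kazhdan filtration, collapse from the triviality of the de Rham cohomology of the unipotent group) to conclude that $\gr$ of the unit, hence the unit, is an isomorphism.

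Second, your treatment of the counit has two gaps. The vanishing $\coH^{>0}\bigl(V\otimes_{\C}\Alt^{\bullet}(\m)^*\bigr)=0$ you assert for an arbitrary $V \in \whcat(\g,f,\m)$ does not follow from local nilpotence together with the de Rham-type vanishing of Claim \ref{claim:derham}: that claim concerns the specific module $\C[M]\otimes_{\C}\Alt^{\bullet}(\m)^*$ (equivalently, $\gr$ of induced modules), not arbitrary locally nilpotent twisted modules, and the vanishing for a general Whittaker module is only known a posteriori, once $V \cong \Ind(\Whi(V))$ is established — so invoking it to prove surjectivity of the counit is circular. Moreover ``injectivity of $\epsilon_V$ by a rank comparison in the PBW basis'' is not an argument. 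What is actually needed, and what you never isolate, is the elementary key lemma (the analogue of Lemma \ref{lemma:skryabin}): a nonzero module on which every $y - \chi(y)$, $y \in \m$, acts locally nilpotently has a nonzero Whittaker vector. With it one argues as in Proposition \ref{proposition:equivalence-2}: the kernel of $\epsilon_V$ has no Whittaker vectors by the unit computation, hence is zero; the cokernel has no Whittaker vectors because $\coH^{1}$ of the induced module $\Ind(\Whi(V))$ vanishes (again by the spectral sequence applied to the induced module, not to $V$), hence is zero. So the skeleton of your proposal is right, but the three load-bearing points — the correct free decomposition and its failure to be $\m$-equivariant before taking $\gr$, the restriction of the cohomology vanishing to induced modules, and the Whittaker-vector lemma — are missing or incorrect as stated.
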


\subsection{Category equivalences for the reduction by stages} 

From now, we fix $f_1, f_2$ two nilpotent elements of $\g$ and we adopt the same notations as in Subsection~ \ref{subsec:quantum-stage}. We assume the hypotheses of Theorem \ref{theorem:freeness}, in particular we have the following relation for the Lie algebras algebras:
$$ \m_2 = \m_1 \oplus \m_0, \quad [\m_1, \m_0] \subseteq \m_1. $$
The linear forms associated to $f_i$, $i=1,2$, denoted by $\chi_i$, coincide on $\m_1$. Recall the notations
$$ Q_0 = \Univ(\g, f_1) / \widehat{I_0} \quad \text{and} \quad \Univ_0 = Q_0{}^{\ad(\m_0)}. $$
We also established an algebra isomorphism: $$ \Univ_0 \cong \Univ(\g, f_2). $$

For $i=1,2$, we use the following notations for the Skryabin equivalence corresponding to $f_i$:
$$ \begin{tikzcd}[column sep = tiny]
	\whcat(\g, f_i, \m) \arrow[rr, bend left, "\Whi_i"] & \simeq & \module{\Univ(\g, f_i)} \arrow[ll, bend left, "\Ind_i"]
\end{tikzcd} $$

Recall that, according to Lemma \ref{lemma:m0-embedding}, $\m_0$ is embedded in $\Univ(\g, f_1)$; so the following definition makes sense. Let us denote by $\whcat_0$ the category of finitely generated left $\Univ(\g, f_1)$-modules $V$ such that for all $y \in \m_0$, the element $y - \chi_2(y)$ acts locally nilpotently on $V$: that is to say, for all $v \in V$ there is some positive integer $n$ large enough such that: $ (y - \chi_2(y))^n \cdot v = 0. $ For $V \in \whcat_0$, we set:
$$ \Whi_0(V) \defeq \{v \in V ~ | ~ \forall y \in \m_0, ~ y \cdot v = \chi_2(y) v \}. $$

\begin{lemma}
	For any $V \in \whcat_0$, $\Whi_0(V)$ is naturally a left $\Univ_0$-module, that is to say a $\Univ(\g, f_2)$-module.
\end{lemma}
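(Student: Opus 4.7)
The plan is to mimic the classical Skryabin setup (Theorem~\ref{theorem:skryabin}) one step upstairs: I would show that the natural $\Univ(\g, f_1)$-action on $V$ restricts to a well-defined action of $\Univ_0 = (\Univ(\g, f_1)/\widehat{I_0})^{\ad(\m_0)}$ on the subspace $\Whi_0(V)$. Concretely, given a class $[x] \in \Univ_0$ with representative $x \in \Univ(\g, f_1)$ and $v \in \Whi_0(V)$, I propose to define $[x] \cdot v \defeq x \cdot v$, and then to verify (a) this is independent of the representative $x$, (b) the result still lies in $\Whi_0(V)$, and (c) it is compatible with the product on $\Univ_0$.

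For (a), I would first observe that $\widehat{I_0}$ annihilates $\Whi_0(V)$: if $v \in \Whi_0(V)$, then by definition $(y - \chi_2(y)) \cdot v = 0$ for every $y \in \m_0$ (where $\m_0 \hookrightarrow \Univ(\g, f_1)$ via Lemma~\ref{lemma:m0-embedding}), hence any left multiple $a(y - \chi_2(y)) \in \widehat{I_0}$ annihilates $v$ as well. So $x \cdot v$ only depends on $x \bmod \widehat{I_0}$.

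For (b), I would compute, for $y \in \m_0$ and a representative $x$ of an $\ad(\m_0)$-invariant class,
$$(y - \chi_2(y)) \cdot (x \cdot v) = [y, x] \cdot v + x(y - \chi_2(y)) \cdot v = [y, x] \cdot v,$$
since $(y - \chi_2(y)) \cdot v = 0$. The $\ad(\m_0)$-invariance of $[x]$ in $Q_0$ means precisely that $[y, x] \in \widehat{I_0}$, so by step (a) the right-hand side vanishes. Hence $x \cdot v \in \Whi_0(V)$. Point (c) is automatic: the product on $\Univ_0$ is induced from composition in $\Univ(\g, f_1)$, and composition of operators on $V$ is associative, so $([x][x']) \cdot v = (xx') \cdot v = x \cdot (x' \cdot v) = [x] \cdot ([x'] \cdot v)$, and the unit clearly acts as identity.

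There is no genuine obstacle here; the only thing to keep straight is that when writing $[y, x]$ the element $y$ is regarded both as an element of $\m_0 \subseteq \g$ acting on $V$ and as its image $y \bmod \widehat{I_1}$ in $\Univ(\g, f_1)$, and that these two pictures are compatible because the $\Univ(\g, f_1)$-action on $V$ factors (when restricted to $\m_0$) through the embedding of Lemma~\ref{lemma:m0-embedding}. Naturality in $V$ is immediate since every morphism in $\whcat_0$ is $\Univ(\g, f_1)$-linear and hence restricts to a $\Univ_0$-linear map on Whittaker vectors.
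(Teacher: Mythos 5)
Your proof is correct and follows essentially the same route as the paper: the key point in both is that $\widehat{I_0}$ annihilates every $v \in \Whi_0(V)$, so $\left(X \bmod \widehat{I_0}\right)\cdot v \defeq X\cdot v$ is well defined. You are in fact slightly more thorough than the paper, which leaves implicit the check (your step (b), via $\ad(\m_0)$-invariance of $X$) that $X\cdot v$ remains in $\Whi_0(V)$.
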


\begin{proof}
    Set $v \in \Whi_0(V)$. By definition, for all $y \in \m_0$, $v$ is killed by the left generators of $\widehat{I_0}$:
    $(y - \chi_2(y)) \cdot v = 0$. Hence, we get that $ \widehat{I_0} \cdot v = 0. $ Therefore, $X \bmod \widehat{I_0}\in \Univ_0 \cong \Univ(\g, f_2)$, the formula
    $$ \left(X \bmod \widehat{I_0}\right) \cdot v \defeq X \cdot v $$ gives a well-defined left action of $\Univ_0$ on $\Whi_0(V)$.
\end{proof}

This defines a functor $ \Whi_0 : \whcat_0 \rightarrow \module{\Univ(\g, f_2)}. $ We want to define an equivalence inverse to this functor. One can check that $Q_0 = \Univ(\g, f_1) / \widehat{I_0}$ is in fact a $(\Univ(\g, f_1), \Univ_0)$-bimodule for the action of $\Univ_0$ by right-multiplication. 

\begin{remark}
    There is an algebra isomorphism
    $$ \operatorname{End}_{\Univ(\g, f_1)}(Q_0)^{\mathsf{op}} \overset{\sim}{\longrightarrow} \Univ_0, \quad \theta \longmapsto \theta\big(1 \bmod \widehat{I_0}\big), $$
    where the supscript ``$\mathsf{op}$'' means that we take the opposite multiplication of the endomorphism algebra. This is an analogue, in reduction by stages, of the isomorphism $\operatorname{End}_{\g}(Q_f)^{\mathsf{op}} \cong \Univ(\g, f)$ \cite[Paragraph 1.3]{gan2002quantization}. In particular, the bimodule structure on $Q_0$ becomes transparent with this identification.
\end{remark}

Whence, for $W \in \module{\Univ(\g, f_2)}$, set:
$$ \Ind_0(W) \defeq Q_0 \otimes_{\Univ_0} W, $$
which is a left $\Univ(\g, f_1)$-module.

\begin{lemma}
	For all $W \in \module{\Univ_0}$, $\Ind_0(W) \in \whcat_0$.
\end{lemma}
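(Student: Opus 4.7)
The plan is to verify the two defining properties of $\whcat_0$ for $\Ind_0(W)$: finite generation as a $\Univ(\g, f_1)$-module, and local nilpotency of $y - \chi_2(y)$ for every $y \in \m_0$.

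For finite generation, I would observe that if $w_1, \dots, w_r$ generate $W$ as a $\Univ_0$-module, then the elements $1 \otimes w_i$ generate $\Ind_0(W)$ as a $\Univ(\g, f_1)$-module. This uses that $Q_0$ is cyclic, generated by $1 \bmod \widehat{I_0}$ over $\Univ(\g, f_1)$, combined with the relation $X \otimes (Y \cdot w) = (X \cdot Y) \otimes w$ imposed by tensoring over $\Univ_0$.

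The heart of the argument is local nilpotency. The generator $1 \otimes w$ is killed by $y - \chi_2(y)$ for every $y \in \m_0$, since $(y - \chi_2(y)) \bmod \widehat{I_0} = 0$ in $Q_0$. For an arbitrary element of the form $u \cdot (1 \otimes w)$ with $u \in \Univ(\g, f_1)$, I would apply the classical commutator expansion
$$ z^n u = \sum_{k=0}^n \binom{n}{k} (\ad z)^{k}(u) \cdot z^{n-k} $$
valid in any associative algebra, with $z \defeq y - \chi_2(y)$ (so that $\ad z = \ad y$). Evaluating both sides on $1 \otimes w$, every term with $k < n$ vanishes because $z$ annihilates $1 \otimes w$, leaving $(y - \chi_2(y))^n \cdot u \cdot (1 \otimes w) = (\ad y)^n(u) \cdot (1 \otimes w)$. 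Thus the problem reduces entirely to showing that $\ad(y)$ acts locally nilpotently on the algebra $\Univ(\g, f_1)$ for every $y \in \m_0$.

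I would establish this final point in three steps. Since $y \in \m_0$ is $\ad$-nilpotent in $\g$, the Leibniz rule immediately upgrades this to local nilpotency of $\ad(y)$ on $\Univ(\g)$. Next, I would check that $\ad(y)$ preserves the left ideal $\widehat{I_1}$: for $z \in \m_1$ one has $[y, z - \chi_1(z)] = [y,z] \in \m_1$ because $\m_1$ is an ideal of $\m_2$, and $\chi_1([y,z]) = \chi_2([y,z]) = 0$ by Lemma~\ref{lemma:compatibility} together with the fact that $\chi_2$ is a character of $\m_2$; so $[y, z - \chi_1(z)] = (z' - \chi_1(z')) + \chi_1(z') \in \widehat{I_1}$ for $z' = [y,z] \in \m_1$. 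Hence $\ad(y)$ descends to a locally nilpotent operator on $Q_1$. Finally, the Jacobi identity forces the subalgebra $\Univ(\g, f_1) = (Q_1)^{\ad(\m_1)}$ to be $\ad(y)$-stable, and local nilpotency is inherited from $Q_1$.

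The only step which genuinely invokes the reduction-by-stages hypotheses is the preservation of $\widehat{I_1}$ by $\ad(\m_0)$; everything else is formal and mirrors the verification underlying the classical Skryabin equivalence, so I expect no serious obstacle.
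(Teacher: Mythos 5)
Your proposal is correct and follows essentially the same route as the paper: both proofs rest on the identity $(y-\chi_2(y))^n\cdot\bigl((X\bmod\widehat{I_0})\otimes w\bigr)=\bigl(\ad(y)^nX\bmod\widehat{I_0}\bigr)\otimes w$ (the paper obtains it by reducing modulo the left ideal $\widehat{I_0}$, you via the binomial commutator expansion applied to $X\cdot(1\otimes w)$, which is the same computation) and then conclude by nilpotency of $\ad(y)$. The extra details you supply — finite generation and the careful verification that $\ad(y)$ is locally nilpotent on $\Univ(\g,f_1)$ via stability of $\widehat{I_1}$ — are points the paper leaves implicit (the latter being essentially Lemma~\ref{lemma:m0-embedding}), so no discrepancy arises.
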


\begin{proof}
	Set $\left(X \bmod \widehat{I_0}\right) \otimes w \in \Ind_0(W)$. For all $y \in \m_0$, one has:
	\begin{align*}
		(y - \chi_2(y)) \cdot \left(\left(X \bmod \widehat{I_0}\right) \otimes w\right) 
		& =  \left((y - \chi_2(y))X \bmod \widehat{I_0}\right) \otimes w \\
		& = \left(\ad(y) X\bmod \widehat{I_0}\right) \otimes w, 
	\end{align*}
	where the last equality holds because $ X (y - \chi_2(y)) \in \widehat{I_0}. $ Because $y$ is nilpotent, for $n > 0$ large enough, we have
	$$ (y - \chi_2(y))^n \cdot \left(\left(X \bmod \widehat{I_0}\right) \otimes w\right) = \left(\ad(y)^n X\bmod \widehat{I_0}\right) \otimes w = 0. $$
\end{proof}

We state Main Theorem \ref{main:skryabin-stage}, which is an analogue of the Skryabin equivalence (Theorem \ref{theorem:skryabin}).

\begin{theorem} \label{theorem:skryabin-stage}
	We have an equivalence of categories:
	$$ \begin{tikzcd}[column sep = tiny]
		\whcat_0 \arrow[rr, bend left, "\Whi_0"] & \simeq & \module{\Univ(\g, f_2)}. \arrow[ll, bend left, "\Ind_0"]
	\end{tikzcd} $$
	Moreover, $\Whi_1$ and $\Ind_1$ induce an equivalence of categories $\whcat(\g, f_2, \m_2) \simeq \whcat_0$ by restriction and we have $\Whi_2 = \Whi_0 \circ \Whi_1$.
\end{theorem}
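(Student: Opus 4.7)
The plan is to prove the equivalence $\whcat_0 \simeq \module{\Univ(\g,f_2)}$ by adapting Gan and Ginzburg's proof of the Skryabin equivalence (Theorem \ref{theorem:skryabin}) to the second-stage reduction, and then to deduce the remaining statements by formal categorical manipulations built around the identity $\Whi_2 = \Whi_0 \circ \Whi_1$.

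The central technical input will be the freeness of $Q_0$ as a right $\Univ_0$-module. I would prove this using the Kazhdan filtration associated to $\Gamma^2$ introduced in Section~\ref{section:quantum-reduction}: Proposition~\ref{proposition:green-diagram} gives graded algebra isomorphisms $\gr^{(2)} Q_0 \cong \C[{\mu_0}^{-1}(0)]$ and $\gr^{(2)} \Univ_0 \cong \C[\Slice_2]$, and Theorem~\ref{theorem:freeness} provides the $M_0$-equivariant isomorphism ${\mu_0}^{-1}(0) \cong M_0 \times \Slice_2$, hence $\C[{\mu_0}^{-1}(0)] \cong \C[M_0] \otimes_\C \C[\Slice_2]$ as right $\C[\Slice_2]$-modules. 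Thus $\gr^{(2)} Q_0$ is free over $\gr^{(2)} \Univ_0$. Lifting a homogeneous basis of $\C[M_0]$ to $Q_0$ and applying the standard nonnegative-filtration argument in the spirit of \cite[Lemma 6.8]{kamnitzer2022hamiltonian} then yields the desired freeness.

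With freeness in hand, the equivalence $\whcat_0 \simeq \module{\Univ(\g,f_2)}$ can be obtained by mimicking \cite[Section~6]{gan2002quantization}. A direct computation shows $\Whi_0(Q_0) = Q_0^{\ad(\m_0)} = \Univ_0$, and by freeness the unit map $w \mapsto 1 \otimes w$ gives an isomorphism $W \overset{\sim}{\to} \Whi_0(Q_0 \otimes_{\Univ_0} W)$ for any $W \in \module{\Univ(\g,f_2)}$. Conversely, for $V \in \whcat_0$ the natural evaluation map $Q_0 \otimes_{\Univ_0} \Whi_0(V) \to V$, $(X \bmod \widehat{I_0}) \otimes v \mapsto X \cdot v$, is shown to be an isomorphism by filtering $V$ using the locally nilpotent action of $\{y - \chi_2(y) : y \in \m_0\}$ and inducting on successive subquotients. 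The main obstacle is this inductive step: one must check that $\m_0$ acting on $V \in \whcat_0$ through the embedding $\m_0 \hookrightarrow \Univ(\g,f_1)$ of Lemma~\ref{lemma:m0-embedding} enjoys the same nilpotence and surjectivity properties as an ordinary nilpotent Lie-algebra action. This follows from the fact that $\ad(y)$ is locally nilpotent on $\Univ(\g,f_1)$ for $y \in \m_0$ (inherited from $\Univ(\g)$, where nilpotent elements of $\g$ act nilpotently on finite-dimensional $\ad$-submodules by Engel's theorem) together with the graded description of $Q_0$ used above.

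For the remaining statements, Lemma~\ref{lemma:compatibility} gives $\whcat(\g,f_2,\m_2) \subseteq \whcat(\g,f_1,\m_1)$ since $\chi_2|_{\m_1} = \chi_1|_{\m_1}$. For $V \in \whcat(\g,f_2,\m_2)$, a vector $v \in V$ lies in $\Whi_2(V)$ if and only if $z \cdot v = \chi_1(z) v$ for all $z \in \m_1$ (using $\chi_1|_{\m_1} = \chi_2|_{\m_1}$) and $y \cdot v = \chi_2(y) v$ for all $y \in \m_0$, which is precisely the condition $v \in \Whi_0(\Whi_1(V))$; this is the identity $\Whi_2 = \Whi_0 \circ \Whi_1$, and it simultaneously shows that $\Whi_1|_{\whcat(\g,f_2,\m_2)}$ factors through $\whcat_0$. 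For the equivalence of categories, given $V \in \whcat_0$ I would set $W \defeq \Ind_2(\Whi_0(V)) \in \whcat(\g,f_2,\m_2)$; applying $\Whi_0 \circ \Whi_1 = \Whi_2$ to $W$ and using the classical Skryabin equivalence (Theorem~\ref{theorem:skryabin}) yields $\Whi_0(\Whi_1(W)) = \Whi_2(W) \cong \Whi_0(V)$, whence $\Whi_1(W) \cong V$ by the equivalence $\Whi_0$ established above, proving essential surjectivity; full faithfulness is extracted from the same commuting triangle using that $\Whi_0$ and $\Whi_2$ are already equivalences.
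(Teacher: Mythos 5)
Your plan reproduces the paper's architecture (freeness of $Q_0$ over $\Univ_0$ obtained from the graded isomorphisms, a Gan--Ginzburg-style proof of $\whcat_0 \simeq \module{\Univ(\g,f_2)}$, then formal deductions for the restricted $\Whi_1$ and $\Whi_2=\Whi_0\circ\Whi_1$), but the two steps you treat as routine are exactly where the real work lies, and as stated they do not go through. First, ``by freeness the unit map $w\mapsto 1\otimes w$ gives an isomorphism $W\overset{\sim}{\to}\Whi_0(Q_0\otimes_{\Univ_0}W)$'' is not a proof: freeness identifies $Q_0\otimes_{\Univ_0}W$ with a direct sum of copies of $W$ as a vector space, but the twisted $\m_0$-action mixes the basis elements, and taking $\m_0$-invariants does not commute with $-\otimes_{\Univ_0}W$; one must actually compute $\coH^0\bigl((Q_0\otimes_{\Univ_0}W)\otimes_{\C}\Alt^{\bullet}(\m_0)^*\bigr)$. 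Second, for the evaluation map $\Theta\colon Q_0\otimes_{\Univ_0}\Whi_0(V)\to V$, ``filtering $V$ by the locally nilpotent action and inducting on subquotients'' fails at surjectivity: a Whittaker vector of the cokernel $V''$ need not lift to a Whittaker vector of $V$, the obstruction being a class in $\coH^1(\m_0,-)$ of the image of $\Theta$, and local nilpotence (your Engel argument, which only yields Lemma \ref{lemma:skryabin}) gives no control of that $\coH^1$. The missing ingredient in both places is the Whittaker-cohomology computation for modules of the form $Q_0\otimes_{\Univ_0}W$: the paper (Propositions \ref{proposition:equivalence-1} and \ref{proposition:equivalence-2}) chooses a finite-dimensional generating subspace of $W$ to get an exhaustive non-negative $\Gamma^2$-Kazhdan filtration, uses graded freeness to get the $\m_0$-equivariant identification $\gr^{(2)}Q_0\otimes_{\gr^{(2)}\Univ_0}\gr^{(2)}W\cong\gr^{(2)}(Q_0\otimes_{\Univ_0}W)$, and runs the spectral sequence whose $E_1$-page is, via Claim \ref{claim:freeness} and the de Rham cohomology of the affine space $M_0$ (as in Claim \ref{claim:derham}), concentrated in degree $0$ and equal to $\gr^{(2)}W$; collapse of this spectral sequence gives both the unit isomorphism and the $\coH^{>0}$-vanishing needed for the long-exact-sequence argument proving surjectivity of $\Theta$. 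So the ``main obstacle'' you single out (that $\m_0\hookrightarrow\Univ(\g,f_1)$ behaves like a nilpotent Lie-algebra action) is not the crux, and your proposal is missing the cohomological step that the paper's proof is built around.

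The remaining parts of your proposal are fine. Lifting graded freeness to freeness of $Q_0$ over $\Univ_0$ is sound (the paper only needs and uses the graded statement), and your treatment of the second half is correct and a legitimate variant of the paper's: you get $\whcat(\g,f_2,\m_2)\subseteq\whcat(\g,f_1,\m_1)$ from $\chi_1|_{\m_1}=\chi_2|_{\m_1}$ (Lemma \ref{lemma:compatibility}), the pointwise identity $\Whi_2=\Whi_0\circ\Whi_1$ from $\m_2=\m_1\oplus\m_0$, and essential surjectivity of the restricted $\Whi_1$ by feeding $\Ind_2(\Whi_0(V))$ through the classical Skryabin equivalence and the conservativity of $\Whi_0$, whereas the paper instead checks directly that $\Ind_1$ carries $\whcat_0$ into $\whcat(\g,f_2,\m_2)$. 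But all of this presupposes that the equivalence $(\Whi_0,\Ind_0)$ has already been established, so the gap described above must be filled first.
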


\subsection{Proof of Theorem \ref{theorem:skryabin-stage}}

We use the techniques developed in \cite{gan2002quantization}. We split the proof in two propositions, which give the two natural isomorphisms we need.

\begin{proposition} \label{proposition:equivalence-1}
	For any $W \in \module{\Univ_0}$, the following map is a $\Univ_0$-isomorphism:
	\begin{alignat*}{2}
		\Upsilon :  & \ & W & \longrightarrow \Whi_0(Q_0 \otimes_{\Univ_0} W) \\
		& & w & \longmapsto 1 \otimes w.
	\end{alignat*}
\end{proposition}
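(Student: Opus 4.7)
The plan is to adapt the strategy of the classical Skryabin equivalence (Theorem~\ref{theorem:skryabin}, i.e., \cite[Theorem~6.1]{gan2002quantization}) to the reduction-by-stages setting, substituting the classical free $M$-action input by its stages counterpart Main Theorem~\ref{main:freeness}.

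First I would verify that $\Upsilon$ is well-defined and $\Univ_0$-linear. For $y \in \m_0$, the generator $y - \chi_2(y)$ of $\widehat{I_0}$ vanishes in $Q_0$, so $(y - \chi_2(y))\cdot(1 \otimes w) = 0$, making $1 \otimes w$ Whittaker; and for $[X] \in \Univ_0$ we have $\Upsilon([X] \cdot w) = 1 \otimes [X] w = [X] \otimes w = [X]\cdot \Upsilon(w)$, yielding $\Univ_0$-linearity.

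For bijectivity, the idea is to realize $\Whi_0(\Ind_0(W))$ as the degree-zero cohomology of the Chevalley-Eilenberg complex $\Ind_0(W) \otimes_{\C} \Alt^\bullet(\m_0)^*$ (with $\m_0$ acting through the $\chi_2$-twisted action $y \mapsto y - \chi_2(y)$), and to compute this cohomology by a filtration argument analogous to Claim~\ref{claim:derham}. Concretely, choose a good filtration on the finitely generated $\Univ_0$-module $W$, combine with the Kazhdan filtration on $Q_0$ associated to $\Gamma^2$ to filter $\Ind_0(W) = Q_0 \otimes_{\Univ_0} W$, and filter the Chevalley-Eilenberg complex by the sum of Kazhdan and exterior degrees. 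The resulting spectral sequence has
$$ E_1^{m,n} \cong \coH^{m+n}\bigl(\gr^{(2)}_m \Ind_0(W) \otimes_\C \Alt^\bullet(\m_0)^*\bigr). $$
By Main Theorem~\ref{main:freeness} together with Proposition~\ref{proposition:green-diagram}, $\gr^{(2)} Q_0 \cong \C[\mu_0^{-1}(0)] \cong \C[M_0] \otimes_\C \gr^{(2)} \Univ_0$ as $(\m_0, \gr \Univ_0)$-bimodules, whence $\gr \Ind_0(W) \cong \C[M_0] \otimes_\C \gr W$. Since $M_0$ is unipotent, its algebraic de~Rham cohomology is $\C$ in degree $0$ and vanishes otherwise; exactly as in Claim~\ref{claim:derham}, this forces the $E_1$-page to be concentrated on the line $n = 0$ and equal to $\gr W$. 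The spectral sequence then collapses, converging to $\gr \Whi_0(\Ind_0(W)) \cong \gr W$, and since the filtrations are non-negative and exhaustive, \cite[Lemma~6.8]{kamnitzer2022hamiltonian} lifts this to an isomorphism $\Whi_0(\Ind_0(W)) \cong W$ which, by tracking the identification, coincides with $\Upsilon$.

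The main obstacle will be setting up the filtrations on $\Ind_0(W)$ and on the Chevalley-Eilenberg complex compatibly—in particular ensuring the $\m_0$-action on the associated graded becomes the classical $M_0$-action on $\C[\mu_0^{-1}(0)]$—so that the de~Rham computation of Claim~\ref{claim:derham} can be reused verbatim in this second-stage setting. Once this bookkeeping is in place, the collapsing of the spectral sequence and the identification with $\Upsilon$ follow directly from the free $M_0$-action on $\mu_0^{-1}(0)$ established in Main Theorem~\ref{main:freeness}.
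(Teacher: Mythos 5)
Your proposal is correct and follows essentially the same route as the paper: identify $\Whi_0(\Ind_0(W))$ with degree-zero Chevalley--Eilenberg cohomology, filter via a Kazhdan-type filtration induced from a finite generating subspace of $W$, use freeness of $\gr^{(2)}Q_0 \cong \C[M_0]\otimes_\C \gr^{(2)}\Univ_0$ over $\gr^{(2)}\Univ_0$ to compute the $E_1$-page as the trivial de~Rham cohomology of the unipotent group $M_0$, and conclude that the collapsing spectral sequence makes $\gr\Upsilon$, hence $\Upsilon$, an isomorphism. The only cosmetic difference is that you invoke \cite[Lemma~6.8]{kamnitzer2022hamiltonian} to pass from the graded isomorphism back to $\Upsilon$, where the paper simply notes directly that $\gr\Upsilon$ being an isomorphism implies $\Upsilon$ is.
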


\begin{proof}
	We consider the twisted action of $\m_0$ on $Q_0 \otimes _{\Univ_0} W$, that we denote by $\tau$. By definition, for all $y \in \m_0$ and $X \otimes w \in Q_0 \otimes _{\Univ_0} W$:
	$$ \tau(y) (X \otimes w) \defeq ((y - \chi_2(y)) \cdot X) \otimes w. $$
	For this action, $\Whi_0(Q_0 \otimes_{\Univ_0} W)$ corresponds the the level $0$ of the Chevalley-Eilenberg cohomology of $(Q_0 \otimes _{\Univ_0} W, \tau)$:
	$$ \Whi_0(Q_0 \otimes _{\Univ_0} W) = \coH^0((Q_0 \otimes _{\Univ_0} W) \otimes_{\C} \Alt^{\bullet}(\m_0)^*). $$
	
	We want to show that the natural map
	$$ \Upsilon : W \longrightarrow \coH^0((Q_0 \otimes _{\Univ_0} W) \otimes_{\C} \Alt^{\bullet}(\m_0)^*) $$
	is an isomorphism. To do so, we endow both sides with filtrations. 
	
	Since $W$ is finitely generated, there exists a finite dimensional vector space $W_0 \subseteq W$ such that $ W = \Univ_0 \cdot W_0. $	We define the induced Kazhdan filtration by setting for all $n \in \Z$:
	$$ W^{(2)}_n \defeq \left(\Univ_0\right)^{(2)}_n \cdot W_0. $$
	According to Gan and Ginzburg, the fact that $\gr^{(2)} Q_0 \cong \C[{\mu_0}^{-1}(0)]$ is a free $\gr^{(2)} \Univ_0 \cong \C[\Slice_2]$-module (see Theorem \ref{theorem:freeness}) implies that we have an $\m_0$-equivariant graded vector space isomorphism
	$$ \gr^{(2)}_{\bullet} Q_0 \otimes_{\gr^{(2)} \Univ_0} \gr^{(2)}_{\bullet} W \cong \gr^{(2)}_{\bullet}(Q_0 \otimes_{\Univ_0} W) $$
	given by the mapping
	$$ [v]^{(2)}_i \otimes [w]^{(2)}_j \longmapsto [v \otimes w]^{(2)}_{i + j} $$
	for any $v \in \left(Q_0\right)^{(2)}_i$, $w \in W^{(2)}_j$ and $i,j \in \Z$.

	Now, let us use a spectral sequence argument as before. We consider the filtered complex:
	$$ \left(\left(Q_0 \otimes_{\Univ_0} W\right)  \otimes_{\C}\Alt^{\bullet} \m_0 \right)^{(2)}_{\bullet}. $$
	The first pages of the spectral sequence are:
	\begin{align*}
		E^{m,n}_0 & = \gr^{(2)}_m \left(\left(Q_0 \otimes_{\Univ_0} W\right)\otimes_{\C} \Alt^{m + n}(\m_0)^*\right), \\
		E^{m,n}_1 & = \coH^{m+n}\left(\gr^{(2)}_m \left(\left(Q_0 \otimes_{\Univ_0} W\right) \otimes_{\C}  \Alt^{\bullet}(\m_0)^*\right)\right) \\
		& \cong \coH^{m+n}\left(\left(\left(\C[{\mu_0}^{-1}(0)] \otimes_{\gr^{\Kaz^2} \Univ_0} \gr^{\Kaz^2} W\right) \otimes_{\C} \Alt^{\bullet}(\m_0)^*\right)^{(2)}_m\right).
	\end{align*}
	
	Claim \ref{claim:freeness} gives the $\m_0$-equivariant graded isomorphism:
	$$ \C[{\mu_0}^{-1}(0)] \cong \C[M_0] \otimes_{\C} \gr^{(2)} \Univ_0, $$
	and it implies the isomorphism 
	$$ \C[{\mu_0}^{-1}(0)] \otimes_{\gr^{(2)} \Univ_0} \gr^{(2)} W \cong \C[M_0] \otimes_{\C} \gr^{(2)} W. $$
	By similar computations as in the proof of Claim \ref{claim:derham}, we get:
	\begin{align*}
		\coH^{n}&\left(\left(\C[{\mu_0}^{-1}(0)] \otimes_{\gr^{(2)} \Univ_0} \gr^{(2)} W\right) \otimes_{\C} \Alt^{\bullet}(\m_0)^*\right)  \\
		& \quad \cong  \left\{\begin{array}{ll}
			\gr^{(2)} W & \text{if $n = 0$} \\
			\{0\} & \text{if $n \neq 0$},
		\end{array}\right.
	\end{align*}
	because $M_0$ is unipotent.
	
	As previously, the spectral sequence is convergent and collapses, which gives a natural isomorphism
	$$ \gr^{(2)}_m \coH^0\left(\left(Q_0 \otimes_{\Univ_0} W\right) \otimes_{\C} \Alt^{\bullet}(\m_0)^*\right) \cong \coH^0\left(\gr^{(2)}_m \left(Q_0 \otimes_{\C} \Alt^{\bullet}(\m_0)^*\right)\right) = \gr^{(2)}_m W $$
	for all $m \in \Z$. Moreover, the following triangle commutes.
	$$ \begin{tikzcd}
		\gr^{(2)}_{\bullet} W \arrow[rr, equal]  \arrow[rd, "\gr \Upsilon"'] & & \gr^{(2)}_{\bullet} W \\
		& \gr^{(2)}_{\bullet} \coH^0\left(\left(Q_0 \otimes_{\Univ_0} W\right)  \otimes_{\C}\Alt^{\bullet} \m_0 \right)  \arrow[ru, "\sim"'] & 
	\end{tikzcd} $$
	
	Finally we can deduce that $\gr \Upsilon$ is an isomorphism and so is $\Upsilon$.
	
\end{proof}

As in \cite{gan2002quantization}, we can state the following useful lemma.

\begin{lemma} \label{lemma:skryabin}
	For any $W \in \whcat_0$, if $\Whi_0(W) = \{0\}$ then $W = \{0\}$.
\end{lemma}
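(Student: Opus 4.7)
The plan is to prove the contrapositive: if $W$ is a nonzero object of $\whcat_0$, then $\Whi_0(W) \neq \{0\}$. Consider the twisted action $\tau$ of $\m_0$ on $W$ defined by $\tau(y) v \defeq (y - \chi_2(y)) \cdot v$. This is a well-defined Lie algebra action because $\chi_2$ restricts to a character on $\m_0$: since $\m_0 \subseteq \m_2$ and $\chi_2$ is a character on $\m_2$, one has $\chi_2([\m_0, \m_0]) \subseteq \chi_2([\m_2, \m_2]) = \{0\}$. By the very definition of $\whcat_0$, the twisted action $\tau$ is locally nilpotent on $W$, and $\Whi_0(W) = W^{\tau(\m_0)}$. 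Moreover, $\m_0$ is itself a nilpotent Lie algebra, being a subalgebra of the nilpotent algebra $\m_2$.

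The main step is the following classical fact: \emph{if $\mathfrak{n}$ is a finite-dimensional nilpotent Lie algebra acting locally nilpotently on a nonzero vector space $V$, then $V^{\mathfrak{n}} \neq \{0\}$.} I would prove this by induction on the nilpotency class of $\mathfrak{n}$. The abelian base case proceeds by choosing a basis $y_1, \dots, y_k$ of $\mathfrak{n}$ and iteratively taking kernels: since $y_1$ is locally nilpotent on the nonzero space $V$, its kernel $V_1$ is nonzero; because $\mathfrak{n}$ is abelian, $V_1$ is preserved by every $y_j$, and repeating the process with $y_2, \dots, y_k$ produces after finitely many steps a nonzero subspace annihilated by all of $\mathfrak{n}$. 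For the inductive step, the center $\mathfrak{z}(\mathfrak{n})$ is nonzero by nilpotency, and the abelian case yields $V^{\mathfrak{z}(\mathfrak{n})} \neq \{0\}$. This subspace is $\mathfrak{n}$-stable because $\mathfrak{z}(\mathfrak{n})$ is central, and the induced action of $\mathfrak{n}/\mathfrak{z}(\mathfrak{n})$ on it is again locally nilpotent. Applying the induction hypothesis to this strictly lower nilpotency class yields $V^{\mathfrak{n}} \neq \{0\}$.

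Applying this fact to $\mathfrak{n} = \m_0$ acting via $\tau$ on the nonzero module $W$ immediately gives $\Whi_0(W) \neq \{0\}$, which contradicts the hypothesis; hence $W = \{0\}$. The argument does not even use the finite-generation assumption built into $\whcat_0$. The only subtleties to verify are that $\chi_2|_{\m_0}$ is a character of $\m_0$ and that $\m_0$ is nilpotent, both of which are immediate from the hypotheses of Main Theorem~\ref{main:freeness}. An alternative route would mimic the Kazhdan filtration and spectral sequence arguments used in the proofs of Propositions~\ref{proposition:green-diagram} and~\ref{proposition:equivalence-1}, passing to the associated graded $\gr^{(2)} W$ as a module over $\gr^{(2)} \Univ(\g, f_1) \cong \Poi(\g, f_1)$; however, this machinery is heavier than needed for the present statement, so the direct Lie-theoretic argument above seems preferable.
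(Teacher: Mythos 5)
Your proof is correct, and it is essentially the argument the paper relies on: the lemma is stated without proof, deferring to Gan--Ginzburg, whose justification is exactly this Engel-type fact that a finite-dimensional nilpotent Lie algebra (here $\m_0$, acting through the twisted action $y \mapsto y - \chi_2(y)$, which is a genuine Lie algebra action since $\chi_2|_{\m_0}$ is a character) acting locally nilpotently on a nonzero module has nonzero invariants, i.e.\ $\Whi_0(W) \neq \{0\}$. Your observation that finite generation is not needed is also accurate.
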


Now we can establish a second natural isomorphism.

\begin{proposition} \label{proposition:equivalence-2}
	For any $V \in \whcat_0$, the following map is a $\Univ(\g, f_1)$-isomorphism:
	\begin{alignat*}{2}
		\Theta :  & \ & Q_0 \otimes_{\Univ_0} \Whi_0(V) & \longrightarrow V \\
		& & \left(X \bmod \widehat{I_{0}}\right) \otimes v & \longmapsto X \cdot v.
	\end{alignat*}
\end{proposition}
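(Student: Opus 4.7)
The plan is to adapt the Kazhdan-filtered spectral sequence strategy used in Proposition \ref{proposition:equivalence-1}. First I would check that $\Theta$ is well-defined: for $v \in \Whi_0(V)$ and $y \in \m_0$, the relation $(y - \chi_2(y)) \cdot v = 0$ ensures that any lift of $X \bmod \widehat{I_0}$ to $X \in \Univ(\g, f_1)$ gives the same value $X \cdot v$, while the $(\Univ(\g, f_1), \Univ_0)$-bimodule structure on $Q_0$ forces compatibility with $\otimes_{\Univ_0}$. The map $\Theta$ is $\Univ(\g, f_1)$-linear by construction, and $\Theta \circ \Upsilon$ is the tautological inclusion $\Whi_0(V) \hookrightarrow V$.

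To prove $\Theta$ is an isomorphism, use finite generation of $V$ to pick a finite-dimensional generator $V_0 \subseteq V$ and set $V^{(2)}_n \defeq \Univ^{(2)}_n(\g, f_1) \cdot V_0$; this is a non-negative exhaustive Kazhdan filtration. Induce filtrations on $\Whi_0(V)$ by intersection and on $Q_0 \otimes_{\Univ_0} \Whi_0(V)$ by tensor-product degree, so that $\Theta$ becomes filtered. Because $\gr^{(2)} Q_0 \cong \C[\mu_0^{-1}(0)]$ is free over $\gr^{(2)} \Univ_0 \cong \C[\Slice_2]$ (a consequence of Theorem \ref{theorem:freeness}), the associated graded of the tensor product behaves well:
\[ \gr^{(2)}\bigl(Q_0 \otimes_{\Univ_0} \Whi_0(V)\bigr) \cong \gr^{(2)} Q_0 \otimes_{\gr^{(2)} \Univ_0} \gr^{(2)} \Whi_0(V). \]

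The key step is to compute $\gr^{(2)} \Whi_0(V)$ via a Chevalley-Eilenberg spectral sequence applied to $(V \otimes_\C \Alt^\bullet(\m_0)^*)^{(2)}_\bullet$ with the twisted $\m_0$-action $\tau(y) = y - \chi_2(y)$, in exact parallel with Claim \ref{claim:derham}. The image of $y - \chi_2(y)$ in $\gr^{(2)} \Univ(\g, f_1) \cong \C[\Slice_1]$ is the moment-map coordinate $\xi \mapsto \mu_0(\xi)(y)$, so $\gr^{(2)} V$ is scheme-theoretically supported on $\mu_0^{-1}(0) \subseteq \Slice_1$. Combining this with the trivialization $\mu_0^{-1}(0) \cong M_0 \times \Slice_2$ of Theorem \ref{theorem:freeness} and the triviality of the algebraic de Rham cohomology of the unipotent variety $M_0$ in positive degrees, the spectral sequence collapses and identifies $\gr^{(2)} \Whi_0(V)$ with the restriction of $\gr^{(2)} V$ along $\Slice_2 \hookrightarrow \mu_0^{-1}(0)$. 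Under these identifications $\gr^{(2)} \Theta$ becomes the canonical spreading isomorphism $\C[M_0] \otimes_\C (\gr^{(2)} V)|_{\Slice_2} \cong \gr^{(2)} V$. Consequently $\Theta$ itself is an isomorphism by the standard criterion for filtered maps with non-negative exhaustive filtrations (\cite[Lemma 6.8]{kamnitzer2022hamiltonian}). The main obstacle is the spectral sequence collapse — specifically, justifying that the twisted $\m_0$-action on the associated graded lifts geometrically to a free $M_0$-action in the precise sense needed for the de Rham argument; this is where the full strength of Main Theorem \ref{main:freeness} is indispensable.
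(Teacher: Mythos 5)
There is a genuine gap at your ``key step.'' The filtered/spectral-sequence machinery you want to run on $V$ itself only works for the \emph{induced} modules $Q_0 \otimes_{\Univ_0} W$: in Proposition \ref{proposition:equivalence-1} the collapse of the spectral sequence rests on the isomorphism $\gr^{(2)}(Q_0 \otimes_{\Univ_0} W) \cong \C[M_0] \otimes_{\C} \gr^{(2)} W$, i.e.\ on freeness over $\C[M_0]$ coming from Theorem \ref{theorem:freeness}, and it is this freeness that kills $\coH^{n}$ for $n>0$. For an arbitrary finitely generated $V \in \whcat_0$ with the good filtration $V^{(2)}_n = \Univ^{(2)}_n(\g,f_1)\cdot V_0$, local nilpotence of the operators $y - \chi_2(y)$ does \emph{not} imply that their symbols annihilate $\gr^{(2)} V$: the filtration pieces are infinite dimensional, for $y$ of Kazhdan degree $0$ one only gets local nilpotence of the induced endomorphism of $\gr^{(2)}V$, and for $y$ of negative Kazhdan degree one gets nothing at all. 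So the asserted scheme-theoretic support of $\gr^{(2)}V$ on $\mu_0^{-1}(0)$ is unjustified; and even set-theoretic support would not be enough, because collapsing the spectral sequence requires $\coH^{n}\bigl(\gr^{(2)}_m(V \otimes_{\C} \Alt^{\bullet}(\m_0)^*)\bigr) = 0$ for $n>0$, which amounts to (twisted) $\m_0$-cohomology vanishing, essentially freeness of $\gr^{(2)}V$ over $\C[M_0]$ --- i.e.\ essentially the statement you are trying to prove. Consequently the identification of $\gr^{(2)}\Whi_0(V)$ with a restriction to $\Slice_2$, and the claim that $\gr^{(2)}\Theta$ is a ``spreading isomorphism,'' are not established, and the appeal to the filtered-map criterion has nothing to feed on.

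The paper's proof avoids filtering $V$ altogether and is purely homological. Injectivity: $V' = \Ker\Theta$ lies in $\whcat_0$, and Proposition \ref{proposition:equivalence-1} identifies $\Whi_0(Q_0 \otimes_{\Univ_0}\Whi_0(V))$ with $\{1\otimes w ~|~ w \in \Whi_0(V)\}$, on which $\Theta$ is injective; hence $\Whi_0(V')=\{0\}$ and Lemma \ref{lemma:skryabin} gives $V'=\{0\}$. Surjectivity: apply Chevalley--Eilenberg cohomology to the short exact sequence with cokernel $V''$; the vanishing $\coH^{n}\bigl((Q_0\otimes_{\Univ_0}\Whi_0(V))\otimes_{\C}\Alt^{\bullet}(\m_0)^*\bigr)=\{0\}$ for $n>0$ --- proved by the spectral sequence argument, legitimately, because the module is induced --- makes the long exact sequence degenerate, so $\Whi_0(V'')=\{0\}$ and Lemma \ref{lemma:skryabin} gives $V''=\{0\}$. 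If you want a direct argument for arbitrary $V$, you would need a filtration adapted to the Whittaker condition rather than the Kazhdan good filtration; as written, your outline does not close.
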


\begin{proof}
	By definition of $\Whi_0(V)$, the map $\Theta_V$ is clearly well-defined. Let us consider the kernel
	$$ V' \defeq \Ker \Theta \in \whcat_0. $$
	It is straightforward that:
	$$ \Whi_0(V') = V' \cap \Whi_0(Q_0 \otimes_{\Univ_0} \Whi_0(V)) = V' \cap \{1 \otimes w ~ : ~ w \in \Whi_0(V)\} $$
	where the last equality is given by Proposition \ref{proposition:equivalence-1}. Then we have $\Whi_0(V') = \{0\} $ because of the definition of $\Theta$. Using Lemma \ref{lemma:skryabin}, we conclude that $V' = \{0\}$: $\Theta$ is injective.
	
	To prove the surjectivity we consider the short exact sequence
	$$ \{0\} \longrightarrow Q_0 \otimes_{\Univ_0} \Whi_0(V) \overset{\Theta}{\longrightarrow} V \longrightarrow V'' \longrightarrow \{0\}, $$
	where $V'' \defeq V / \Theta( Q_0 \otimes_{\Univ_0} \Whi_0(V))$ is the cokernel of $\Theta$.
	It consists of $\m_0$-equivariant morphisms, so we can apply the Chevalley-Eilenberg cohomology functor, which leads to a long exact sequence. According to the proof of Proposition~\ref{proposition:equivalence-1}, for $n > 0$:
	\begin{multline*}
		\gr^{(2)}_{\bullet} \coH^n\left(\left(Q_0 \otimes_{\Univ_0} W\right)  \otimes_{\C}\Alt^{\bullet} \m_0 \right) \\ \cong \coH^{n}\left(\left(\left(\gr^{\Kaz^2} Q_0 \otimes_{\gr^{\Kaz^2} \Univ_0} \gr^{\Kaz^2} W\right) \otimes_{\C} \Alt^{\bullet}(\m_0)^*\right)^{(2)}_{\bullet}\right) = \{0\},
	\end{multline*}  
	so one has
	$$ \coH^n\left(\left(Q_0 \otimes_{\Univ_0} W\right)  \otimes_{\C}\Alt^{\bullet} \m_0 \right) = \{0\}. $$
	
	So the long exact sequence degenerates to the following short exact sequence!
	$$ \{0\} \longrightarrow \Whi_0(Q_0 \otimes_{\Univ_0} \Whi_0(V)) \overset{\Whi_0(\Theta)}{\longrightarrow} \Whi_0(V) \longrightarrow \Whi_0(V'') \longrightarrow \{0\}. $$
	The map $\Whi_0(\Theta)$ is given by
	$$ \Whi_0(Q_0 \otimes_{\Univ_0} \Whi_0(V)) \ni 1 \otimes x \longmapsto x, $$
	hence it is bijective and its cokernel $\Whi_0(V'')$ is $\{0\}$. Hence, Lemma \ref{lemma:skryabin} implies that $V'' = \{0\}$.
	
	Finally, the kernel and cokernel of $\Theta$ are trivial, so $\Theta$ is an isomorphism.	
\end{proof}

Propositions \ref{proposition:equivalence-1} and \ref{proposition:equivalence-2} provide the natural isomorphisms for the equivalence of category $\whcat_0 \simeq \module{\Univ(\g, f_2)}$ stated in Theorem \ref{theorem:skryabin-stage}. To conclude, let us prove the last part of the statement.

By a direct checking, the image of $\whcat(\g, f_2, \m_2)$ by the functor $\Whi_1$ lies in $\whcat_{0}$. Take $W \in \whcat_0 \subseteq \module{\Univ(\g, f_1)}$. We consider $\Ind_1(W) = Q_1 \otimes_{\Univ(\g, f_1)} W$. We already know that $\Ind(W) \in \whcat(f, \g, f_1)$, and for all $y \in \m_0$ and $X \otimes v \otimes \Ind_1(W)$ one has:
$$ (y - \chi_2(y)) \cdot (X \otimes v) = (\ad(y) X) \otimes v + X \otimes ((y - \chi_2(y)) \cdot v). $$
for $n > 0$ large enough,
$$ \ad^n(y) X = 0 \quad \text{and} \quad (y - \chi_2(y))^n \cdot v = 0, $$
hence $(y - \chi_2(y))^n \cdot (X \otimes v) = 0$. Therefore $\Ind_1(W) \in \whcat(f, \g, f_2)$. We have an equivalence since the restriction is well-defined and it is clear that $ \Whi_2 = \Whi_0 \circ \Whi_1. $ This achieves the proof.

\bibliographystyle{plain}
\bibliography{sn-bibliography}

\end{document}